\newtheorem{theorem}{Theorem}[section]
\newtheorem{definition}[theorem]{Definition}
\newtheorem{lemma}[theorem]{Lemma}
\newtheorem{proposition}[theorem]{Proposition}
\newtheorem{corollary}[theorem]{Corollary}
\newtheorem{remark}[theorem]{Remark}
\newtheorem{example}[theorem]{Example}
\newtheorem{examples}[theorem]{Examples}
\newcommand{\oo}{{\mathbb{O}}}
\newcommand{\hh}{{\mathbb{H}}}
\newcommand{\cc}{{\mathbb{C}}}
\newcommand{\rr}{{\mathbb{R}}}
\newcommand{\zz}{{\mathbb{Z}}}
\newcommand{\nn}{{\mathbb{N}}}
\newcommand{\D}{\mathbb{D}}
\newcommand{\B}{\mathbb{B}}
\newcommand{\s}{{\mathbb{S}}}
\newcommand{\sr}{\mathcal{SR}}
\newcommand{\I}{\mathcal{I}}
\newcommand\vs[1]{{#1}_s^\circ}
\newcommand{\punto}{\bullet}
\newcommand{\stx}{\mathscr{S}}
\newcommand{\sto}{\mathrm{u}}
\newcommand{\Sto}{\mathrm{U}}
\newcommand\re{\operatorname{Re}}
\newcommand\im{\operatorname{Im}}
\newcommand\lra{\longrightarrow}
\newcommand{\ui}{\imath}
\newcommand{\OO}{\Omega}
\newcommand{\ord}{\mathrm{ord}}
\newcommand{\mr}{\mathrm}
\newcommand{\mc}{\mathcal}
\title{\bf Singularities of slice regular functions over real alternative $^*$-algebras}
\author{Riccardo Ghiloni and Alessandro Perotti \\ 
\small Dipartimento di Matematica, Universit\`a di Trento\\ 
\small Via Sommarive 14, I-38123 Povo Trento, Italy\\
\small riccardo.ghiloni@unitn.it, alessandro.perotti@unitn.it\\
\and
Caterina Stoppato
\\
\small Dipartimento di Matematica e Informatica ``U. Dini'', Universit\`a di Firenze \\
\small Viale Morgagni 67/A, I-50134 Firenze, Italy\\
\small stoppato@math.unifi.it}
\date{  }
\begin{document}

\maketitle


\begin{abstract}
The main goal of this work is classifying the singularities of \emph{slice regular functions} over a real alternative $^*$-algebra $A$. This function theory has been introduced in 2011 as a higher-dimensional generalization of the classical theory of holomorphic complex functions, of the theory of slice regular quaternionic functions launched by Gentili and Struppa in 2006 and of the theory of slice monogenic functions constructed by Colombo, Sabadini and Struppa since 2009. Along with this generalization step, the larger class of \emph{slice functions} over $A$ has been defined. We introduce here a new type of series expansion near each singularity of a slice regular function. This instrument, which is new even in the quaternionic case, leads to a complete classification of singularities. This classification also relies on some recent developments of the theory, concerning the algebraic structure and the zero sets of slice functions. Peculiar phenomena arise, which were not present in the complex or quaternionic case, and they are studied by means of new results on the topology of the zero sets of slice functions. The analogs of meromorphic functions, called \emph{(slice) semiregular} functions, are introduced and studied.
\end{abstract}


\section{Introduction}

In the theory of holomorphic and meromorphic functions of one complex variable, a general principle  states that such functions are in some sense determined by their zeros and singularities.
For this reason, any function theory whose aim is to generalize the classical theory of holomorphic functions must deal with zeros and singularities and must investigate their properties.

This is the case with the theory of \emph{slice regular} functions developed over the last ten years. It has been introduced in \cite{cras,advances} for quaternion-valued functions of a quaternionic variable and in \cite{rocky} for octonionic functions. The article \cite{clifford} considered functions over the Clifford algebra $C\ell_{0,3}=\rr_3$ and \cite{israel} treated the case of functions from $\rr^{n+1}$ to the Clifford algebra $C\ell_{0,n}=\rr_n$, defining the notion of \emph{slice monogenic} function. All these theories have been largely developed in subsequent literature: see the monographs \cite{librodaniele2,librospringer} and references therein. Finally, \cite{perotti} introduced a new approach that is valid on a large class of real alternative $^*$-algebras and comprises the aforementioned algebras.

While the present work is set on general alternative $^*$-algebras, for which we will give detailed preliminaries in Section~\ref{sec:preliminaries}, we can introduce the actors of the play by briefly overviewing the quaternionic case. Let $\s$ be the 2-sphere of  square roots of $-1$ in the quaternionic space $\hh$ and, for each $J \in \s$, let $\cc_J\simeq\cc$ be the commutative subalgebra of $\hh$ generated by $J$.
Let $\OO$ be an open subset of the quaternionic space $\hh$. A differentiable function $f:\OO\rightarrow\hh$ is called (Cullen or) \emph{slice regular} if, for each $J\in\s$, the restriction of $f$ to $\OO\cap\cc_J$ is holomorphic with respect to the complex structure defined by left multiplication by $J$. 

The quaternionic space $\hh$ can be decomposed as
$\hh=\bigcup_{J \in \s}\cc_J$, with  $\cc_J\cap \cc_K=\rr$ for every $J,K \in \s$ with $J \neq \pm K$.
Thanks to this ``slice'' nature of $\hh$, every slice regular function $f$ can be lifted to a unique holomorphic ``stem'' function $F:\rr\otimes\cc\rightarrow\hh\otimes\cc$, satisfying the relation 
\begin{equation}\label{lifting}
f(\phi_J(z))=\phi_J(F(z))
\end{equation}
 for each $z\in\rr\otimes\cc\simeq\cc$ and for all $J \in \s$. Here $\phi_J$ denotes the mapping sending $a+b\sqrt{-1}\in\hh\otimes\cc$ to $a+Jb\in\hh$.

This property is grounds for a generalized construction, with another alternative $^*$-algebra $A$ instead of $\hh$ and with an (appropriately defined) real algebraic subset $\s_A$ of $A$ instead of $\s$. An $A$-valued function $f$ that can be lifted to a function $F:\rr\otimes\cc\rightarrow A\otimes\cc$ by means of formula~\eqref{lifting} for all $J \in \s_A$ is called a \emph{slice function}. If, moreover, the lifted function $F$ is holomorphic, then $f$ is termed \emph{slice regular}. The effectiveness of this approach can be appreciated by looking at some applications, such as the Cauchy integral formula for slice functions \cite[Theorem~27]{perotti}, the definition of the (slice) functional calculus for normal operators in quaternionic Hilbert spaces \cite{QSFCalculus}, or the notion of spherical sectorial operators \cite{semigroups}.

The lifting property allows to endow slice functions with a rich algebraic structure, induced by that of $A\otimes\cc$. In particular, slice functions over $A$ form an alternative $^*$-algebra themselves, which is studied, along with the subalgebra of slice regular functions, in the recent work \cite{gpsalgebra}. As in the complex case, there is a rich interplay between the algebraic structure and the properties of the zeros. This study is applied and continued in the present paper, which treats singularities of slice regular functions.

After setting the grounds to work in Sections~\ref{sec:preliminaries} and~\ref{sec:series}, we introduce in Section~\ref{sec:power-laurent} a notion of Laurent series that generalizes~\cite{powerseries,expansionsalgebras,singularities}. The expansion into Laurent series allows to give a first classification of the singularities of slice regular functions: in Section~\ref{sec:types} we define \emph{removable singularities}, \emph{poles} and \emph{essential singularities}, as well as a notion of \emph{order}. However, there are some relevant differences with respect to the complex case. For instance, a pole of order $0$ is not necessarily a removable singularity for $f$. This is because the set of convergence of the Laurent expansion at a point $y$, completed with $y$ itself, is not always a neighborhood of $y$ within the domain of $f$.

To overcome this difficulty, in Sections~\ref{sec:spherical-laurent} and~\ref{sec:spherical-laurent-expansions} we construct a finer Laurent-type expansion, which is new even for the quaternionic case: the \emph{spherical Laurent expansion}. If $y=\alpha+\beta J$ with $\alpha,\beta \in \rr$ and $J \in \s_A$, we call the set $\s_y:=\alpha+\beta \s_A$ the \emph{sphere} through $y$. The spherical Laurent expansion at $y$ is valid in an open subset of the domain of definition of the function that is \emph{circular} (i.e., axially symmetric with respect to the real axis). 
The relation between the spherical Laurent expansion at $y$ and the Laurent expansions at $y$ and at the conjugate point $y^c$ is studied in Section~\ref{sec:relation} and then exploited in Section~\ref{sec:classification} to provide a complete classification of singularities. It turns out that the badness of a singularity is determined by its \emph{spherical order}, defined in terms of the spherical Laurent expansion. More precisely, we prove that, on each sphere having spherical order $2m$, the order is $m$ at each point, apart from a set of exceptions which must have lesser order. In particular, a singularity is removable if, and only if its spherical order is $0$. The study of how the order of a pole can vary over a sphere is completed in Section~\ref{sec:order}, by means of a new result on the topology of the zero sets of slice functions.

In the final Section~\ref{sec:algebra}, we introduce the analogs of meromorphic functions, called \emph{(slice) semiregular} functions. By means of the spherical Laurent expansions, we prove that semiregular functions over a circular domain form a real alternative $^*$-algebra. The so-called \emph{tame} elements of this algebra admit multiplicative inverses within the algebra.

\vspace{.5em}
{\bf Acknowledgements.} This work is supported by GNSAGA of INdAM and by the grants FIRB ``Differential Geometry and Geometric Function Theory" and PRIN ``Variet\`a reali e complesse: geometria, topologia e analisi armonica" of the Italian Ministry of Education.\\
We wish to thank the anonymous referee for her/his precious suggestions, which helped us improving our presentation.
\vspace{.5em}


\section{Assumptions and preliminaries}\label{sec:preliminaries}

In this section, we make some assumptions that will hold throughout the paper. Along with our assumptions, we recall the definition of alternative $^*$-algebra $A$ over the real field $\rr$, some of their properties and some finite-dimensional examples. Furthermore, we overview some material from~\cite{perotti,expansionsalgebras,gpsalgebra}; namely, the definition of two special classes of $A$-valued functions and some of their properties.

\vspace{.5em}

{\bf Assumption (A).} \emph{Let $A$ be an \emph{alternative algebra} over $\rr$; that is, a real vector space endowed with a bilinear multiplicative operation such that the associator $(x,y,z)=(xy)z-x(yz)$ of three elements of $A$ is an alternating function.}

\vspace{.5em}

The alternating property is equivalent to requiring that $(x,x,y) = 0 = (y,x,x)$ for all $x,y \in A$, which is automatically true if $A$ is associative. Another important property of alternative algebras is described by the following result of E. Artin (cf.~\cite[Theorem 3.1]{schafer}): the subalgebra generated by any two elements of $A$ is associative.

\vspace{.5em}

{\bf Assumption (B).} \emph{All algebras and subalgebras are assumed to be \emph{unitary}; that is, to have a multiplicative neutral element $1\neq0$. The field $\rr$ of real numbers is identified with the subalgebra generated by $1$.}

\vspace{.5em}

With this notation, $\rr$ is always included in the \emph{nucleus} $\mathfrak{N}(A):=\{r \in A\, |\, (r,x,y)=0\ \forall\, x,y \in A\}$ and in the \emph{center} $\mathfrak{C}(A):=\{r \in \mathfrak{N}(A)\, |\, rx=xr\ \forall\, x\in A\}$ of the algebra $A$. We refer to~\cite{schafer} for further details on alternative algebras.

\vspace{.5em}

{\bf Assumption (C).} \emph{$A$ is assumed to be a $^*$-algebra. In other words, it is assumed to be equipped with a \emph{$^*$-involution} $x \mapsto x^c$; that is, a (real) linear transformation of $A$ with the following properties: $(x^c)^c=x$ for every $x \in A$, $(xy)^c=y^cx^c$ for every $x,y \in A$ and $x^c=x$ for every $x \in \rr$.}

\vspace{.5em}

For every $x \in A$, the \emph{trace} $t(x)$ of $x$ and the (squared) \emph{norm} $n(x)$ of $x$ are defined as
\begin{equation}\label{traceandnorm}
t(x):=x+x^c \quad \text{and} \quad n(x):=xx^c.
\end{equation}
If we set
\begin{equation} \label{eq:s_A}
\s_A=\{x \in A \, | \, t(x)=0, n(x)=1\},
\end{equation}
then the subalgebra $\cc_J$ generated by any $J \in \s_A$ is isomorphic to the complex field. Observe that $t(x)=0$ consist of real linear equations and 
$n(x)=1$ consists of real quadratic equations. Now, if we define the \emph{quadratic cone} of $A$ as
\[
Q_A:=\rr \cup \big\{x \in A \, \big| \, t(x) \in \rr ,n(x) \in \rr, t(x)^2<4n(x)\big\}
\]
then
\begin{equation} \label{eq:slice}
\textstyle
\text{$Q_A=\bigcup_{J \in \s_A}\cc_J$.}
\end{equation}
Moreover, $\cc_I \cap \cc_J=\rr$ for every $I,J \in \s_A$ with $I \neq \pm J$. In other words, every element $x$ of $Q_A \setminus \rr$ can be written as follows: $x=\alpha+\beta J$, where $\alpha \in \rr$ is uniquely determined by $x$, while $\beta \in \rr$ and $J \in \s_A$ are uniquely determined by $x$, but only up to sign. If $x \in \rr$, then $\alpha=x$, $\beta=0$ and $J$ can be chosen arbitrarily in $\s_A$. Therefore, it makes sense to define the \emph{real part} $\re(x)$ and the \emph{imaginary part} $\im(x)$ by setting $\re(x):=t(x)/2=(x+x^c)/2$ and $\im(x):=x-\re(x)=(x-x^c)/2$. Finally, for all $J \in \s_A$, we have that $J^c=-J$ so that the isomorphism between $\cc_J$ and $\cc$ is also a $^*$-algebra isomorphism. In particular, if $x=\alpha+\beta J$ for some $\alpha,\beta \in \rr$ and $J \in \s_A$, then $x^c=\alpha-\beta J$ and $n(x) = n(x^c)= \alpha^2+\beta^2$. Hence, for such an $x$ we may write $|x|:=\sqrt{n(x)}=\sqrt{\alpha^2+\beta^2}$ just as we would for a complex number. We refer the reader to~\cite[\S 2]{perotti} for a proof of the preceding assertions.

The function theory we will refer to in this paper involves $A$-valued functions with domains contained in the quadratic cone $Q_A$. Therefore, it is natural to take the next assumption.

\vspace{.5em}

{\bf Assumption (D).} \emph{The real dimension of $A$ is assumed to be finite. The quadratic cone $Q_A$ is assumed to strictly contain $\rr$; equivalently, the real algebraic subset $\s_A$ of $A$ is assumed not to be empty.}

\vspace{.5em}

Since left multiplication by an element of $\s_A$ induces a complex structure on $A$, we infer that the real dimension of $A$ equals $2h+2$ for some non-negative integer $h$. Then the real alternative *-algebra $A$ has the following useful splitting property, \cite[Lemma 2.3]{expansionsalgebras}: for each $J \in \s_A$, there exist $J_1,\ldots,J_h \in A$ such that $\{1,J,J_1,JJ_1,\ldots,J_h,JJ_h\}$ is a real vector basis of $A$, called a \emph{splitting basis} of $A$ associated with $J$. 

\vspace{.5em}

{\bf Assumption (E).} \emph{Let $A$ be equipped with its natural Euclidean topology and differentiable structure as a finite dimensional real vector space.}

\vspace{.5em}
The relative topology on each $\cc_J$ with $J \in \s_A$ clearly agrees with the topology determined by the natural identification between $\cc_J$ and $\cc$.
Given a subset $E$ of $\cc$, its \emph{circularization} $\OO_E$ is defined as the following subset of $Q_A$:
\[
\OO_E:=
\left\{x \in Q_A \, \big| \, \exists \alpha,\beta \in \rr, \exists J \in \s_A \mathrm{\ s.t.\ } x=\alpha+\beta J, \alpha+\beta i \in E\right \}.
\]
A subset of $Q_A$ is termed \emph{circular} if it equals $\OO_E$ for some $E\subseteq\cc$. For instance, given $x=\alpha+\beta J \in Q_A$ we have that
\[
\s_x:=\alpha+\beta \, \s_A=\{\alpha+\beta I \in Q_A \, | \, I \in \s_A\}
\]
is circular, as it is the circularization of the singleton $\{\alpha+i\beta\}\subseteq \cc$. We observe that $\s_x=\{x\}$ if $x \in \rr$. On the other hand, for $x \in Q_A \setminus \rr$, the set $\s_x$ is obtained by real translation and dilation from $\s_A$. Such sets are called \emph{spheres}, because the theory has been first developed in the special case of division algebras $A=\cc,\hh,\oo$ where they are genuine Euclidean spheres (see the forthcoming Examples~\ref{ex:divisionalgebras}).

The class of functions we consider was defined in~\cite{perotti} by means of the complexified algebra $A_{\cc}=A \otimes_{\rr} \cc=\{x+\ui y \, | \, x,y \in A\}$ of $A$, endowed with the following product:
\[
(x+\ui y)(x'+\ui y')=xx'-yy'+\ui (xy'+yx').
\]
The algebra $A_{\cc}$ is still alternative: the equality $(x+\ui y, x+\ui y, x'+\ui y') =0$ can be proven by direct computation, taking into account that $(x,y,x') = - (y,x,x')$ and  $(x,y,y') = - (y,x,y')$ because $A$ is alternative. Moreover, $A_{\cc}$ is associative, if and only if $A$ is. We shall denote by $\rr_\cc$ the real subalgebra $\rr+\ui\rr\simeq\cc$ of $A_\cc$, which is included in the center of $A_\cc$.
In addition to the complex conjugation $\overline{x+\ui y}=x-\ui y$, we may endow $A_\cc$ with a $^*$-involution $x+\ui y \mapsto (x+\ui y)^c := x^c+\ui y^c$, which makes it a $^*$-algebra.

\begin{definition} \label{def:slice-function}
Let $D$ be a nonempty subset of $\cc$, invariant under the complex conjugation $z=\alpha+i\beta \mapsto \overline{z}=\alpha-i\beta$. A function $F=F_1+\ui F_2:D \lra A_\cc$ with $A$-components $F_1$ and $F_2$ is called a \emph{stem function} on $D$ if $F(\overline{z})=\overline{F(z)}$ for every $z \in D$ or, equivalently, if $F_1(\overline{z})=F_1(z)$ and $F_2(\overline{z})=-F_2(z)$ for every $z \in D$.
If $\OO := \OO_D$ then a function $f:\OO \lra A$ is called a \emph{(left) slice function} if there exists a stem function  $F=F_1+\ui F_2:D \lra A_\cc$ such that, for all $z=\alpha+i\beta \in D$ (with $\alpha,\beta \in \rr$), for all $J \in \s_A$ and for $x=\alpha+\beta J$,
\[
f(x)=F_1(z)+JF_2(z),
\]
In this situation, we say that $f$ is induced by $F$ and we write $f=\I(F)$. If $F_1$ and $F_2$ are $\rr$-valued, then we say that the slice function $f$ is \emph{slice preserving}.
We denote by $\mc{S}(\OO)$ the real vector space of slice functions on $\OO$ and by $\mc{S}_\rr(\OO)$ the subspace of slice preserving functions.
\end{definition}

The terminology used in the definition is justified by the following property, valid whenever the cardinality of $\s_A$ is greater than $2$: the components $F_1$ and $F_2$ of a stem function $F$ are $\rr$-valued if and only if the slice function $f=\I(F)$ maps every ``slice'' $\OO_J:=\OO\cap\cc_J$ into $\cc_J$ (cf.~\cite[Proposition~10]{perotti}).

The algebraic structure of $\mc{S}(\OO)$ can be described as follows, see \cite[\S 2]{gpsalgebra}.

\begin{proposition}\label{prop:algebra}
The stem functions $D \to A_\cc$ form an alternative $^*$-algebra over $\rr$ with pointwise addition $(F+G)(z) = F(z)+G(z)$, multiplication $(FG)(z) = F(z)G(z)$ and conjugation $F^c(z) = F(z)^c$.
Besides the pointwise addition $(f,g) \mapsto f + g$, there exist unique operations of multiplication $(f,g) \mapsto f \cdot g$ and conjugation $f \mapsto f^c$ on the set $\mc{S}(\OO)$ of slice functions on $\OO:=\OO_D$ such that the mapping $\I$ is a $^*$-algebra isomorphism from the $^*$-algebra of stem functions on $D$ to $\mc{S}(\OO)$. The product $f\cdot g$ of two functions $f,g \in \mc{S}(\OO)$ is called \emph{slice product} of $f$ and $g$. 
\end{proposition}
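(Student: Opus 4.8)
The plan is to prove both assertions by transfer of structure. First I would establish that the space of all functions $D \to A_\cc$, equipped with the stated pointwise operations, is itself an alternative $^*$-algebra: since $A_\cc$ is an alternative $^*$-algebra (as observed above) and all the defining identities — bilinearity, the alternating property of the associator, and the three axioms of a $^*$-involution — are equalities between elements of $A_\cc$, they hold pointwise and hence pass verbatim to functions, the unit being the constant function $1$.

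Next I would show that the stem functions form a $^*$-subalgebra, i.e.\ that the subset $\{F \mid F(\overline{z}) = \overline{F(z)}\}$ is closed under addition, the pointwise product, and the conjugation $F^c(z) = F(z)^c$. The key algebraic facts are that the complex conjugation $x + \ui y \mapsto \overline{x + \ui y} = x - \ui y$ of $A_\cc$ is additive and multiplicative (which one checks directly from the product formula $(x+\ui y)(x'+\ui y') = xx' - yy' + \ui(xy'+yx')$), and that it commutes with the $^*$-involution $(x+\ui y)^c = x^c + \ui y^c$. Granting these, closure is immediate: for stem functions $F, G$ one has $(F+G)(\overline{z}) = \overline{F(z)} + \overline{G(z)} = \overline{(F+G)(z)}$, then $(FG)(\overline{z}) = \overline{F(z)}\,\overline{G(z)} = \overline{F(z)G(z)} = \overline{(FG)(z)}$, and finally $F^c(\overline{z}) = (\overline{F(z)})^c = \overline{F(z)^c} = \overline{F^c(z)}$. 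Thus stem functions inherit the alternative $^*$-algebra structure, which proves the first statement.

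For the second statement I would invoke the fact, established in~\cite{perotti}, that $\I$ is a bijective $\rr$-linear map from stem functions on $D$ to slice functions on $\OO = \OO_D$: surjectivity holds by the very definition of slice function, while injectivity follows because the components are recovered from $f$ via $F_1(z) = \tfrac12\big(f(x) + f(x^c)\big)$ and $JF_2(z) = \tfrac12\big(f(x) - f(x^c)\big)$ for $x = \alpha + \beta J$. I would then transport the structure along $\I$: setting $f \cdot g := \I\big(\I^{-1}(f)\,\I^{-1}(g)\big)$ and $f^c := \I\big((\I^{-1}(f))^c\big)$ makes $\I$ a $^*$-algebra isomorphism by construction (additivity being already guaranteed by linearity of $\I$), and hence $\mc{S}(\OO)$ becomes an alternative $^*$-algebra isomorphic to the stem functions. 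These operations are moreover forced by the requirement that $\I$ be such an isomorphism, since that requirement dictates $\I(F) \cdot \I(G) = \I(FG)$ and $\I(F)^c = \I(F^c)$, yielding uniqueness.

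The computations here are all routine, and I do not expect a serious obstacle. The only point deserving care is the closure of stem functions under the pointwise product, which rests on conjugation being multiplicative on the merely alternative (not necessarily associative) algebra $A_\cc$; but since this is a statement about a single product of two elements, alternativity poses no difficulty and the direct check suffices. It is worth emphasising, although it is not needed for the proof, that the resulting slice product $f \cdot g$ does not in general coincide with the pointwise product of $f$ and $g$ as $A$-valued functions — it is the pointwise product \emph{at the level of stem functions} that is transported.
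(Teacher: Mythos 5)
Your proposal is correct and takes essentially the same approach as the source: the paper itself states this proposition without proof, citing \cite[\S 2]{gpsalgebra}, and the argument there is precisely your transport-of-structure — verify that the pointwise operations make the $A_\cc$-valued functions an alternative $^*$-algebra, that the stem functions form a $^*$-subalgebra (using that complex conjugation of $A_\cc$ is multiplicative and commutes with $x+\ui y\mapsto x^c+\ui y^c$), and then push the operations through the bijection $\I$, which also forces uniqueness. The only micro-detail left implicit is that recovering $F_2(z)$ from $JF_2(z)$ uses left alternativity, $J(JF_2(z))=J^2F_2(z)=-F_2(z)$, together with the fact that the stem condition forces $F_2\equiv 0$ on $D\cap\rr$; both are immediate.
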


In general, $(f \cdot g)(x) \neq f(x)g(x)$. To compute $f \cdot g$ with $f=\I(F),g=\I(G) \in \mc{S}(\OO)$, one needs instead to compute $FG$ and then $f \cdot g=\I(FG)$. Similarly, $f=\I(F)$ implies $f^c=\I(F^c)$. The \emph{normal function} of $f$ in $\mc{S}(\OO)$ is defined as
\[
N(f)=f \cdot f^c=\I(FF^c).
\]
If $f$ is slice preserving, then we have that $f \cdot g=fg=g \cdot f$, that $f=f^c$ and that $N(f)=f^2$. 

\begin{example}\label{ex:Delta}
For fixed $y \in Q_A$, the binomial $f(x) := x-y$ is a slice function induced by $F(\alpha+i\beta):=\alpha-y + \ui \beta$. The slice product $(f \cdot f)(x) = x^2-2xy+ y^2$ does not coincide with the pointwise square $f(x)^2 = x^2 -xy -yx +y^2$ if $y$ does not commute with every element of $Q_A$.
The conjugate function is $f^c(x) = x-y^c$ and its normal function $N(f)(x) = (x-y) \cdot (x-y^c)$ coincides with the slice preserving quadratic polynomial
\[
\Delta_y(x):=x^2-xt(y)+n(y).
\]
If $y' \in Q_A$, then $\Delta_{y'}=\Delta_y$ if and only if $\s_{y'}=\s_y$.
\end{example}

Let $\OO:=\OO_D$ and let $f:\OO \lra A$ be a slice function. Given $y=\alpha+\beta J$ and $z=\alpha+\beta K$ in $\OO$ for some $\alpha,\beta \in \rr$ and $J,K \in \s_A$ with $J-K$ invertible, as a direct consequence of the definition of slice function, the following representation formula holds for all $x=\alpha+\beta I$ with $I \in \s_A$:

\begin{equation}\label{rep1}
f(x)=(I-K)\left((J-K)^{-1}f(y)\right)-(I-J)\left((J-K)^{-1}f(z)\right).
\end{equation}
In particular, for $K=-J$,
\begin{equation}\label{rep2}
f(x)=\frac12\left(f(y)+f(y^c)\right)-\frac{I}2\left(J\left(f(y)-f(y^c)\right)\right).
\end{equation}

Suppose $f=\I(F) \in \mc{S}(\OO)$ with $F=F_1+\ui F_2$. It is useful to define a function $\vs f:\OO \lra A$, called \emph{spherical value} of $f$, and a function $f'_s:\OO \setminus \rr \lra A$, called \emph{spherical derivative} of $f$, by setting
\[
\vs f(x):=\frac{1}{2}(f(x)+f(x^c))
\quad \text{and} \quad
f'_s(x):=\frac{1}{2}\im(x)^{-1}(f(x)-f(x^c)).
\] 
The notations originally used for these functions were, $v_s f$ and $\partial_s f$, respectively. $\vs f$ and $f'_s$ are slice functions and they are constant on each sphere $\s_x\subseteq \OO$.

Within the class of slice functions, let us consider a special subclass having nice properties that recall those of holomorphic functions of a complex variable. We begin with the next remark, where we take advantage of the fact that, for $\OO:=\OO_D$ and for each $J \in \s_A$, the slice $\OO_J:=\OO\cap\cc_J$ is equivalent to $D$ under the natural identification between $\cc_J$ and $\cc$.

\begin{remark}
If $\OO:=\OO_D$ is an open subset of $Q_A$, then each slice $\OO_J=\OO\cap\cc_J$ with $J \in \s_A$ is open in the relative topology of $\cc_J$; therefore, $D$ itself is open in $\cc$. The continuous and $\mathscr{C}^1$ stem functions on $D$ form $^*$-subalgebras of the $^*$-algebra of stem functions. Their respective images through $\I$, which we may denote as $\mc{S}^0(\OO)$ and $\mc{S}^1(\OO)$, are $^*$-subalgebras of $\mc{S}(\OO)$.
\end{remark}

Now take $f = \I(F) \in \mc{S}^1(\OO)$. The derivative $\partial F/\partial \overline{z}:D \lra A_{\cc}$ with respect to $\overline{z}= \alpha - \beta\ui$, that is,
\[
\frac{\partial F}{\partial\overline{z}}:=\frac{1}{2}\left(\frac{\partial F}{\partial\alpha}+\ui\frac{\partial F}{\partial\beta}\right)
\]
and the analogous $\partial F/\partial z:D \lra A_{\cc}$ are still stem functions, which induce the slice functions $\partial f/\partial x^c:=\I(\partial F/\partial \overline{z})$ and  $\partial f/\partial x:=\I(\partial F/\partial z)$ on $\OO$.

\begin{definition} \label{def:slice-regularity} 
Let $\OO:=\OO_D$ be open in $Q_A$. A slice function $f \in \mc{S}^1(\OO)$ is called \emph{slice regular} if $\partial f/\partial x^c\equiv0$ in $\OO$. We denote by $\mc{SR}(\OO)$ the set of slice regular functions on $\OO$.
\end{definition}

Equivalently, the set $\mc{SR}(\OO)$ is the $^*$-subalgebra of $\mc{S}(\OO)$ induced via the mapping $\I$ by the $^*$-subalgebra of holomorphic stem functions on $D$. The most classic examples of slice regular functions are polynomials.

\begin{example}
Every polynomial of the form $\sum_{m=0}^n x^ma_m = a_0+x a_1 + \ldots + x^n a_n$ with coefficients $a_0, \ldots, a_n \in A$ is a slice regular function on the whole quadratic cone $Q_A$.
\end{example}

Slice regularity is naturally related to complex holomorphy in the following sense, see \cite[Lemma 2.4]{expansionsalgebras}.
    
\begin{lemma}\label{splitting} 
Let $\OO:=\OO_D$ be open in $Q_A$.
Let $J \in \s_A$ and let $\{1,J,J_1,JJ_1,\ldots,J_h,JJ_h\}$ be a splitting basis of $A$ associated with $J$. For $f \in \mc{S}^1(\OO)$, let $f_0,f_1,\ldots,f_h : \OO_J \to \cc_J$ be the $\mathscr{C}^1$ functions such that $f_{|_{\OO_J}}=\sum_{\ell=0}^h f_\ell J_{\ell}$, where $J_0:=1$. Then $f$ is slice regular if, and only if, for each $\ell \in \{0,1,\ldots,h\}$, $f_\ell$ is holomorphic from $\OO_J$ to $\cc_J$, both equipped with the complex structure associated with left multiplication by $J$.
\end{lemma}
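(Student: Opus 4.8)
The plan is to reduce slice regularity, which is a condition on the stem function $F$ over all of $\OO$, to a Cauchy--Riemann condition that can be read off the single slice $\OO_J$, and then to decompose that condition along the splitting basis. First I would recall that $\partial f/\partial x^c=\I(\partial F/\partial\overline z)$ and that $\I$ is injective (Proposition~\ref{prop:algebra}), so $f$ is slice regular precisely when $\partial F/\partial\overline z\equiv0$. The key preliminary point is that this vanishing is already detected on the single slice $\OO_J$: if $G=G_1+\ui G_2$ is a stem function and $\I(G)$ vanishes on $\OO_J$, then evaluating $G_1(z)+JG_2(z)=0$ at both $z$ and $\overline z$ and using the stem symmetry $G_1(\overline z)=G_1(z)$, $G_2(\overline z)=-G_2(z)$ forces $G_1\equiv G_2\equiv0$, since $J$ is invertible. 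Hence it suffices to compute $(\partial f/\partial x^c)|_{\OO_J}$.

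Next I would prove the central identity. On $\OO_J$, writing $x=\alpha+\beta J$ and $z=\alpha+i\beta$, one has $f(x)=F_1(z)+JF_2(z)$, and a direct calculation from $\partial F/\partial\overline z=\tfrac12(\partial_\alpha F+\ui\,\partial_\beta F)$ shows
\[
\Big(\frac{\partial f}{\partial x^c}\Big)\Big|_{\OO_J}(x)=\frac12\Big(\frac{\partial}{\partial\alpha}+J\frac{\partial}{\partial\beta}\Big)\big(f|_{\OO_J}\big)(x),
\]
where on the right $A$ is viewed with the complex structure given by left multiplication by $J$. The only nontrivial step is collapsing the term $J\big(J\,\partial_\beta F_2\big)$ to $-\partial_\beta F_2$, which is the instance $(J,J,\partial_\beta F_2)=0$ of the alternative law. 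In other words, the restriction of $\partial f/\partial x^c$ to the slice is exactly the intrinsic $\overline\partial$-operator $\overline\partial_J:=\tfrac12(\partial_\alpha+J\,\partial_\beta)$ applied to $f|_{\OO_J}$.

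Finally I would substitute $f|_{\OO_J}=\sum_{\ell=0}^h f_\ell J_\ell$ and apply $\overline\partial_J$ termwise. Here lies the main (and only genuinely delicate) obstacle, caused by non-associativity: I must move the constant basis vector $J_\ell$ out of $\overline\partial_J(f_\ell J_\ell)$, i.e. verify $J\big((\partial_\beta f_\ell)J_\ell\big)=(J\,\partial_\beta f_\ell)J_\ell$. Since $\partial_\beta f_\ell$ takes values in $\cc_J$, hence is of the form $a+bJ$ with $a,b\in\rr$, trilinearity of the associator reduces this to $(J,1,J_\ell)=0$, which holds because $1$ is the neutral element, and $(J,J,J_\ell)=0$, which is the alternative law. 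This yields $\overline\partial_J(f_\ell J_\ell)=(\overline\partial_J f_\ell)J_\ell$ with $\overline\partial_J f_\ell\in\cc_J$, so that
\[
\Big(\frac{\partial f}{\partial x^c}\Big)\Big|_{\OO_J}=\sum_{\ell=0}^h(\overline\partial_J f_\ell)\,J_\ell .
\]
Because $\{1,J_1,\dots,J_h\}$ is a $\cc_J$-basis of $A$, this sum vanishes identically on $\OO_J$ if and only if each $\overline\partial_J f_\ell\equiv0$, that is, if and only if each $f_\ell$ is holomorphic from $\OO_J$ to $\cc_J$ for the complex structure of left multiplication by $J$. Combining this with the first paragraph, where vanishing on $\OO_J$ was shown to be equivalent to $\partial f/\partial x^c\equiv0$ on all of $\OO$, gives both implications of the lemma.
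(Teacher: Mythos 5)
Your proof is correct. Note that the paper itself gives no proof of this lemma: it is quoted as \cite[Lemma 2.4]{expansionsalgebras}, so there is no in-paper argument to compare against. Your three steps are exactly the natural route (and essentially the one in the cited reference): injectivity of restriction to a single slice for slice functions (your stem-symmetry argument, which legitimately applies to $\partial f/\partial x^c$ because $\partial F/\partial\overline z$ is again a stem function), the identification $(\partial f/\partial x^c)|_{\OO_J}=\tfrac12(\partial_\alpha+J\partial_\beta)(f|_{\OO_J})$ via the alternative law $(J,J,\cdot)=0$, and the termwise decomposition along the left $\cc_J$-basis $\{J_0,\ldots,J_h\}$, where your associator computation $(J,a+bJ,J_\ell)=a(J,1,J_\ell)+b(J,J,J_\ell)=0$ correctly handles the only nonassociativity issue (one could equally invoke Artin's theorem here).
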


For future reference, let us make a further remark concerning the domains of our functions.
Let $\OO:=\OO_D$ with $D\subseteq \cc$ preserved by complex conjugation. If $\OO$ is open in $Q_A$, then $D$ can be decomposed into a disjoint union of open subsets of $\cc$, each of which either
\begin{enumerate}
\item intersects the real line $\rr$, is connected and preserved by complex conjugation; or
\item does not intersect $\rr$ and has two connected components, $D^+$ in the open upper half-plane $\cc^+$ and $D^-$ in the open lower half-plane $\cc^-$, switched by complex conjugation.
\end{enumerate}
Therefore, when $D$ is open in $\cc$, without loss of generality it falls within case 1 or case 2. In the former case, the resulting domain $\OO$ is called a \emph{slice domain} because each slice $\OO_J$ with $J \in \s_A$ is a domain in the complex analytic sense (more precisely, it is an open connected subset of $\cc_J$). In case 2, we will call $\OO$ a \emph{product domain} as it is homeomorphic to the product between the complex domain $D^+$ and the sphere $\s_A$. In such a case each slice $\OO_J$ has two connected components, namely $\OO_J^+ = \{\alpha + \beta J \in Q_A\,|\, \alpha + i \beta \in D^+\}$ and the analogous $\OO_J^-$.

In the sequel, every mention of a slice function on a set $\OO$ will automatically imply that $\OO:=\OO_D$ for some nonempty $D \subseteq \cc$, invariant under complex conjugation and not necessarily open.

We now recall a few results from \cite{gpsalgebra} concerning the existence of a multiplicative inverse for a slice function.

\begin{theorem}\label{invertibles_f}
A slice function $f$ admits a multiplicative inverse $f^{-\punto}$ if, and only if, $f^c$ does. If this is the case, then $(f^c)^{-\punto} = (f^{-\punto})^c$. Furthermore, $f$ admits a multiplicative inverse $f^{-\punto}$ if, and only if, both $N(f)$ and $N(f^c)$ do. If this is the case, then: 
\[
f^{-\punto} =f^c\cdot N(f)^{-\punto} = N(f^c)^{-\punto}\cdot f^c.
\]
Moreover, $N(f)^{-\punto} = (f^{-\punto})^c\cdot f^{-\punto} = N((f^{-\punto})^c)$.
\end{theorem}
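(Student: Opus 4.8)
The first assertion is purely formal and holds in any $^*$-algebra, so I would dispatch it first. If $f$ admits an inverse $u := f^{-\punto}$, so that $f \cdot u = u \cdot f = 1$, then applying the $^*$-involution and using $(f \cdot u)^c = u^c \cdot f^c$ together with $1^c = 1$ yields $u^c \cdot f^c = f^c \cdot u^c = 1$; hence $f^c$ is invertible with $(f^c)^{-\punto} = u^c = (f^{-\punto})^c$. The converse follows by exchanging the roles of $f$ and $f^c$ and recalling $(f^c)^c = f$.

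For the remaining assertions the plan is to transport the problem to the pointwise setting through the $^*$-algebra isomorphism $\I$ of Proposition~\ref{prop:algebra}. Writing $f = \I(F)$, invertibility of $f$ in $\mc{S}(\OO)$ is equivalent to invertibility of the stem function $F$ in the $^*$-algebra of stem functions, whose operations are pointwise; hence it is equivalent to $F(z)$ being invertible in $A_\cc$ for every $z \in D$. Indeed, if $F(z)$ is invertible for all $z$, then $z \mapsto F(z)^{-1}$ is again a stem function, because complex conjugation on $A_\cc$ is an $\rr$-algebra automorphism and therefore commutes with inversion, giving $F(\overline z)^{-1} = \overline{F(z)}^{\,-1} = \overline{F(z)^{-1}}$. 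Under $\I$, the normal functions $N(f)$ and $N(f^c) = f^c \cdot f$ correspond to the pointwise products $F F^c$ and $F^c F$. Thus everything reduces to the following claim in the finite-dimensional alternative $^*$-algebra $A_\cc$: \emph{for $w \in A_\cc$, the element $w$ is invertible if and only if $w w^c$ and $w^c w$ are both invertible, and then $w^{-1} = w^c (w w^c)^{-1} = (w^c w)^{-1} w^c$.}

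I would prove this finite-dimensional claim as follows. The direct implication follows from the first assertion applied in $A_\cc$ (so that $w^c$ is invertible) together with the fact that a product of invertible elements is invertible. For the converse, let $C'$ be the subalgebra of $A_\cc$ generated by $w$ and $w^c$: by Artin's theorem it is associative, and it is finite-dimensional by Assumption~(D). Since $p := w w^c$ and $q := w^c w$ lie in $C'$ and are invertible in $A_\cc$, finite-dimensionality forces $p^{-1}$ and $q^{-1}$ to lie in $C'$ as well. Working inside the associative algebra $C'$ one then computes $w \cdot (w^c p^{-1}) = (w w^c) p^{-1} = 1$ and $(q^{-1} w^c)\cdot w = q^{-1}(w^c w) = 1$, so $w$ has a right and a left inverse in $C'$, which must coincide; this yields invertibility of $w$ and the two formulas for $w^{-1}$.

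Transporting this claim back through $\I$ gives at once that $f$ is invertible if and only if $N(f)$ and $N(f^c)$ are, together with $f^{-\punto} = f^c \cdot N(f)^{-\punto} = N(f^c)^{-\punto} \cdot f^c$. Finally, the ``moreover'' identities follow from the same circle of ideas: from $N(f) = f \cdot f^c$ and the order-reversing behaviour of the inverse of a product (legitimate once one reduces to the associative subalgebra generated by $w = F(z)$ and $w^c$ pointwise) one gets $N(f)^{-\punto} = (f^c)^{-\punto} \cdot f^{-\punto}$, which equals $(f^{-\punto})^c \cdot f^{-\punto}$ by the first assertion; and $(f^{-\punto})^c \cdot f^{-\punto}$ is by definition $N\big((f^{-\punto})^c\big)$. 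The only genuine obstacle is the non-associativity of $\mc{S}(\OO)$: a priori $N(f)^{-\punto}$ need not associate with $f$ and $f^c$. This is precisely what the pointwise reduction to the finite-dimensional algebra $A_\cc$ circumvents, since there the two generators $w$, $w^c$ and all the relevant inverses live in a single associative Artin subalgebra.
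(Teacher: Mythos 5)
There is no in-paper proof to compare against: Theorem~\ref{invertibles_f} is recalled verbatim from \cite{gpsalgebra}, and the present paper never proves it. Judged on its own merits, your argument is correct and the strategy is sound: the purely formal first assertion; the transport of the remaining assertions through the $^*$-algebra isomorphism $\I$ of Proposition~\ref{prop:algebra} to a pointwise question about invertibility in the finite-dimensional alternative $^*$-algebra $A_\cc$; and the resolution of that pointwise question inside the associative subalgebra $C'$ generated by $w$ and $w^c$, where Artin's theorem and finite-dimensionality do the work. This yields a self-contained proof from first principles, which is exactly what one wants given that the paper outsources the statement.

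Three of your steps, however, silently use a nontrivial fact about alternative algebras that you should invoke explicitly: the \emph{inverse property} of invertible elements, i.e.\ if $xy=yx=1$ then $x^{-1}(xz)=z$ and $(zx)x^{-1}=z$ for every $z$ (a consequence of the Moufang identities, cf.~\cite{schafer}). (i) It is what makes two-sided inverses in $A_\cc$ \emph{unique}, so that the pointwise inverse $z \mapsto F(z)^{-1}$ is a well-defined function and your automorphism argument for the stem property ($F(\overline z)^{-1}=\overline{F(z)^{-1}}$) is legitimate. (ii) Your phrase ``finite-dimensionality forces $p^{-1},q^{-1}\in C'$'' hides an argument that needs it: left multiplication $L_p\colon C'\to C'$ is injective because $px=0$ implies $x=p^{-1}(px)=0$ (inverse property), hence surjective by finite dimension, producing a right inverse of $p$ in $C'$; a left inverse comes symmetrically from $R_p$, the two coincide by associativity of $C'$, and they equal $p^{-1}$ by uniqueness. (iii) In the easy direction, ``a product of invertible elements is invertible'' is not a formality in a nonassociative algebra; it again follows from the inverse property, or, in your setting, from the same $C'$ argument applied to $w^{-1}$ and $(w^c)^{-1}$, which lie in $C'$ since $C'$ is $^*$-invariant. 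With these points made explicit the proof is complete; note also that your use of Assumption~(D) is genuine, since finite-dimensionality is precisely what places the inverses of $ww^c$ and $w^cw$ inside the associative subalgebra where your cancellation computations take place.
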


Theorem~\ref{invertibles_f} is particularly useful for the following class of functions.

\begin{definition}\label{def:tame}
A slice function is termed \emph{tame} if $N(f)$ is slice preserving and it coincides with $N(f^c)$.
\end{definition}

Indeed, Theorem~\ref{invertibles_f} yields the next proposition, where we will use the notation
\[
V(g):=\{x \in \OO \, | \, g(x)=0\}
\]
for all $g \in \mc{S}(\OO)$.

\begin{proposition}\label{reciprocal}
Let $f \in \mc{S}(\OO)$ be tame. If $\OO' := \OO \setminus V(N(f))$ is not empty, then $f$ admits a multiplicative inverse in $\mc{S}(\OO')$, namely
\[
f^{-\punto}(x) = (N(f)^{-\punto} \cdot f^c) (x)= (N(f)(x))^{-1} f^c(x).
\]
For any tame $g \in \mc{S}(\OO')$,
\begin{equation}\label{Nfg}
N(f \cdot g) = N(f) N(g) = N(g) N(f) = N(g\cdot f)
\end{equation}
and $f\cdot g$ is a tame element of $\mc{S}(\OO'')$ with $\OO'' := \OO' \setminus V(N(g))$ (provided $\OO''$ is not empty).

Furthermore, if $\OO'$ is open then $f^{-\punto}$ is slice regular if and only if $f$ is slice regular in $\OO'$. Finally, if $\OO$ is open in $Q_A$ and $f \in \mc{SR}(\OO)$ has $N(f)\not \equiv 0$, then $V(N(f))$ is the circularization of a closed and discrete subset of $\cc$.
\end{proposition}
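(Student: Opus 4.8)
The plan is to treat the four assertions in turn, using Theorem~\ref{invertibles_f} as the main engine together with the fact that slice preserving functions are central and lie in the nucleus of the alternative $^*$-algebra $\mc{S}(\OO)$. The latter is immediate from Proposition~\ref{prop:algebra}: if $p=\I(P)$ is slice preserving then $P$ takes values in $\rr_\cc$, which is central in $A_\cc$, so $p\cdot h=h\cdot p$, the element $p$ associates with every pair, and the slice product by $p$ agrees with the pointwise product. I will also record that, for a slice preserving $p=\I(P)$ that never vanishes on $\OO'$, the pointwise reciprocal $x\mapsto p(x)^{-1}$ (an inverse taken inside the field $\cc_J\ni p(x)$) is again slice preserving and equals $p^{-\punto}$, since at the stem level it corresponds to inverting the $\rr_\cc\cong\cc$-valued function $P$.

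For the first assertion, tameness gives that $N(f)$ is slice preserving and $N(f)=N(f^c)$; on $\OO'=\OO\setminus V(N(f))$ both $N(f)$ and $N(f^c)$ are nonvanishing, hence invertible by the preceding remark. Theorem~\ref{invertibles_f} then yields $f^{-\punto}=N(f^c)^{-\punto}\cdot f^c=N(f)^{-\punto}\cdot f^c$, and since $N(f)^{-\punto}$ is slice preserving this slice product equals the pointwise product $N(f)(x)^{-1}f^c(x)$, as claimed.

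The heart of the matter is the multiplicativity $N(f\cdot g)=N(f)N(g)$ for tame $f,g$. Passing to stem functions via $\I$ and using $(f\cdot g)^c=g^c\cdot f^c$, it suffices to prove, pointwise in $A_\cc$, the identity $(uv)(v^cu^c)=(uu^c)(vv^c)$ for $u=F(z)$, $v=G(z)$, knowing from tameness that $uu^c=u^cu$ and $vv^c=v^cv$ both lie in the center $\rr_\cc$ of $A_\cc$. When these ``norms'' are nonzero, $u$ and $v$ are invertible with $u^c=(uu^c)u^{-1}$ and $v^c=(vv^c)v^{-1}$, so all four elements $u,v,u^c,v^c$ lie in the subalgebra generated by $u$ and $v$, which is associative by Artin's theorem; there the identity is a trivial rearrangement using centrality of the norms. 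The hard part will be the locus where a norm vanishes (genuine zero divisors can occur in the complexification $A_\cc$): I would dispose of it by a continuity argument, or by a direct linearized-associator computation exploiting that $uu^c$ and $vv^c$ lie in the nucleus. Once the identity holds, $N(f\cdot g)=N(f)N(g)=N(g)N(f)=N(g\cdot f)$ (the middle equality because slice preserving normal functions commute), this common value is slice preserving, and applying the same formula to the tame pair $g^c,f^c$ (note that $f,g$ tame forces $f^c,g^c$ tame) gives $N((f\cdot g)^c)=N(g)N(f)=N(f\cdot g)$; hence $f\cdot g$ is tame by Definition~\ref{def:tame}, and the first part applies on $\OO''$.

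For the third assertion I would argue at the stem level: the $^*$-involution of $A_\cc$ commutes with multiplication by $\ui$, so $F$ holomorphic forces $F^c$ holomorphic, i.e.\ $f^c\in\mc{SR}(\OO')$; then $N(f)=f\cdot f^c\in\mc{SR}(\OO')$ is slice preserving, and its pointwise reciprocal corresponds to inverting the nonvanishing holomorphic $\rr_\cc\cong\cc$-valued stem function, so $N(f)^{-\punto}\in\mc{SR}(\OO')$ and finally $f^{-\punto}=N(f)^{-\punto}\cdot f^c\in\mc{SR}(\OO')$. The converse follows by symmetry, since $f^{-\punto}$ is again tame with nonvanishing normal function $N(f)^{-\punto}$ on $\OO'$ (by the last identities of Theorem~\ref{invertibles_f}) and $f=(f^{-\punto})^{-\punto}$. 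Finally, for the fourth assertion, tameness and slice regularity make $N(f)=\I(P)$ both slice preserving and slice regular, so $P$ identifies with an ordinary holomorphic function on $D\subseteq\cc$; writing $x=\alpha+\beta I$ and $z=\alpha+i\beta$ we get $N(f)(x)=0$ if and only if $P(z)=0$, independently of $I$, whence $V(N(f))=\OO_Z$ is the circularization of $Z:=\{z\in D\,|\,P(z)=0\}$. Since $N(f)\not\equiv0$ means $P\not\equiv0$ on each component of $D$, the identity theorem for holomorphic functions makes $Z$ a closed and discrete subset of $\cc$, completing the proof.
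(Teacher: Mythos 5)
A preliminary remark: the paper itself gives no proof of Proposition~\ref{reciprocal} --- it is recalled from~\cite{gpsalgebra} with the sole indication that it follows from Theorem~\ref{invertibles_f} --- so your proposal has to be judged on its own merits. Your treatment of the inverse formula, of the slice-regularity equivalence, and (essentially) of the description of $V(N(f))$ is correct, and your Artin/invertibility argument does prove the pointwise identity $(uv)(v^cu^c)=(uu^c)(vv^c)$ whenever both $u=F(z)$ and $v=G(z)$ are invertible, hence it proves \eqref{Nfg} and the tameness of $f\cdot g$ on $\OO''$. The genuine gap is exactly the point you label ``the hard part'' and then defer: \eqref{Nfg} is an identity of slice functions on $\OO'$, where $N(g)$ is allowed to vanish, and neither of your two patches closes it. The continuity argument fails twice over: the proposition concerns arbitrary elements of $\mc{S}(\OO')$, whose stem functions need not be continuous; and even for continuous tame $g$ the set $V(N(g))$ can have nonempty interior, so the locus where your argument applies need not be dense. (Over $A=\hh$, take $G=G_1+\ui G_2$ with $G_1=j\phi$, $G_2=k\psi$, $\phi$ even and $\psi$ odd in $\beta$; then $g=\I(G)$ is tame with $N(g)=\I(\phi^2-\psi^2)$, and $\phi,\psi$ can be chosen continuous so that $\phi^2-\psi^2$ vanishes on a nonempty open set without vanishing identically.)

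The ``linearized-associator computation'' does not go through either, and it is worth seeing why. With $u$ invertible, writing $u^c=n(u)u^{-1}$ and applying the middle Moufang identity, the difference $(uv)(v^cu^c)-(uu^c)(vv^c)$ reduces to a multiple of an associator of the form $(v,v^c,w)$ with $w$ external; so \eqref{Nfg} at a point of $V(N(g))$ is equivalent to the vanishing of such associators. This is not a formal consequence of tameness: Artin's theorem makes the subalgebra generated by the pair $v,v^c$ associative, but says nothing about associators involving a third element outside that subalgebra. The vanishing does hold when $A$ is associative (trivially) and in $\oo_\cc$ (because the octonionic involution is scalar, $v^c=t(v)-v$ with $t(v)\in\rr_\cc$), but neither mechanism is supplied by the hypotheses for a general $A$ satisfying Assumptions (A)--(F); filling this in requires a genuinely new ingredient, not a linearization. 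So, as written, you have established the second assertion only on $\OO''$ --- which may well be the intended reading, since the statement itself restricts the tameness conclusion to $\OO''$, but then the claim should be restricted explicitly rather than left pending. A final minor slip: in the last assertion, $N(f)\not\equiv0$ on $\OO$ does \emph{not} imply that the stem function $P$ is $\not\equiv0$ on every connected component of $D$, so the identity-theorem argument must be phrased component by component, with the hypothesis read accordingly.
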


\begin{example}\label{ex:DeltaInv}
For fixed $y \in Q_A$, consider the binomial $f(x) := x-y$. The functions $N(f)$ and $N(f^c)$ coincide with the slice preserving function $\Delta_y(x) = x^2-xt(y)+n(y)$, whose zero set is $\s_y$. By Proposition~\ref{reciprocal}, $f$ admits a multiplicative inverse
\[
f^{-\punto}(x) = \Delta_y(x)^{-1} (x-y^c) = (x^2-xt(y)+n(y))^{-1}(x-y^c)
\]
in $\mc{S}(\OO')$, where $\OO' = Q_A \setminus \s_y$. In the sequel, for each $n \in \zz$ we will denote by $(x-y)^{\punto n}$ the $n^{\rm{th}}$-power of $f$ with respect to the slice product. For the power $(x-y)^{\punto (-n)}$ we might also use the notation $(x-y)^{-\punto n}$.
\end{example}

More in general, we will be using the notation $f(x)\cdot g(x)$ for $(f\cdot g)(x)$ and the notation $f(x)^{\punto n}$ for $f^{\punto n}(x)$ when they are unambiguous. In such a case, $x$ will necessarily stand for a variable. We recall that $n(x):=xx^c$ and that a $^*$-algebra is called \emph{nonsingular} when for every element $x$ the equality $n(x) = 0$ implies $x=0$.

\begin{proposition}\label{SRnonsingular}
Assume that $\OO$ is open. The $^*$-algebra $\mc{SR}(\OO)$ is nonsingular if and only if $A$ is nonsingular and $\OO$ is a union of slice domains.
\end{proposition}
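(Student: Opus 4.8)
The plan is to translate nonsingularity of $\mc{SR}(\OO)$ into a statement about holomorphic stem functions and then treat the two directions separately. Since $\I$ is a $^*$-algebra isomorphism and $N(f)=\I(FF^c)$, the algebra $\mc{SR}(\OO)$ is nonsingular precisely when, for every holomorphic stem function $F=F_1+\ui F_2$ on $D$, the identity $FF^c\equiv0$ forces $F\equiv0$. I would record at the outset the key pointwise computation: at a real point $t\in D\cap\rr$ the stem condition gives $F_2(t)=0$ and $F(t)=F_1(t)\in A$, whence $(FF^c)(t)=F_1(t)F_1(t)^c=n(F_1(t))$.

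For the \emph{if} direction, assume $A$ is nonsingular and $\OO$ is a union of slice domains, and suppose $FF^c\equiv0$. On each connected component $D_0$ of $D$---which by hypothesis meets $\rr$ and is preserved by conjugation---the computation above shows $n(F_1(t))=0$ for every $t\in D_0\cap\rr$; nonsingularity of $A$ then yields $F_1(t)=0$, and with $F_2(t)=0$ we get $F\equiv0$ on the nonempty open interval $D_0\cap\rr$. Reading $F$ in coordinates with respect to a real basis of the complex vector space $A_\cc$, each coordinate is a holomorphic $\cc$-valued function on the connected open set $D_0$ that vanishes on a subset with accumulation points; the identity principle then forces $F\equiv0$ on $D_0$, and hence on all of $D$, so $f=\I(F)\equiv0$.

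For the \emph{only if} direction I would argue by contraposition, producing a nonzero $f\in\mc{SR}(\OO)$ with $N(f)\equiv0$ in each of the two forbidden situations. If $A$ is singular, pick $a\in A\setminus\{0\}$ with $n(a)=0$ and take the constant $f\equiv a$: it is a (polynomial, hence) slice regular function with $N(f)=\I(aa^c)=\I(n(a))=0$. If instead $\OO$ is not a union of slice domains, then $D$ has a product-type component $D_0=D_0^+\sqcup D_0^-$ with $D_0^\pm\subseteq\cc^\pm$; fixing any $J\in\s_A$ (which exists by Assumption~(D)), I would set $F$ equal to the constant $1+\ui J$ on $D_0^+$, to $1-\ui J$ on $D_0^-$, and to $0$ elsewhere. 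This $F$ is a nonzero holomorphic stem function, and a direct check using $J^2=-1$ and $J^c=-J$ gives $(1+\ui J)(1-\ui J)=0=(1-\ui J)(1+\ui J)$ in $A_\cc$, so $FF^c\equiv0$; thus $f=\I(F)$ is the desired counterexample.

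The conceptual crux---and the step I expect to require the most care---is the product-domain construction: it shows that nonsingularity can fail even when $A$ itself is nonsingular, because the complexification $A_\cc$ always contains the zero divisors $1\pm\ui J$, and a disconnected (conjugation-switched) base $D_0$ lets one realize them by a genuine holomorphic stem function without being constrained along a real axis. Verifying that this $F$ satisfies the stem relation $F(\overline{z})=\overline{F(z)}$ and patches to a global element of $\mc{SR}(\OO)$, together with confirming that the identity principle really applies on slice-domain components (i.e.\ that $D_0\cap\rr$ has accumulation points in $D_0$), are the points to check carefully.
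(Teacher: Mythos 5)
Your proof is correct and complete in both directions. One important point of context: this paper does not prove Proposition~\ref{SRnonsingular} at all --- it is recalled, without proof, from the reference \cite{gpsalgebra} on the algebra of slice functions --- so there is no internal argument to compare yours against; what you have written supplies a self-contained proof of a result the paper only cites. Your route is the natural one: reduce via the $^*$-isomorphism $\I$ to the statement that $FF^c\equiv 0$ forces $F\equiv 0$ for holomorphic stem functions; in the slice-domain case, use that $F_2$ vanishes on $D_0\cap\rr$ so that $(FF^c)(t)=n(F_1(t))$ there, apply nonsingularity of $A$, and propagate by the identity principle on the connected component $D_0$; in the converse, exhibit the zero divisors --- a constant $a\neq 0$ with $n(a)=0$ when $A$ is singular, and the locally constant stem function equal to $1+\ui J$ on $D_0^+$ and $1-\ui J$ on $D_0^-$ when a product-type component exists, where indeed $(1+\ui J)(1-\ui J)=1+J^2=0$ in $A_\cc$. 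The only slip is terminological: ``a real basis of the complex vector space $A_\cc$'' should read ``a real basis of $A$, viewed as a complex basis of $A_\cc$ for the complex structure given by $\ui$''; since $\rr_\cc$ lies in the center of $A_\cc$, the coordinates of $F$ in such a basis are genuinely $\cc$-valued holomorphic functions, which is exactly what your identity-principle step needs. With that rewording, every step (stem relation for the piecewise-defined $F$, openness and conjugation-invariance of the pieces, injectivity of $\I$ to conclude $f\neq 0$) checks out.
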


Since $N(f)$ is the norm of $f$ in the $^*$-algebra $\mc{SR}(\OO)$, when the latter is nonsingular we have that $N(f) \equiv 0$ implies $f \equiv 0$. The next consequences of Propositions~\ref{reciprocal} and~\ref{SRnonsingular} will also prove useful in the sequel.

\begin{proposition}\label{tameregularproduct}
Assume that $\OO$ is a slice domain or a product domain. If $f$ and $g$ are slice regular and tame in $\OO$, and $N(f)$ and $N(g)$ do not vanish identically, then $N(f\cdot g) = N(f)N(g) = N(g)N(f)$ in $\OO$. As a consequence, $f\cdot g$ is a tame element of $\mc{SR}(\OO)$.
\end{proposition}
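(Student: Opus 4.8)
The plan is to obtain the identity on a dense open subset of $\OO$ from Proposition~\ref{reciprocal} and then propagate it to all of $\OO$ by continuity. First I would record the bookkeeping. Since $\mc{SR}(\OO)$ is a $^*$-subalgebra of $\mc{S}(\OO)$, the functions $f^c,g^c,f\cdot g,g^c\cdot f^c$ and all their normal functions again lie in $\mc{SR}(\OO)$, hence are continuous. Tameness of $f$ means, by Definition~\ref{def:tame}, that $N(f)=N(f^c)$ is slice preserving; the same reasoning shows $f^c$ is tame with $N(f^c)=N(f)$, and symmetrically for $g$ and $g^c$. Thus $N(f),N(g),N(f^c),N(g^c)$ are slice preserving and, by hypothesis, not identically zero.

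Next I would cut out the bad sets. Because $\OO$ is a slice domain or a product domain (in particular open) and $N(f),N(g)\not\equiv 0$, the final assertion of Proposition~\ref{reciprocal} identifies $V(N(f))$ and $V(N(g))$ as circularizations of closed discrete subsets of $\cc$; these are closed and have empty interior in $\OO$, so that $\OO'':=\OO\setminus\big(V(N(f))\cup V(N(g))\big)$ is open, dense and nonempty. On $\OO''$ the hypotheses of Proposition~\ref{reciprocal} are satisfied (with $\OO':=\OO\setminus V(N(f))$, to which $f$ and $g$ restrict as tame functions), so equation~\eqref{Nfg} gives
\[
N(f\cdot g)=N(f)N(g)=N(g)N(f)=N(g\cdot f)\qquad\text{on }\OO''.
\]
Now $N(f\cdot g)$ and $N(f)N(g)$ (the latter a pointwise product of slice preserving functions) are continuous on all of $\OO$ and agree on the dense set $\OO''$, so they coincide on $\OO$; moreover $N(f)N(g)=N(g)N(f)$ holds pointwise everywhere, since the two slice preserving factors take values in the same commutative slice $\cc_J$ at each point and therefore commute. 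This establishes the displayed identity on $\OO$.

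Finally, for the tameness of $f\cdot g$, I would observe that $N(f\cdot g)=N(f)N(g)$ is a product of slice preserving functions, hence slice preserving. To match it with $N((f\cdot g)^c)$, I would use that $\I$ is a $^*$-isomorphism (Proposition~\ref{prop:algebra}), whence $(f\cdot g)^c=g^c\cdot f^c$, and apply the argument just completed to the tame regular pair $(g^c,f^c)$, whose normal functions $N(g^c)=N(g)$ and $N(f^c)=N(f)$ are again not identically zero; this yields
\[
N\big((f\cdot g)^c\big)=N(g^c\cdot f^c)=N(g^c)N(f^c)=N(g)N(f)=N(f)N(g)=N(f\cdot g).
\]
Thus $N(f\cdot g)$ is slice preserving and coincides with $N((f\cdot g)^c)$, so $f\cdot g$ is a tame element of $\mc{SR}(\OO)$.

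The step I expect to be the main obstacle is the passage from $\OO''$ to $\OO$: Proposition~\ref{reciprocal} delivers~\eqref{Nfg} only away from the zero sets of the normal functions, and it is precisely the assumption that $\OO$ is a single slice or product domain that forces these zero sets to be nowhere dense (through the discreteness statement in Proposition~\ref{reciprocal}, equivalently through the identity principle applied to the holomorphic restrictions of Lemma~\ref{splitting}), so that continuity can carry the identity across. On a general open $\OO$ this would break down, as a normal function could vanish identically on one connected component while being nonzero on another, preventing $\OO''$ from being dense.
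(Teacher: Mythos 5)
Your proposal is correct and follows the very route the paper indicates: the paper gives no explicit proof of Proposition~\ref{tameregularproduct}, presenting it only as a consequence of Proposition~\ref{reciprocal}, and your argument is the natural filling-in of that derivation. Specifically, applying \eqref{Nfg} on the dense open set obtained by deleting $V(N(f))\cup V(N(g))$ (circularizations of closed discrete sets, which is where the slice/product-domain hypothesis enters), extending the identity to all of $\OO$ by continuity, and then deducing tameness of $f\cdot g$ from $(f\cdot g)^c=g^c\cdot f^c$ applied to the tame pair $(g^c,f^c)$ is exactly what the paper's phrasing presupposes, and each step is justified correctly.
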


\begin{proposition}\label{tamenotzerodivisor}
If $A$ is nonsingular and $\OO$ is a slice domain, then each element $f\in\mc{SR}(\OO)$ that is tame cannot be a zero divisor in $\mc{S}^0(\OO)$.
\end{proposition}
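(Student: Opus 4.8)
The plan is to show that a nonzero tame $f\in\mc{SR}(\OO)$ becomes invertible after removing a ``thin'' set from $\OO$, and that an invertible element of an alternative algebra can never be a zero divisor. Throughout I would assume $f\not\equiv 0$, the zero element being excluded from the notion of zero divisor, and prove that $f\cdot g=0$ or $g\cdot f=0$ with $g\in\mc{S}^0(\OO)$ forces $g\equiv 0$.

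First I would record the consequences of the hypotheses. By Proposition~\ref{SRnonsingular}, since $A$ is nonsingular and $\OO$ is a slice domain (hence a union of slice domains), the $^*$-algebra $\mc{SR}(\OO)$ is nonsingular; thus $f\not\equiv 0$ forces $N(f)\not\equiv 0$. As $f$ is tame with $N(f)\not\equiv 0$, the final assertion of Proposition~\ref{reciprocal} shows that $V(N(f))$ is the circularization of a closed and discrete subset of $\cc$, so $\OO':=\OO\setminus V(N(f))$ is an open circular set on which the same proposition furnishes a multiplicative inverse $f^{-\punto}\in\mc{S}(\OO')$ of $f|_{\OO'}$. The key topological point is that $\OO'$ is dense in $\OO$: the set $V(N(f))$ is a locally finite union of spheres $\s_y$, and each such $\s_y$ meets every slice $\OO_J$ in at most two points, hence is nowhere dense in $Q_A$.

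Next I would invoke the inverse property of alternative algebras. Suppose $g\in\mc{S}^0(\OO)$ satisfies $f\cdot g=0$. Restriction to $\OO'$ is a $^*$-algebra homomorphism (it merely restricts the underlying stem functions to the conjugation-invariant set obtained from $D$), so $f|_{\OO'}\cdot g|_{\OO'}=0$ in the alternative algebra $\mc{S}(\OO')$. The left inverse property then gives $g|_{\OO'}=f^{-\punto}\cdot(f|_{\OO'}\cdot g|_{\OO'})=0$, while the symmetric identity $(g|_{\OO'}\cdot f|_{\OO'})\cdot f^{-\punto}=g|_{\OO'}$ disposes of the case $g\cdot f=0$. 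In either case $g$ vanishes on the dense set $\OO'$, and since $g$ is continuous it vanishes on all of $\OO$. Hence no nonzero $g\in\mc{S}^0(\OO)$ annihilates $f$ on either side, so $f$ is not a zero divisor.

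The step I expect to be the main obstacle is the purely algebraic one: one must guarantee that an invertible element of the \emph{merely} alternative algebra $\mc{S}(\OO')$ satisfies $f^{-\punto}\cdot(f\cdot g)=g$ and $(g\cdot f)\cdot f^{-\punto}=g$ in the absence of full associativity. This is exactly the inverse property for invertible elements of a unital alternative algebra, which follows from the Moufang identities (equivalently, via Artin's theorem, once the inverse is known to lie in the associative subalgebra generated by $f$); I would isolate it as the one nontrivial input. A secondary point requiring explicit justification is the density--continuity passage from $\OO'$ back to $\OO$, which rests on the spheres constituting $V(N(f))$ being nowhere dense in $Q_A$ and on $g\in\mc{S}^0(\OO)$ being continuous.
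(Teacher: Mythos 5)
Your proposal is correct and takes precisely the route the paper has in mind: the paper gives no explicit proof of Proposition~\ref{tamenotzerodivisor}, presenting it only as a consequence of Propositions~\ref{reciprocal} and~\ref{SRnonsingular}, and your argument (nonsingularity of $\mc{SR}(\OO)$ forces $N(f)\not\equiv 0$; hence $V(N(f))$ is a closed, nowhere dense circular set off which Proposition~\ref{reciprocal} provides the inverse $f^{-\punto}$; then the inverse property, density of $\OO'$, and continuity of $g$ finish) is a faithful implementation of exactly that derivation. One caution on your final paragraph: the parenthetical appeal to Artin's theorem is not a valid substitute for the Moufang argument, because Artin's theorem controls only the subalgebra generated by the two elements $f$ and $f^{-\punto}$ and says nothing about the associator $(f^{-\punto},f,g)$ involving the arbitrary third element $g$; the inverse property $f^{-\punto}\cdot(f\cdot g)=g$ genuinely requires the Moufang identities together with linearized alternativity (and, in the standard derivation, the absence of $2$-torsion, which is automatic here since $\mc{S}(\OO')$ is an algebra over $\rr$), so that identity — not Artin — should be the cited input.
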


We now take an additional assumption that allows, as explained in~\cite{expansionsalgebras}, to consider power series as further examples of slice regular functions.

\vspace{.5em}

{\bf Assumption (F).} 
\emph{We assume that our real alternative *-algebra $A$ is equipped with a norm $\|\cdot\|_A$ such that
\begin{equation} \label{eq:norm}
\|x\|_A^2= n(x)
\end{equation}
for every $x \in Q_A$.
}

\vspace{.5em}

Since the dimension of $A$ is assumed to be finite, the topology induced by $\|\cdot\|_A$ on $A$ is equal to the Euclidean one. For each $J \in \s_A$, the norm $\|\cdot\|_A$ coincides in $\cc_J$ with the modulus function $|\cdot|$ induced by the natural identification of $\cc_J$ with the complex field. When appropriate, we will also use the latter notation. Assumption (F) guarantees that $\s_A$, defined by formula~\eqref{eq:s_A}, is a closed subset of the compact set
\begin{equation*}
S_A:=\big\{x \in A \, \big| \, \|x\|_A =1 \big\}.
\end{equation*}
Hence, $\s_A$ is compact and so is
\begin{equation*}
S_A \cap Q_A =  \{\alpha+\beta J \in A \, | \,  (\alpha,\beta) \in S^1, J \in \s_A\}.
\end{equation*}
In particular, each sphere $\s_x$ with $x \in Q_A$ is compact and $Q_A$ is a closed subset of $A$.

\begin{remark}\label{multiplicativeestimates}
If we set
\[
C_A:=\max_{x,y \in S_A} \|x y\|_A \in [1,+\infty), \qquad
c_A:= \min_{x,z \in S_A \cap Q_A, y \in S_A}\|(x y) z\|_A \in (0,1].
\]
then
\begin{equation} \label{eq:C_A}
\|x y\|_A \leq C_A\, \|x\|_A\, \|y\|_A \; \text{ for every $x,y \in A$} 
\end{equation}
and 
\begin{equation} \label{eq:c_A}
c_A\, \|x\|_A\, \|y\|_A \leq \|x y\|_A \; \text{ for every $x, y \in A$ with $x \in Q_A$ or $y \in Q_A$}.
\end{equation}
\end{remark}

The previous remark allows to evaluate the convergence of a power series with the root criterion, which leads to another class of examples of slice regular functions.

\begin{example}
Every power series of the form $\sum_{n \in \nn} x^n a_n$ with $\{a_n\}_{n \in \nn}\subseteq A$ converges on the intersection between $Q_A$ and $B(0, R) = \{x \in A\ |\ \Vert x \Vert_A<R\}$ for some $R \in [0,+\infty] := [0,+\infty) \cup \{+\infty\}$. If $R>0$, then the sum of the series is a slice regular function on $Q_A \cap B(0,R)$.
\end{example}

We now exhibit examples of algebras fulfilling all our assumptions (A) - (F).

\begin{examples}\label{ex:divisionalgebras}
The complex field $\cc$, the skew field of quaternions $\hh$, and the algebra $\oo$ of octonions (see~\cite{ebbinghaus} for their definitions) are unitary alternative algebras of dimensions $2, 4$ and $8$, respectively. On all such algebras $A$, a $^*$-involution is defined to act as $(r+v)^c=r-v$ for all $r \in \rr$ and all $v$ in the Euclidean orthogonal complement of $\rr$ in $A$. The norm $n(x)$ turns out to coincide with the squared Euclidean norm $\Vert x\Vert^2$. As a consequence, $Q_A = A$ and the respective $\s_A$ coincide with the unit spheres $S^0,S^2,S^6$ in the Euclidean subspaces $\im(\cc),\im(\hh),\im(\oo)$. Finally, each such $A$ is a \emph{division algebra}: every $x\neq0$ admits a multiplicative inverse, namely $x^{-1} = n(x)^{-1} x^c = x^c\, n(x)^{-1}$. In other words, $A \setminus\{0\}$ is a multiplicative Moufang loop (a multiplicative group in the associative cases $A=\cc,\hh$).
\end{examples}

Slice regularity coincides with holomorphy if $A=\cc$. Furthermore, it had originally been introduced with a completely different approach in the case $A= \hh$ (see~\cite[Chapter 1]{librospringer}, which also points out the original references) and $A = \oo$ (see~\cite{rocky}).

Another important class of algebras fulfilling our assumptions is the following.

\begin{examples}
For all $m\geq1$, the Clifford algebra $\rr_m = C\ell_{0,m}$ is a unitary associative algebra of dimension $2^m$ with the following conventions:
\begin{itemize}
\item $1, e_1,\ldots,e_m, e_{12}, \ldots, e_{m-1,m}, e_{123},\ldots,e_{1\ldots m}$ denotes the standard basis; 
\item $1$ is defined to be the neutral element;
\item $e_i^2 := -1$ for all $i \in \{0,\ldots,m\}$;
\item $e_ie_j = -e_j e_i$ for all distinct $i,j$;
\item for all $i_1,\ldots,i_s \in \{1,\ldots,m\}$ with $i_1<\ldots<i_s$, the product $e_{i_1}\ldots e_{i_s}$ is defined to be $e_{i_1\ldots i_s}$.
\end{itemize}
The algebra $\rr_m$ becomes a $^*$-algebra when endowed with Clifford conjugation $x\mapsto x^c$, defined to act on $e_{i_1\ldots i_s}$ as the identity $id$ if $s\equiv0,3 \mod 4$ and as $-id$ if $s\equiv1,2 \mod 4$.

If we denote by $\Vert x\Vert$ and $\langle \cdot,\cdot \rangle$ the Euclidean norm and scalar product in $\rr_m$, it holds
\begin{equation}\label{cliffordnorm}
n(x) = \sum_{s\, \equiv\, 0,3 \text{ $\mathrm{mod}$ }4} \langle x, e_{i_1\ldots i_s}x \rangle e_{i_1\ldots i_s} = \Vert x\Vert^2 + \langle x, e_{123}x \rangle e_{123} + \ldots
\end{equation}
The quadratic cone $Q_{\rr_m}$ is a real algebraic subset of $\rr_m$, whose dimension is $2,4,6$ for $m = 1,2,3$ and then grows exponentially with $m$. Since the norm $n(x)$ of each $x = \alpha + \beta J \in Q_{\rr_m}$ is the non negative real number $n(x)=\alpha^2+\beta^2$, we infer at once that $n$ coincides with $\Vert \cdot \Vert^2$ in $Q_{\rr_m}$.

Another consequence of formula~\eqref{cliffordnorm} is that $\rr_m$ is nonsingular. On the other hand, $\rr_m$ is not a division algebra for $m\geq 3$ since for all nonzero $q \in \rr_2 \subseteq \rr_m$, the numbers $q\pm qe_{123}$ are \emph{zero divisors}; that is, nontrivial solutions $x$ to $ax=0$ or $xa=0$ with $a \neq 0$.

For more details, see~\cite{GHS}. See also~\cite[Example 1.15]{gpsalgebra}.
\end{examples}

As we mentioned in the introduction, the theory of $\rr_3$-valued slice regular functions had been introduced in~\cite{clifford}. Moreover,~\cite{israel} had introduced the related theory of \emph{slice monogenic} functions from $\rr^{m+1}$ to $\rr_m$.

\begin{example}\label{bicomplex}
The algebra of bicomplex numbers $\B\cc = \cc \oplus \cc$, with 
\[(z_1,z_2)\,(w_1,w_2) := (z_1w_1,z_2w_2),\]
is an associative and commutative algebra over $\rr$. It is unitary, with $1_{\B\cc} = (1,1)$, and it is a $^*$-algebra when endowed with the involution $(z_1,z_2)^c:=(z_1^c,z_2^c)$. Clearly, 
\[t(z_1,z_2) = (t(z_1),t(z_2)) = 2 (Re(z_1),Re(z_2)),\]
\[n(z_1,z_2) = (n(z_1),n(z_2)) = (|z_1|^2,|z_2|^2).\]
In particular, $\B\cc$ is nonsingular. Moreover, it holds $\s_{\B\cc} = \{e^+,e^-,-e^+,-e^-\}$ with $e^+:=(i,i)$ and $e^-:=(i,-i)$. Hence, $Q_{\B\cc}$ is the union of the planes $\cc_{e^+} = \{(z_1,z_1)\,|\, z_1 \in \cc\}$ and $\cc_{e^-} = \{(z_1,z_1^c)\,|\, z_1 \in \cc\}$, which intersect at $\rr = \rr\,1_{\B\cc}$.
If we use the rescaled Euclidean norm
\[\|(z_1,z_2)\|_{\B\cc} := \sqrt{\frac{|z_1|^2+|z_2|^2}2}\]
then the function $n$ coincides with $\|\cdot\|_{\B\cc}^2 = \|\cdot\|_{\B\cc}^2\,1_{\B\cc}$ in $Q_{\B\cc}$.
\end{example}

Other known algebras, such as that of split-complex numbers $\s\cc = C\ell_{1,0}$, do not fulfill Assumption (D). The algebras of split-quaternions $\s\hh= C\ell_{1,1}$, of dual quaternions $\D\hh$ and of split-octonions $\s\oo$ with their standard conjugations do not fulfill Assumption (F). Indeed, in each of these three $^*$-algebras $A$ the sphere $\s_A$ is a real algebraic subset that is not compact. See~\cite[Examples 1.13]{gpsalgebra} for more details.


\section{Convergence of series of continuous slice functions}\label{sec:series}

In this section, we set up the basic framework to deal with series of continuous slice functions. This will ease the subsequent construction of Laurent series and spherical Laurent series over $A$, which will in turn allow us to classify and study the singularities of slice regular functions. 

By \cite[Proposition 7(1)]{perotti}, a slice function $f \in \mc{S}(\OO)$ is continuous if, and only if, it belongs to the subalgebra $\mc{S}^0(\OO)$ of slice functions on $\OO$ that are induced by continuous stem functions.

\begin{definition}
Let $\OO$ be a circular open subset of $Q_A$ and let $\{f_n\}_{n \in \zz}$ be a sequence in $\mc{S}^0(\OO)$. The series $f=\sum_{n \in \zz} f_n$ is termed \emph{totally convergent} on a circular compact subset $T$ of $\OO$ if the number series
\[\sum_{n \in \zz} \max_{x \in T}\|f_n(x)\|_A\]
converges. A nonempty circular open subset $\OO'$ of $\OO$ is called a \emph{domain of convergence} for $f$ if:
\begin{itemize}
\item $f$ converges totally on each circular compact subset of $\OO'$; and
\item every circular open subset $\OO''$ of $\OO$ with $\OO'\subsetneq\OO''$ includes a point $x$ where $\sum_{n \in \nn} f_n(x)$ or $\sum_{m \in \nn} f_{-m}(x)$ does not converge with respect to $\|\cdot\|_A$.
\end{itemize}
If no such $\OO'$ exists, then we say that the domain of convergence of $f(x)$ is empty.
\end{definition}

\begin{remark}
Let $\OO$ be a circular open subset of $Q_A$. Let $\{f_n\}_{n \in \zz}$ be a sequence in $\mc{S}^0(\OO)$ and suppose that the series $f=\sum_{n \in \zz} f_n$ has a nonempty domain of convergence $\OO'$. Then for every $y \in \OO'$, since the sphere $\s_y$ is circular and compact under our Assumption (F), the series $f$ converges at $y$. 
As a consequence, $f$ admits a unique domain of convergence (possibly empty). Therefore, we may refer to it as \emph{the} domain of convergence of $f$.
\end{remark}

The next remark is an immediate consequence of~\cite[Lemma 3.2]{global} and of Lemma~\ref{splitting}.

\begin{remark}\label{regularsum}
Let $\{f_n\}_{n \in \zz}$ be a sequence in $\mc{S}^0(\OO)$, where $\OO$ is a circular open subset of $Q_A$. If $f=\sum_{n \in \zz} f_n$ converges totally on each circular compact $T\subseteq\OO$, then its sum is a continuous slice function $f \in \mc{S}^0(\OO)$. If each $f_n$ is slice regular and if $f$ has a nonempty domain of convergence $\OO'\subseteq\OO$, then the sum is a slice regular function $f \in \mc{SR}(\OO')$.
\end{remark}

We are ready to prove the next result, which exploits again our Assumption (F). For each $J \in \s_A$, we will use the notation $\mathscr{H}_J:=\{\alpha + \beta J\in\cc_J\ |\ \alpha, \beta \in \rr, \beta\geq 0\}$.

\begin{theorem}\label{totalcircular}
Let $\{f_n\}_{n \in \zz}$ be a sequence of continuous slice functions on a circular open subset $\OO$ of $Q_A$ and let $T$ be a circular compact subset of $\OO$. The following assertions are equivalent.
\begin{enumerate}
\item $\sum_{n \in \zz} f_n$ converges totally in $T$.
\item There exists $J \in \s_A$ such that $\sum_{n \in \zz} f_n$ converges totally in $T_J$.
\item There exist $J,K \in \s_A$ with $J-K$ invertible such that $\sum_{n \in \zz} f_n$ converges totally in $T\cap\mathscr{H}_J$ and in $T\cap\mathscr{H}_K$.
\end{enumerate}
\end{theorem}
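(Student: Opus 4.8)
The plan is to prove the chain of equivalences $(1)\Rightarrow(3)\Rightarrow(2)\Rightarrow(1)$, with the middle implication being essentially trivial and the real content lying in the two ``reconstruction'' steps, where total convergence on a lower-dimensional set is upgraded to total convergence on all of $T$. Throughout, the key mechanism is the representation formula~\eqref{rep1} together with the multiplicative estimates~\eqref{eq:C_A} and~\eqref{eq:c_A} from Remark~\ref{multiplicativeestimates}, which allow one to bound $\|f_n(x)\|_A$ uniformly over a sphere in terms of the values of $f_n$ at two points of that sphere.

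First I would dispose of the easy implications. The implication $(1)\Rightarrow(3)$ is immediate: since $T\cap\mathscr{H}_J$ and $T\cap\mathscr{H}_K$ are subsets of $T$, total convergence on $T$ restricts to them, and one picks any $J,K\in\s_A$ with $J-K$ invertible (such a pair exists because $\s_A\neq\emptyset$ by Assumption (D), and one may take $K=-J$, for which $J-K=2J$ is invertible). Similarly, $(3)\Rightarrow(2)$ requires observing that $T_J=T\cap\cc_J=(T\cap\mathscr{H}_J)\cup(T\cap\mathscr{H}_{-J})$, so I would actually prove the slightly stronger fact that total convergence on the two half-slices $T\cap\mathscr{H}_J$ and $T\cap\mathscr{H}_K$ already yields total convergence on the full slice $T_J$; this is where the genuine argument enters. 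The only subtlety is matching the $K$ appearing in~(3) with the $J$ appearing in~(2): since each $f_n$ is constant-on-spheres in the sense encoded by the spherical value and spherical derivative, the maximum of $\|f_n\|_A$ over a half-slice through $\mathscr{H}_K$ controls its values on the conjugate half-slice $\mathscr{H}_{-K}$, because $f_n(x^c)$ and $f_n(x)$ together determine $\vs{f_n}$ and $(f_n)'_s$.

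The heart of the proof is $(2)\Rightarrow(1)$. Here I would fix $x=\alpha+\beta I\in T$ arbitrary, with $I\in\s_A$, and let $y=\alpha+\beta J$, $z=\alpha-\beta J=y^c$ be the two points of the same sphere $\s_x$ lying in $\cc_J$; both belong to $T_J$ because $T$ is circular. Applying the representation formula~\eqref{rep2} (the case $K=-J$) to each $f_n$ gives
\[
f_n(x)=\tfrac12\bigl(f_n(y)+f_n(y^c)\bigr)-\tfrac{I}{2}\Bigl(J\bigl(f_n(y)-f_n(y^c)\bigr)\Bigr).
\]
Taking $\|\cdot\|_A$ and applying the submultiplicative estimate~\eqref{eq:C_A} twice to the second term (noting $\|I\|_A=\|J\|_A=1$ since $I,J\in\s_A\subseteq S_A$), I obtain a bound of the form $\|f_n(x)\|_A\leq c\,(\|f_n(y)\|_A+\|f_n(y^c)\|_A)$ with a constant $c$ depending only on $C_A$, hence only on $A$, and in particular independent of $n$, of $x$, and of the choice of $I$. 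Since $y,y^c\in T_J$, the right-hand side is at most $2c\max_{w\in T_J}\|f_n(w)\|_A$. Taking the maximum over $x\in T$ yields $\max_{x\in T}\|f_n(x)\|_A\leq 2c\max_{w\in T_J}\|f_n(w)\|_A$, and summing over $n\in\zz$ shows that total convergence on $T_J$ forces total convergence on $T$.

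The main obstacle I anticipate is purely bookkeeping rather than conceptual: one must verify that the constant extracted from the representation formula is genuinely uniform in $x\in T$ and $I\in\s_A$, which hinges on the fact that~\eqref{eq:C_A} holds with a single constant $C_A$ for all of $A$, and on $\s_A$ being compact (Assumption (F)) so that the factors $I,J$ have norm exactly $1$. A secondary point requiring care is the passage in $(3)\Rightarrow(2)$ from two conjugate half-slices to the full slice, or equivalently arranging that the pair $(J,K)$ in~(3) can be used to control all of $T_J$; this is handled by the same representation-formula estimate, now reconstructing values on $T\cap\mathscr{H}_{-K}$ from those on $T\cap\mathscr{H}_K$ via conjugation, so no new idea is needed. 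Once these uniformity checks are in place, the equivalence of all three statements follows.
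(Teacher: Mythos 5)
Your handling of the main implication, (2)$\Rightarrow$(1), is correct and is in fact the paper's own argument: apply the representation formula~\eqref{rep2} to each $f_n$, estimate the second term by two applications of~\eqref{eq:C_A} (using $\|I\|_A=\|J\|_A=1$), obtain $\|f_n(x)\|_A\leq\frac{1+C_A^2}{2}\bigl(\|f_n(y)\|_A+\|f_n(y^c)\|_A\bigr)$ with a constant independent of $n$, $x$ and $I$, then take maxima over the compact sets and sum. The implications (1)$\Rightarrow$(2) and (1)$\Rightarrow$(3) are trivial restrictions in both your proposal and the paper.

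The gap is in your step (3)$\Rightarrow$(2). You justify it by claiming that the values of $f_n$ on $T\cap\mathscr{H}_K$ control its values on the conjugate half-slice $T\cap\mathscr{H}_{-K}$ ``via conjugation'', because $f_n(x)$ and $f_n(x^c)$ together determine $\vs{f_n}$ and $(f_n)'_s$. This is backwards: determining the spherical value and spherical derivative requires \emph{both} $f_n(x)$ and $f_n(x^c)$, and for a general slice function the value at $x^c$ is neither determined by nor bounded in terms of the value at $x$. Concretely, fix $\beta>0$ and $b\in A\setminus\{0\}$, and let $f$ be the slice function defined near the sphere $\alpha+\beta\s_A$ by the stem function with $F_1\equiv -Kb$ and $F_2\equiv b$ on the upper component of its domain (and $F_2\equiv -b$ on the lower one); then $f(\alpha+\beta K)=-Kb+Kb=0$ while $f(\alpha-\beta K)=-2Kb\neq0$, so no inequality $\max_{T\cap\mathscr{H}_{-K}}\|f\|_A\leq C\max_{T\cap\mathscr{H}_K}\|f\|_A$ can hold, and taking $f_n:=f$ for every $n\in\nn$ shows the failure at the level of series. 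The repair needs no new idea, but it must use \emph{both} half-slices simultaneously, which is exactly how the paper proves (3)$\Rightarrow$(1) directly: write any $x\in T$ as $\alpha+\beta I$ with $\beta\geq0$ and $I\in\s_A$; then $y:=\alpha+\beta J\in T\cap\mathscr{H}_J$ and $z:=\alpha+\beta K\in T\cap\mathscr{H}_K$ because $T$ is circular, and formula~\eqref{rep1} combined with~\eqref{eq:C_A} gives
\[
\|f_n(x)\|_A\leq 2C_A^2\,\|(J-K)^{-1}\|_A\,\bigl(\|f_n(y)\|_A+\|f_n(z)\|_A\bigr)
\]
uniformly in $n$, $x$ and $I$. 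Maximizing over $x\in T$ and summing yields total convergence on all of $T$ — in particular on $T\cap\mathscr{H}_{-J}$, hence on $T_J$ — which closes your cycle of implications.
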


\begin{proof}
Clearly, {\it 1} implies both {\it 2} and {\it 3}.
Moreover, {\it 2} implies {\it 1} as a consequence of the following estimate, which follows from the Representation Formula~\eqref{rep2} and from Remark~\ref{multiplicativeestimates}: if $x = \frac 12 (z+z^c) + \frac I 2 \left(J (z^c- z)\right)$ with $z \in T_J$, then
\begin{align*}
\|f_n(x)\|_A &\leq \frac12 \|f_n(z) + f_n(z^c)\|_A + \frac12 \|I(J(f_n(z) - f_n(z^c)))\|_A\\
&\leq \frac12 \|f_n(z) + f_n(z^c)\|_A + \frac{C_A^2}2 \|f_n(z) - f_n(z^c)\|_A\\
&\leq \frac{1+C_A^2}2 (\|f_n(z)\|_A + \|f_n(z^c)\|_A).
\end{align*}
A similar estimate, based on the Representation Formula~\eqref{rep1}, proves that {\it 3} implies {\it 1}.
\end{proof}

\begin{corollary}\label{domainconvergence}
Let $\{f_n\}_{n \in \zz}$ be a sequence of slice regular functions in $\mc{SR}(\OO)$. For each circular open subset $\OO'$ of $\OO$, the following assertions are equivalent.
\begin{enumerate}
\item $\OO'$ is the domain of convergence of $f=\sum_{n \in \zz} f_n$.
\item There exists $J \in \s_A$ such that:
\begin{itemize}
\item $f$ converges totally on each compact subset of $\OO'_J$; 
\item for every open subset $U_J$ of $\OO_J$ preserved by $z \mapsto z^c$ and with $\OO'_J\subsetneq U_J$, there exists a point $z \in U_J\setminus\OO_J'$ such that $\sum_{n \in \nn} f_n(z)$ or $\sum_{m \in \nn} f_{-m}(z)$ does not converge.
\end{itemize}
\item There exist $J,K \in \s_A$ with $J-K$ invertible such that:
\begin{itemize}
\item $f$ converges totally on each compact subset of $\OO'\cap(\mathscr{H}_J \cup\mathscr{H}_K)$;
\item for every circular open subset $U$ of $\OO$ with $\OO'\subsetneq U$ there exists a point $z \in (U\setminus\OO')\cap(\mathscr{H}_J \cup\mathscr{H}_K)$ such that $\sum_{n \in \nn} f_n(z)$ or $\sum_{m \in \nn} f_{-m}(z)$ does not converge.
\end{itemize}
\end{enumerate}
\end{corollary}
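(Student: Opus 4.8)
The plan is to prove Corollary~\ref{domainconvergence} by leveraging Theorem~\ref{totalcircular} to reduce each of the three conditions characterizing the domain of convergence to a single condition, and then to match them pairwise. The central observation is that a domain of convergence $\OO'$ is pinned down by two independent requirements: (i) \emph{total convergence} on every circular compact subset of $\OO'$, and (ii) \emph{maximality}, i.e. any strictly larger circular open set contains a point of divergence. The three formulations differ only in how they test these two requirements — on a whole slice $\OO'_J$, on a single slice, or on two half-slices $\mathscr{H}_J, \mathscr{H}_K$ — and Theorem~\ref{totalcircular} is precisely the tool that shows these tests are equivalent.

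First I would handle the total convergence half. To see that the convergence clause in {\it 1} is equivalent to the convergence clauses in {\it 2} and {\it 3}, I would argue that total convergence on every circular compact $T \subseteq \OO'$ is equivalent to total convergence on every compact subset of $\OO'_J$ (respectively, of $\OO' \cap (\mathscr{H}_J \cup \mathscr{H}_K)$). The forward directions are trivial by restriction. For the reverse directions, given a circular compact $T \subseteq \OO'$, I would apply Theorem~\ref{totalcircular} with this $T$: statement {\it 2} of that theorem supplies the slice version and statement {\it 3} supplies the half-slice version, each yielding total convergence on all of $T$. A small point to check here is that $T_J = T \cap \cc_J$ is indeed compact (being the intersection of a compact set with a closed slice) and that every compact subset of $\OO'_J$ sits inside some $T_J$ with $T$ circular compact in $\OO'$ — this follows by circularizing.

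Next I would address the maximality half, which is where the real bookkeeping lies. The three formulations phrase maximality differently: {\it 1} quantifies over circular open $\OO'' \supsetneq \OO'$ inside $\OO$ and demands a point of divergence of $\sum_{n} f_n$ or $\sum_{m} f_{-m}$; {\it 2} quantifies over conjugation-invariant open $U_J \supsetneq \OO'_J$ inside $\OO_J$ and locates the divergence point on the slice $\cc_J$; {\it 3} quantifies over circular open $U \supsetneq \OO'$ but locates the divergence point within $\mathscr{H}_J \cup \mathscr{H}_K$. The key structural fact I would exploit is the bijective correspondence $E \mapsto \OO_E$ between conjugation-invariant open subsets of $\cc$ (equivalently, of a slice $\cc_J$) and circular open subsets of $Q_A$, which makes ``circular open $\OO'' \supsetneq \OO'$'' match ``conjugation-invariant open $U_J = \OO''_J \supsetneq \OO'_J$''. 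Using the representation formula and the preceding convergence equivalence, I would show that a divergence point exists somewhere in $\OO'' \setminus \OO'$ if and only if one exists in $(\OO'' \setminus \OO')_J$, because total convergence of the series at a sphere $\s_x$ is detected on any single slice of that sphere (again via Theorem~\ref{totalcircular} applied to $\s_x$). The same argument, reading statement {\it 3} of Theorem~\ref{totalcircular}, transfers the divergence test to the two half-slices $\mathscr{H}_J \cup \mathscr{H}_K$.

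The hard part will be the maximality direction, specifically keeping the logical structure of the contrapositives straight while transferring between the circular picture and the slice picture. The subtlety is that divergence is stated in terms of the \emph{separate} convergence of $\sum_{n \in \nn} f_n$ and $\sum_{m \in \nn} f_{-m}$ at a point, not total convergence on a compact set, so the passage between slices must be done pointwise along a sphere rather than via a compact total-convergence estimate. I would reconcile this by noting that, for a fixed sphere $\s_x$, the Representation Formula~\eqref{rep2} expresses $f_n(x)$ as a fixed $A$-linear combination of $f_n(z)$ and $f_n(z^c)$ for $z \in \s_x \cap \cc_J$; hence the partial sums at $x$ and at $z$ are linearly related by a single invertible transformation independent of $n$, so $\sum_n f_n$ converges at $x$ if and only if it converges at the corresponding slice point, and likewise for the tail $\sum_m f_{-m}$. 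This upgrades the compact total-convergence equivalence of Theorem~\ref{totalcircular} to the pointwise divergence-detection needed for maximality, and with it both {\it 1} $\Leftrightarrow$ {\it 2} and {\it 1} $\Leftrightarrow$ {\it 3} follow by combining the two halves.
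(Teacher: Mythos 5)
Your proposal is correct and is essentially the intended derivation: the paper states Corollary~\ref{domainconvergence} without proof, as an immediate consequence of Theorem~\ref{totalcircular}, and your two-step reduction --- Theorem~\ref{totalcircular} together with circularization of compact sets for the total-convergence clause, and the pointwise representation-formula transfer along spheres for the maximality clause --- is exactly the argument the authors leave implicit. One small caveat: your phrase ``converges at $x$ if and only if it converges at the corresponding slice point'' should be understood as a statement about the conjugate pair $\{z,z^c\}$ (convergence at \emph{both} slice points of $\s_x$ implies convergence at every point of $\s_x$, and conversely), which is in fact how your argument uses it, since $U_J$ is conjugation-invariant and both $z$ and $z^c$ are available as divergence candidates in $U_J\setminus\OO'_J$.
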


We point out that, both in the previous theorem and in the subsequent corollary, case number {\it 3} is included for the sake of completeness. It will not be used in the rest of the paper.


\section{Laurent series and expansions} \label{sec:power-laurent}

The present section introduces a notion of Laurent series that generalizes the concept of regular power series introduced in~\cite{powerseries,expansionsalgebras} and that of quaternionic Laurent series given in~\cite{singularities}. 

\begin{definition}
Let $y \in Q_A$. For any sequence $\{a_n\}_{n\in \zz}$ in $A$, the series
\begin{equation}
\sum_{n \in \zz}(x-y)^{\punto n} \cdot a_n
\end{equation}
is called the \emph{Laurent series centered at $y$ associated with $\{a_n\}_{n\in \zz}$}. If $a_n = 0$ for all $n<0$, then it is called the \emph{power series centered at $y$ associated with $\{a_n\}_{n\in \nn}$}.
\end{definition}

We note that $(x-y)^{\punto n} \cdot a_n$ may be different from the function $(x-y)^{\punto n} a_n$:
\begin{example}\label{ex:productbyconstant}
If $A=\oo = \hh+ l \hh$ with its standard basis $1,i,j,k,l,li,lj,lk$, then 
\[f(x) := (x+i)^{\punto 2} \cdot l = (x^2+2xi -1)\cdot l = x^2l+2x(il) -l\]
is a different function than $g(x):=(x+i)^{\punto 2} l = (x^2+2xi -1)l = x^2l+2(xi)l -l$.
\end{example}

For this reason, the next lemma and the subsequent definition will be useful.

\begin{lemma}\label{productpreservedslice}
Let $f,g: \OO \to A$ be slice functions and let $J \in \s_A$. If $f(\OO_J)\subseteq\cc_J$, then 
\[(f\cdot g)(z) = f(z)g(z)\]
for all $z \in \OO_J$. Moreover, if $\{1,J, J_1, J J_1,\ldots, J_h, JJ_h\}$ is a splitting basis of $A$ associated with $J$, if $J_0:=1$ and if $\{g_k:\OO_J \lra \cc_J\}_{k=0}^h$ are such that $g_{|_{\OO_J}}=\sum_{k=0}^h g_k J_k$, then
\[(f\cdot g)(z) = \sum_{k=0}^h (f(z) g_k(z)) J_k\]
for all $z \in \OO_J$.
\end{lemma}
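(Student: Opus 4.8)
The plan is to reduce everything to the stem-function description of the slice product and to the single key observation that the hypothesis $f(\OO_J)\subseteq\cc_J$ forces \emph{both} components of the stem function of $f$ to be $\cc_J$-valued along the relevant slice. Write $f=\I(F)$ with $F=F_1+\ui F_2$ and $g=\I(G)$ with $G=G_1+\ui G_2$. For $z=\alpha+\beta J\in\OO_J$, setting $\zeta=\alpha+i\beta$, Definition~\ref{def:slice-function} gives $f(z)=F_1(\zeta)+JF_2(\zeta)$ and, since $\OO_J$ is preserved by $z\mapsto z^c$ and $F$ is a stem function, $f(z^c)=F_1(\zeta)-JF_2(\zeta)$. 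Hence $F_1(\zeta)=\frac12(f(z)+f(z^c))$ and $JF_2(\zeta)=\frac12(f(z)-f(z^c))$. As both $z,z^c\in\OO_J$, the hypothesis yields $f(z),f(z^c)\in\cc_J$, so $F_1(\zeta)\in\cc_J$ and $JF_2(\zeta)\in\cc_J$, whence also $F_2(\zeta)=-J\,(JF_2(\zeta))\in\cc_J$.

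Next I would record the elementary multiplicative identities that hold for any $c\in\cc_J$ and any $b\in A$, namely $c(Jb)=J(cb)=(Jc)b$ and $(Jc)(Jb)=-cb$. These follow from Artin's theorem together with the alternating property of the associator: writing $c=p+qJ$ with $p,q\in\rr$, each associator $(c,J,b)$, $(J,c,b)$ or $(Jc,J,b)$ expands $\rr$-bilinearly into associators having either the entry $1$ (which lies in the nucleus) or a repeated entry $J$, and therefore vanishes. Combining these vanishings with $J^2=-1$ and $cJ=Jc$ gives the stated identities.

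Now I would compute both sides on $\OO_J$. Using $f\cdot g=\I(FG)$ and the product of $A_\cc$, one has
\[
(f\cdot g)(z)=\big(F_1G_1-F_2G_2\big)+J\big(F_1G_2+F_2G_1\big),
\]
with all components evaluated at $\zeta$. On the other hand,
\[
f(z)g(z)=(F_1+JF_2)(G_1+JG_2)=F_1G_1+F_1(JG_2)+(JF_2)G_1+(JF_2)(JG_2).
\]
Applying the identities of the previous step with $c=F_1,F_2\in\cc_J$ gives $F_1(JG_2)=J(F_1G_2)$, $(JF_2)G_1=J(F_2G_1)$ and $(JF_2)(JG_2)=-F_2G_2$; substituting these turns the second expression into the first, proving $(f\cdot g)(z)=f(z)g(z)$.

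Finally, for the displayed splitting formula I would use the decomposition $g_{|_{\OO_J}}=\sum_{k=0}^h g_kJ_k$ with each $g_k:\OO_J\to\cc_J$. Since $f(z),g_k(z)\in\cc_J$, the associator $(f(z),g_k(z),J_k)$ vanishes (again because its $\rr$-bilinear expansion only produces associators containing $1$ or a repeated $J$), so $f(z)(g_k(z)J_k)=(f(z)g_k(z))J_k$; summing over $k$ and invoking the first part gives $(f\cdot g)(z)=f(z)g(z)=\sum_{k=0}^h(f(z)g_k(z))J_k$. The only genuinely substantial step is the first one: recognizing that $f(\OO_J)\subseteq\cc_J$ upgrades, via conjugation invariance of the slice, to $F_1,F_2$ being $\cc_J$-valued. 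Once this is in hand, everything else is bookkeeping with the alternating associator.
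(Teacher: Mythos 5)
Your proof is correct, but it takes a genuinely different route from the paper's. For the first equality the paper never returns to stem functions: it quotes the pointwise product formula
\[
(f\cdot g)(x) = f(x)\,\vs g(x)+\im(x)\big(f(x)g'_s(x)\big) - (\im(x), f'_s(x), g(x^c))
\]
from \cite[Theorem 3.4]{gpsalgebra}, observes that $\im(z)$ and $f'_s(z)$ lie in $\cc_J$ so that the associator term vanishes by Artin's theorem, and then rearranges $\im(z)\big(f(z)g'_s(z)\big)=f(z)\big(\im(z)g'_s(z)\big)$ to reassemble $f(z)g(z)$. You instead argue directly from the definition $f\cdot g=\I(FG)$: your key step, that $f(\OO_J)\subseteq\cc_J$ upgrades (via conjugation-invariance of $\OO_J$) to both stem components $F_1,F_2$ being $\cc_J$-valued, is precisely the stem-function counterpart of the paper's brief claim that $\vs f(z), f'_s(z)\in\cc_J$ --- indeed $\vs f(z)=F_1(\zeta)$ and $f'_s(z)$ is a real multiple of $F_2(\zeta)$ --- and your hand-derived identities $c(Jb)=J(cb)=(Jc)b$ and $(Jc)(Jb)=-cb$ for $c\in\cc_J$, $b\in A$ replace the cited formula. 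The trade-off is clear: the paper's argument is shorter given the machinery it is entitled to cite, while yours is self-contained and makes the nonassociative bookkeeping completely explicit; both ultimately rest on the same two facts, namely the $\cc_J$-valuedness upgrade and alternativity. For the second displayed formula the two proofs coincide: expand $g$ in the splitting basis and kill the associators $(f(z),g_k(z),J_k)$, which you do by an explicit bilinear expansion into associators containing $1$ or a repeated $J$ --- an inline proof of exactly the instance of Artin's theorem that the paper invokes.
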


\begin{proof}
We begin by establishing the first equality. According to~\cite[Theorem 3.4]{gpsalgebra},
\[(f\cdot g)(x) = f(x) \vs  g(x)+\im(x) \big(f(x)  g'_s(x)\big) - (\im(x),  f'_s(x),g(x^c)).\]
Under the hypothesis $f(\OO_J)\subseteq \cc_J$, for all $z \in \cc_J$ we have that $\im(z),f'_s(z) \in \cc_J$, whence $(\im(z), f'_s(z),g(z^c))=0$ by Artin's theorem~\cite[Theorem 3.1]{schafer}. Moreover, since $f(z) \in \cc_J$,
\[\im(z) \big(f(z)  g'_s(z)\big) = \big(\im(z) f(z) \big) g'_s(z) = \big(f(z) \im(z)\big) g'_s(z) =  f(z) \big(\im(z) g'_s(z) \big).\]
Thus,
\[(f\cdot g)(z) = f(z) \vs g(z)+f(z) \big(\im(z) g'_s(z) \big) = f(z) (\vs g(z) + \im(z) g'_s(z)) = f(z) g(z),\]
which is precisely the first equality.

As for the second one, if $g(z)=\sum_{k=0}^h g_k(z) J_k$ in $\OO_J$, then
\[(f\cdot g)(z) = f(z) g(z) = \sum_{k=0}^h f(z) (g_k(z) J_k) = \sum_{k=0}^h (f(z) g_k(z)) J_k\]
where $(f(z),g_k(z),J_k)=0$ by a further application of Artin's theorem.
\end{proof}

\begin{example}
Over $\oo$, the functions $(x+i)^{\punto 2}\cdot l$ and $(x+i)^{\punto 2} l$ do not coincide identically, although they coincide in $\cc_i$. By the Representation Formula~\eqref{rep2}, this implies that $(x+i)^{\punto 2} l$ is not a slice function (whence not a slice regular function).
\end{example}

\begin{definition}
Let $y \in \cc_J\subseteq Q_A$. For all $R,R_1,R_2 \in [0, +\infty]$ with $R_1<R_2$, we set 
\[B_J (y,R):=\{z \in \cc_J \, | \, |z-y|<R\},\]
\[A_J (y,R_1,R_2):=\{z \in \cc_J \, | \, R_1<|z-y|<R_2\}.\]
Furthermore, we let $\OO(y,R)$ denote the circular set $\OO\subseteq Q_A$ such that 
\[\OO_J = B_J (y,R) \cap B_J (y^c,R);\] 
the latter being the largest subset of $B_J (y,R)$ preserved by $z \mapsto z^c$. Similarly, we let $\OO(y,R_1,R_2)$ denote the circular subset of $Q_A$ whose $J$-slice is $A_J (y,R_1,R_2) \cap A_J (y^c,R_1,R_2)$.
\end{definition}

We are now ready to study the convergence of Laurent series.

\begin{theorem}\label{thm:Abel-power}
Let $y \in \cc_J\subseteq Q_A$ and let $\{a_n\}_{n \in \zz}$ be a sequence in $A$. Set
\begin{center}
$R_1 = \limsup_{m \to +\infty} \|a_{-m}\|_A^{1/m} \;$ and $\; 1/R_2 = \limsup_{n \to +\infty} \|a_n\|_A^{1/n}$.
\end{center}
If $R_1<R_2$, then the Laurent series 
\begin{equation}\label{eq:Abel-power}
\mr{P}(x) = \sum_{n \in \zz} (x-y)^{\punto n}\cdot a_n
\end{equation}
converges totally on every compact subset of $A_J(y,R_1,R_2)$ and it does not converge at any point $x \in \cc_J \setminus \overline{A_J(y,R_1,R_2)}$. If there exists $n<0$ such that $a_n \neq 0$, then $\OO(y,R_1,R_2)$ is its domain of convergence (with $R_1=0$ if there are finitely many such $n$'s). If, on the other hand, $\mr{P}(x)$ is a power series, then $\OO(y,R_2)$ is its domain of convergence. Finally, if the domain of convergence $\OO$ of $\mr{P}(x)$ is not empty, then the sum is a slice regular function $\mr{P} \in \sr(\OO)$.
\end{theorem}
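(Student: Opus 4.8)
The plan is to reduce the convergence statement to a one-dimensional (complex) question on a single slice $\cc_J$, apply the classical Abel/Cauchy--Hadamard theory there, and then transfer the conclusions back to the full circular set using the machinery of Section~\ref{sec:series}. First I would fix the slice $\cc_J$ containing $y$ and examine the restriction of $\mr{P}(x)$ to $\cc_J$. By Lemma~\ref{productpreservedslice}, since the slice preserving powers $(x-y)^{\punto n}$ restrict to the ordinary powers $(z-y)^n$ on $\cc_J$ (here $y\in\cc_J$, so $(x-y)^{\punto n}$ is induced by a stem function whose value on $\cc_J$ is the genuine complex power), the restriction $\mr{P}_{|_{\OO_J}}$ becomes an honest two-sided complex Laurent series $\sum_{n\in\zz}(z-y)^n a_n$ with coefficients in $A$. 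Splitting $A$ along a splitting basis $\{1,J,J_1,\ldots,JJ_h\}$ associated with $J$ turns this into finitely many scalar complex Laurent series, to each of which the classical Cauchy--Hadamard theorem applies. The quantities $R_1=\limsup_m\|a_{-m}\|_A^{1/m}$ and $1/R_2=\limsup_n\|a_n\|_A^{1/n}$ are exactly the classical inner and outer radii, so the series converges absolutely and uniformly on compact subsets of the annulus $A_J(y,R_1,R_2)$ and diverges outside its closure; the equivalence of the norms (all finite-dimensional) makes $\|\cdot\|_A$ interchangeable with the modulus on each scalar component.

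Next I would upgrade this slicewise convergence to total convergence on circular compact sets, and this is where Section~\ref{sec:series} does the real work. Given a compact $K\subseteq A_J(y,R_1,R_2)$, total convergence of $\sum_n f_n$ (with $f_n(x):=(x-y)^{\punto n}\cdot a_n$) on the circularization $\OO_K$ follows from Theorem~\ref{totalcircular}: it suffices to verify total convergence on the single slice $K$, i.e. assertion {\it 2} implies assertion {\it 1}. The slicewise estimate just established supplies exactly this. Thus $\mr{P}$ converges totally on every circular compact subset of $A_J(y,R_1,R_2)$, and by Remark~\ref{regularsum} the sum is a continuous slice function there; once the domain of convergence is identified as nonempty and open, the same remark yields slice regularity, since each summand $f_n$ is slice regular (being a slice product of slice regular functions).

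The remaining task is the precise identification of the domain of convergence, and I expect the bookkeeping of the circular geometry to be the main obstacle rather than any deep analytic point. The subtlety is that $\OO(y,R_1,R_2)$ is defined slicewise as $A_J(y,R_1,R_2)\cap A_J(y^c,R_1,R_2)$, the largest conjugation-symmetric subannulus, because a circular set must be preserved by $z\mapsto z^c$; one cannot simply circularize the annulus around $y$ alone. I would argue via Corollary~\ref{domainconvergence} (assertion {\it 2}), checking total convergence on compact subsets of $\OO'_J$ and exhibiting divergence just outside. The three cases require separate attention: when infinitely many negative coefficients are nonzero, $R_1$ is a genuine positive inner radius and the domain is the circular annulus $\OO(y,R_1,R_2)$; when only finitely many negative terms survive, the tail is entire in $(x-y)^{-\punto 1}$ so $R_1=0$ and the domain is still an annulus with inner radius $0$; and when $\mr{P}$ is a power series, the negative part is absent and the domain is the circular ball $\OO(y,R_2)$. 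In each case divergence outside $\overline{A_J(y,R_1,R_2)}$ on the slice, combined with the observation that the full series diverges at any point of a sphere as soon as it diverges at one point of that sphere (Representation Formula~\eqref{rep1}), forces the maximality condition defining the domain of convergence. Assembling these cases and confirming that the stated $\OO(y,\cdot)$ sets exactly match the slicewise annuli of convergence completes the proof.
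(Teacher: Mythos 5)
Your proposal is correct, and its skeleton coincides with the paper's proof: restrict to the slice $\cc_J$ via Lemma~\ref{productpreservedslice}, do the convergence/divergence analysis there, then identify the circular domain of convergence with Corollary~\ref{domainconvergence} and get slice regularity from Remark~\ref{regularsum}. The one genuine difference is the slicewise analysis itself. The paper never splits $A$ into $\cc_J$-components at this point: it applies the root criterion directly to the $A$-valued terms, using the two-sided estimates of Remark~\ref{multiplicativeestimates},
\[
c_A\,|z-y|^n\,\|a_n\|_A \;\leq\; \|(z-y)^n a_n\|_A \;\leq\; C_A\,|z-y|^n\,\|a_n\|_A,
\]
available because $(z-y)^n \in \cc_J \subseteq Q_A$ (Assumption (F)). This makes $R_1$ and $R_2$ appear immediately as the Cauchy--Hadamard radii of the scalar series $\sum_n |z-y|^n\|a_n\|_A$, with divergence outside $\overline{A_J(y,R_1,R_2)}$ coming from the lower bound. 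Your route through a splitting basis and componentwise classical Cauchy--Hadamard is also valid, but it leaves a small step implicit: one must check that the componentwise radii recombine into the stated $R_1,R_2$, i.e.\ that $\limsup_n\|a_n\|_A^{1/n}=\max_k \limsup_n |a_{n,k}|^{1/n}$ (true by equivalence of norms on the finite-dimensional $A$ and finiteness of the number of components), and that divergence of the $A$-valued series outside the closed annulus follows because some component attains the extremal radius. These points are easy but should be stated; the paper's direct estimate avoids them altogether. Finally, your appeal to the Representation Formula~\eqref{rep1} (``divergence at one point of a sphere forces divergence on the whole sphere'') is both unnecessary and imprecise: Corollary~\ref{domainconvergence} only requires exhibiting a divergence point on the slice $\cc_J$, which your slicewise analysis already supplies, and \eqref{rep1} needs $J-K$ invertible, so in a general algebra it does not yield the sphere-wide divergence you claim. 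Better to drop that remark.
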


\begin{proof}
Let $f_n(x) := (x-y)^{\punto n}\cdot a_n$ on $Q_A\setminus\s_y$. By Lemma~\ref{productpreservedslice}, $f_n(z) = (z-y)^n a_n$ for all $z \in \cc_J \setminus\{y,y^c\}$. Remark~\ref{multiplicativeestimates} implies that
\[\|(z-y)^n a_n\|_A \leq C_A |z-y|^n \|a_n\|_A\,,\]
\[\|(z-y)^n a_n\|_A \geq c_A |z-y|^n \|a_n\|_A\,.\]
By the root criterion, $\mr{P}(z)=\sum_{n \in \zz} f_n(z)$ converges totally on the compact subsets of $A_J (y,R_1,R_2)$ and it does not converge at any point of $\cc_J \setminus \overline{A_J(y,R_1,R_2)}$.

In case $\mr{P}(z)$ is a power series rather than a genuine Laurent series, the same technique proves that it converges totally on the compact subsets of $B_J(y,R_2)$ and it does not converge at any point of $\cc_J \setminus \overline{B_J(y,R_2)}$.

By Corollary~\ref{domainconvergence}, the domain of convergence $\OO$ of $\mr{P}(x)$ is either $\OO(y,R_1,R_2)$ or $\OO(y,R_2)$ depending on whether $\mr{P}(x)$ is a genuine Laurent series or a power series.
Finally, if $\OO \neq \emptyset$, then the sum is a slice regular function $P \in \sr(\OO)$ by Remark~\ref{regularsum}.
\end{proof}

We adopted here a new approach, which differs significantly from the techniques of~\cite{powerseries,expansionsalgebras,singularities}. In those papers, convergence was studied by estimating $\|(x-y)^{\punto n}\|_A$ in terms of a distance $\sigma_A$ and a pseudodistance $\tau_A$. Let us recall the definition of these functions.

\begin{definition}
We define the functions $\sigma_A,\tau_A:Q_A \times Q_A \lra \rr$ by setting
\begin{eqnarray}
&\sigma_A(x,y):=& \left\{
\begin{array}{ll}
|x-y|  \mathrm{\ if\ } x,y \mathrm{\ lie\ in\ the\ same\ } \cc_J \vspace{.3em}\\
\sqrt{\left(\re(x)-\re(y)\right)^2 + \left(|\im(x)| + |\im(y)|\right)^2}  \mathrm{\ otherwise}
\end{array}
\right. \ ,\\
&& \nonumber\\
&\tau_A(x,y):=& \left\{
\begin{array}{l}
|x-y|  \mathrm{\ if\ } x,y \mathrm{\ lie\ in\ the\ same\ } \cc_J \vspace{.3em}\\
\sqrt{\left(\re(x)-\re(y)\right)^2 + \left(|\im(x)| - |\im(y)|\right)^2}   \mathrm{\ otherwise}
\end{array}
\right.\ .
\end{eqnarray}
\end{definition}

If we set
\begin{align*}
\Sigma(y,R) &:= \{x \in Q_A \, | \, \sigma_A(x,y)<R\},\\
\Sigma(y,R_1,R_2)&:=\{x \in Q_A \, | \, \tau_A(x,y)>R_1, \sigma_A(x,y) <R_2\}.
\end{align*}
then the next remark proves that our new result is consistent with those previous works.

\begin{remark}
The equalities
\begin{align*}
\Sigma(y,R) &= \OO(y,R) \cup B_J(y,R),\\
\Sigma(y,R_1,R_2) &= \OO(y,R_1,R_2) \cup A_J(y,R_1,R_2)
\end{align*}
hold.
\end{remark}

This allows to call the quantities $R_1$ and $R_2$ appearing in Theorem~\ref{thm:Abel-power} the \emph{inner} and \emph{outer radius of convergence} of the Laurent series \eqref{eq:Abel-power}, respectively.
On the other hand, our new approach spared us many lengthy computations and a detailed study of the topology of $\Sigma(y,R_1,R_2)$. Indeed, $\sigma_A$ and $\tau_A$ are only lower (resp., upper) semicontinuous in the Euclidean topology of $A$.

We now come to Laurent series expansions for slice regular functions.

\begin{theorem}\label{Laurent}
Let $f: \OO \lra A$ be a slice regular function. Suppose that $y \in Q_A$ and $R_1,R_2 \in [0,+\infty]$ are such that $R_1<R_2$ and $\Sigma(y,R_1,R_2) \subseteq \OO$. Then there exists a (unique) sequence $\{a_n\}_{n \in \zz}$ in $A$ such that
\begin{equation}\label{laurentformula}
f(x) = \sum_{n \in \zz} (x-y)^{\punto n}\cdot a_n
\end{equation}
in $\Sigma(y,R_1,R_2)$. If, moreover, $\Sigma(y,R_2)\subseteq \OO$, then for all $n<0$ we have $a_n = 0$ and formula~\eqref{laurentformula} holds in $\Sigma(y,R_2)$.
\end{theorem}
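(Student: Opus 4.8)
The plan is to reduce the problem on the quadratic cone to a classical one-complex-variable statement on a single slice $\cc_J$, where the theory of Laurent expansions of holomorphic functions is available, and then to lift the resulting complex coefficients back to $A$ using the splitting basis. Concretely, fix $J \in \s_A$ with $y \in \cc_J$ and a splitting basis $\{1,J,J_1,JJ_1,\ldots,J_h,JJ_h\}$. By Lemma~\ref{splitting}, the restriction $f_{|_{\OO_J}}$ decomposes as $\sum_{\ell=0}^h f_\ell J_\ell$ with each $f_\ell : \OO_J \to \cc_J$ holomorphic. Since $\Sigma(y,R_1,R_2) \subseteq \OO$, the previous remark gives $\Sigma(y,R_1,R_2)_J = A_J(y,R_1,R_2)$, so each $f_\ell$ is holomorphic on the annulus $A_J(y,R_1,R_2)$ and admits a classical complex Laurent expansion $f_\ell(z) = \sum_{n\in\zz}(z-y)^n c_{\ell,n}$ with $c_{\ell,n}\in\cc_J$, converging on compact subsets of the annulus.

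Next I would assemble these into $A$-valued coefficients by setting $a_n := \sum_{\ell=0}^h c_{\ell,n} J_\ell \in A$. The goal is to verify that the Laurent series $\sum_{n\in\zz}(x-y)^{\punto n}\cdot a_n$ is a well-defined slice regular function whose domain of convergence contains $\Sigma(y,R_1,R_2)$, and that it agrees with $f$ there. For convergence, I would apply Theorem~\ref{thm:Abel-power}: the complex root-test radii of the $f_\ell$ control $\|a_n\|_A$ up to the multiplicative constant $C_A$ from Remark~\ref{multiplicativeestimates}, so the inner and outer radii of the $A$-valued series are at least $R_1$ and at most $R_2$ in the right sense, guaranteeing total convergence on compacta of $A_J(y,R_1,R_2)$ and hence, by Theorem~\ref{thm:Abel-power}, a slice regular sum $\mr{P}$ on a circular domain containing $\Sigma(y,R_1,R_2)$. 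To check that $\mr{P}=f$, I would evaluate on $\cc_J$: by Lemma~\ref{productpreservedslice}, $(x-y)^{\punto n}\cdot a_n$ restricts on $\cc_J$ to $\sum_\ell \big((z-y)^n c_{\ell,n}\big)J_\ell$, so $\mr{P}_{|_{\OO_J}} = \sum_\ell f_\ell J_\ell = f_{|_{\OO_J}}$. Two slice regular functions agreeing on a single slice $\OO_J$ must coincide on the whole circular domain by the Representation Formula~\eqref{rep2}, so $\mr{P}=f$ on $\Sigma(y,R_1,R_2)$.

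For uniqueness of $\{a_n\}$, I would again restrict to $\cc_J$ and use Lemma~\ref{productpreservedslice} to turn the slice identity into the componentwise complex identities $f_\ell(z)=\sum_n (z-y)^n c_{\ell,n}$; uniqueness of the classical complex Laurent coefficients $c_{\ell,n}$, together with the fact that the splitting basis is an $\rr$-basis of $A$, forces the $a_n=\sum_\ell c_{\ell,n}J_\ell$ to be unique. Finally, the power-series case is the statement that $\Sigma(y,R_2)\subseteq\OO$ implies $a_n=0$ for $n<0$: here $f_\ell$ is holomorphic on the full disk $B_J(y,R_2)$ (not merely the annulus), so its complex Laurent expansion has no negative-index terms, whence $c_{\ell,n}=0$ for $n<0$ and therefore $a_n=0$; Theorem~\ref{thm:Abel-power} then upgrades the domain of convergence to $\OO(y,R_2)$ and the expansion holds on $\Sigma(y,R_2)$.

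I expect the main obstacle to be bookkeeping rather than a single deep step: one must be careful that the circular set swept out by the $A$-valued series genuinely contains $\Sigma(y,R_1,R_2)$ and not merely its $J$-slice, which is exactly why the identification $\Sigma(y,R_1,R_2)_J = A_J(y,R_1,R_2)$ from the preceding remark and the slice-extension via~\eqref{rep2} are essential. The subtlety is that $\sigma_A$ and $\tau_A$ are only semicontinuous, so the domain $\Sigma(y,R_1,R_2)$ need not be open; however, since we are given $\Sigma(y,R_1,R_2)\subseteq\OO$ as a hypothesis and Theorem~\ref{thm:Abel-power} already furnishes an open circular domain of convergence, the coincidence $\mr{P}=f$ propagates to all of $\Sigma(y,R_1,R_2)$ by restriction, so the semicontinuity issue does not actually impede the argument.
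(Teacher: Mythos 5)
Your proposal is correct and follows essentially the same route as the paper's proof: decompose $f$ on the slice $\cc_J$ into holomorphic components via the splitting basis, expand each component into a classical complex Laurent series, assemble the coefficients $a_n=\sum_\ell c_{\ell,n}J_\ell$, invoke Theorem~\ref{thm:Abel-power} to get convergence of the $A$-valued series on the circular set $\OO(y,R_1,R_2)$, and use the Representation Formula~\eqref{rep2} to upgrade slice-wise agreement to agreement on $\Sigma(y,R_1,R_2)=A_J(y,R_1,R_2)\cup\OO(y,R_1,R_2)$. The only slip is your assertion that the circular domain of convergence contains $\Sigma(y,R_1,R_2)$ — in general it need only contain $\OO(y,R_1,R_2)$, since the slice annulus $A_J(y,R_1,R_2)$ is not circular and equality there holds directly from the complex expansions — but your argument does not actually rely on that claim, so this is a wording issue rather than a gap.
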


\begin{proof}
Since $\OO \supseteq \Sigma(y,R_1,R_2)$, we have that
\[
\OO_J \supseteq A_J(y,R_1,R_2).
\]
Choose $J_1,\ldots,J_h \in \s_A$ in such a way that $\{1,J, J_1, J J_1,\ldots, J_h, JJ_h\}$ is a splitting basis of $A$ associated with $J$ and let $\{f_k:\OO_J \lra \cc_J\}_{k=0}^h$ be the holomorphic functions such that $f_{|_{\OO_J}}=\sum_{k=0}^h f_k J_k$, where $J_0$ stands for $1$. If
\begin{equation*}
f_k(z) = \sum_{n \in \zz} (z-y)^n a_{n,k}
\end{equation*}
is the Laurent series expansions of $f_k$ in $A_J(y,R_1,R_2)$, where $a_{n,k} \in \cc_J$ for all $n \in \zz$, then for all $z \in A_J(y,R_1,R_2)$
\begin{equation*}
f(z)=\sum_{k=0}^h f_k(z) J_k=\sum_{k=0}^h \sum_{n \in \zz} \left((z-y)^n a_{n,k}\right) J_k=\sum_{n \in \zz} \sum_{k=0}^h(z-y)^n  \left(a_{n,k} J_k\right)=\sum_{n \in \zz} (z-y)^n a_n
\end{equation*}
where $a_n := \sum_{k=0}^h a_{n,k} J_k$. Since $\sum_{n \in \zz} (z-y)^n a_n$ converges in $A_J(y,R_1,R_2)$, by Theorem~\ref{thm:Abel-power} we must have $\limsup_{m \to +\infty} \|a_{-m}\|_A^{1/m} \leq R_1$ and $\limsup_{n \to +\infty} \|a_n\|_A^{1/n} \leq1/R_2$. As a consequence, the domain of convergence of the series in equation \eqref{laurentformula} includes $\OO(y,R_1,R_2)$. 

If $\OO(y,R_1,R_2) \neq \emptyset$, then the series in equation \eqref{laurentformula} defines a slice regular function $g: \OO(y,R_1,R_2) \lra A.$ Since $g$ coincides with $f$ in $\OO(y,R_1,R_2) \cap \cc_J$, by Formula \eqref{rep2}, it must coincide with $f$ in $\OO(y,R_1,R_2)$.

Hence equality \eqref{laurentformula} holds throughout $\Sigma(y,R_1,R_2) = A_J(y,R_1,R_2) \cup \OO(y,R_1,R_2)$, as desired.

Finally, if $\Sigma(y,R_2)\subseteq \OO$, then the slice regularity of $f$ near $y$ implies that every component $f_k$ is holomorphic near $y$ and the same steps lead to an expansion of the form $f(x) = \sum_{n \in \nn} (x-y)^{\punto n}\cdot a_n$ (with non negative exponents) valid in $\Sigma(y,R_2)$.
\end{proof}

\begin{example}\label{ex:expansion}
Let $J \in \s_A$ and let $f:Q_A \setminus \s_A \to A$ be defined by
\begin{equation*}
f(x) = (x+J)^{-\punto } = (x^2+1)^{-1}(x-J).
\end{equation*}
For all $z \in \cc_J \setminus \{\pm J\}$, we have $f(z) = (z+J)^{-1}$. Therefore, 
\[f(z) = \sum_{n \in \nn} (-1)^n(z-J)^n(2J)^{-n-1}\]
in $A_J(J,0,2)$ and
\[f(x) = \sum_{n \in \nn} (-1)^n(x-J)^{\punto n}(2J)^{-n-1}\]
in $\Sigma(J,0,2)$.
\end{example}

As in the complex case, the coefficients in expansion~\eqref{laurentformula} can be computed by means of line integral formulae. Let us first clarify the notation used for line integrals in the setting of our alternative algebra $A$. 

\begin{definition}\label{lineintegral}
Let $J \in \s_A$, let $\gamma:[0,1] \lra \cc_J$ be a piecewise-$\mathscr{C}^1$ curve and let $f:|\gamma| \lra A$ be a continuous function defined on the support $|\gamma|=\gamma([0,1])$ of $\gamma$. Fix a splitting basis $\{1,J,J_1,JJ_1,\ldots,J_h,JJ_h\}$ of $A$ associated with $J$ and denote by $\{f_k:|\gamma| \lra \cc_J\}_{k=0}^h$ the functions such that $f=\sum_{k=0}^hf_kJ_k$ on $|\gamma|$, where $J_0:=1$. Then we define
\[
\int_{\gamma}d\zeta f(\zeta):=\sum_{k=0}^h \left( \int_{\gamma} f_k(\zeta) d\zeta \right) J_k.
\]
\end{definition}
The definition is well-posed, i.e., for $J$ fixed it does not depend on the choice of the associated splitting basis of $A$. 

\begin{proposition}
In the hypotheses of Theorem~\ref{Laurent}, let us consider any circle $\gamma:[0,1] \lra \cc_J$ defined by $s \mapsto y+ re^{2\pi Js}$ for some $r \in (R_1,R_2)$. Then
\begin{eqnarray} \label{eq:a_n}
a_n&=&(2 \pi J)^{-1}\int_{\gamma} d\zeta \, (\zeta-y)^{-n-1} f(\zeta)\\
&=& r^{-n} \int_0^1 e^{-2\pi nJ s}\, f(y+ re^{2\pi Js})\,ds \notag
\end{eqnarray}
for all $n \in \zz$.
\end{proposition}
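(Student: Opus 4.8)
The plan is to reduce the computation of the coefficients $a_n$ to the classical Cauchy-type integral formula for the complex Laurent coefficients of the holomorphic components $f_k$, exploiting the splitting basis. First I would fix a splitting basis $\{1,J,J_1,JJ_1,\ldots,J_h,JJ_h\}$ associated with $J$ and write $f_{|_{\OO_J}}=\sum_{k=0}^h f_k J_k$ as in Lemma~\ref{splitting}, so that each $f_k:\OO_J\to\cc_J$ is holomorphic. In the proof of Theorem~\ref{Laurent} we saw that the coefficient $a_n$ decomposes as $a_n=\sum_{k=0}^h a_{n,k}J_k$, where $a_{n,k}\in\cc_J$ is the $n$-th Laurent coefficient of $f_k$ in the annulus $A_J(y,R_1,R_2)$. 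Since each $f_k$ is an ordinary $\cc_J$-valued holomorphic function on that annulus, the classical one-variable theory gives
\[
a_{n,k}=(2\pi J)^{-1}\int_\gamma (\zeta-y)^{-n-1}f_k(\zeta)\,d\zeta
\]
for the circle $\gamma(s)=y+re^{2\pi Js}$ with $r\in(R_1,R_2)$, the element $J$ playing the role of the imaginary unit $i$ in $\cc_J$.

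The second step is to reassemble these scalar formulae into the single expression of the statement. Multiplying each $a_{n,k}$ by $J_k$ and summing over $k$, and recalling Definition~\ref{lineintegral} of the line integral $\int_\gamma d\zeta\, g(\zeta)=\sum_{k=0}^h\big(\int_\gamma g_k(\zeta)\,d\zeta\big)J_k$, I would identify
\[
a_n=\sum_{k=0}^h a_{n,k}J_k=(2\pi J)^{-1}\int_\gamma d\zeta\,(\zeta-y)^{-n-1}f(\zeta),
\]
provided one checks that the integrand's $k$-th component relative to the splitting basis is exactly $(\zeta-y)^{-n-1}f_k(\zeta)$. This is where a small amount of care is needed: the factor $(\zeta-y)^{-n-1}$ lies in $\cc_J$ and commutes appropriately, and by Artin's theorem (associativity of the subalgebra generated by any two elements) one has $(\zeta-y)^{-n-1}(f_k(\zeta)J_k)=\big((\zeta-y)^{-n-1}f_k(\zeta)\big)J_k$, so the components of $(\zeta-y)^{-n-1}f(\zeta)$ in the basis are precisely $(\zeta-y)^{-n-1}f_k(\zeta)$. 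The prefactor $(2\pi J)^{-1}$ is real-times-$J^{-1}$ and likewise passes through the sum without disturbing the basis decomposition.

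Finally I would derive the second displayed equality by the standard parametrization of the circle. Substituting $\zeta=y+re^{2\pi Js}$, so that $d\zeta=2\pi J\,re^{2\pi Js}\,ds$ and $(\zeta-y)^{-n-1}=r^{-n-1}e^{-2\pi(n+1)Js}$, the prefactor $(2\pi J)^{-1}$ cancels the $2\pi J$ from $d\zeta$, leaving
\[
a_n=r^{-n}\int_0^1 e^{-2\pi nJs}\,f\big(y+re^{2\pi Js}\big)\,ds,
\]
where the powers of $e^{2\pi Js}$ combine as in the complex case because they all lie in the commutative field $\cc_J$. The main obstacle, though a mild one, is justifying that noncommutativity and nonassociativity of $A$ do not interfere: everything must be organized so that the genuinely $A$-valued quantities appear only as the fixed basis elements $J_k$ on the right, while all manipulations involving $\zeta-y$, $J$, and the exponentials take place inside the associative-commutative slice $\cc_J$. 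Once this bookkeeping is set up via the splitting basis and Definition~\ref{lineintegral}, the argument reduces cleanly to the classical scalar Cauchy formula applied componentwise.
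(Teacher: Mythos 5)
Your proposal is correct and follows essentially the same route as the paper: decompose $f$ via a splitting basis, apply the classical Cauchy--Laurent coefficient formula to each holomorphic component $f_k$, and reassemble through Definition~\ref{lineintegral}, noting that the components of $(\zeta-y)^{-n-1}f(\zeta)$ are exactly $(\zeta-y)^{-n-1}f_k(\zeta)$. The only cosmetic difference is that the paper obtains this last componentwise identification by applying Lemma~\ref{productpreservedslice} twice to the slice product $(x-y)^{-\punto(n+1)}\cdot f(x)$, whereas you invoke Artin's theorem directly to kill the associator $\bigl((\zeta-y)^{-n-1},f_k(\zeta),J_k\bigr)$ --- which is the same argument, since that lemma is itself proved via Artin's theorem.
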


\begin{proof}
Let $\{1,J,J_1,JJ_1,\ldots,J_h,JJ_h\}$ be a splitting basis of $A$ associated with $J$ and let $J_0:=1$. By direct inspection in the proof of Theorem~\ref{Laurent}, $a_n = \sum_{k=0}^h a_{n,k} J_k$ where $\{a_{n,k}\}_{n \in \zz} \subseteq \cc_J$ is the sequence of the coefficients of the Laurent expansion of the holomorphic function $f_k:\OO_J \to \cc_J$ at $y$. The equality
\[
\frac1{2 \pi J}\int_{\gamma} \frac{f_k(\zeta)}{(\zeta-y)^{n+1}}d\zeta=a_{n,k}\,,
\]
is a well-known fact in the theory of holomorphic functions of one complex variable. Now let $g(x):= (x-y)^{-\punto(n+1)}\cdot f(x)$. By Lemma~\ref{productpreservedslice},
\[
g(\zeta) = \sum_{k=0}^h ((\zeta-y)^{-n-1} f_k(\zeta))J_k
\]
for all $\zeta \in \cc_J\setminus\{y,y^c\}$. According to Definition~\ref{lineintegral},
\[\frac1{2 \pi J}\int_{\gamma} d\zeta g(\zeta)=  \sum_{k=0}^h \left(\frac1{2 \pi J}\int_{\gamma} \frac{f_k(\zeta)}{(\zeta-y)^{n+1}}d\zeta\right)J_k = \sum_{k=0}^h a_{n,k} J_k = a_n.\] 
Finally, $g(\zeta)=(\zeta-y)^{-n-1} f(\zeta)$ by a further application of Lemma~\ref{productpreservedslice}.
\end{proof}

\begin{corollary}
Suppose that the hypotheses of Theorem~\ref{Laurent} hold. For each $J \in \s_A$, we have $f(A_J(y,R_1,R_2))\subseteq\cc_J$ if, and only if, $a_n \in \cc_J$ for all $n \in \zz$. Moreover, if $y \in \rr$ and if $f$ is slice preserving in $\Sigma(y,R_1,R_2)$, then $a_n \in \rr$ for all $n \in \zz$.
\end{corollary}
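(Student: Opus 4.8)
The plan is to reduce both claims to the corresponding classical statements about the Laurent coefficients $a_{n,k}$ of the complex-holomorphic slice components $f_k$, which were already introduced in the proof of Theorem~\ref{Laurent}. Recall from that proof that, once a splitting basis $\{1,J,J_1,JJ_1,\ldots,J_h,JJ_h\}$ is fixed, we have $a_n = \sum_{k=0}^h a_{n,k}J_k$, where each $f_k$ is holomorphic from $\OO_J$ to $\cc_J$ and $a_{n,k}\in\cc_J$ are its Laurent coefficients. Since $\{1,J,J_1,JJ_1,\ldots,J_h,JJ_h\}$ is a real basis of $A$ and each $J_k$ is $\cc_J$-linearly independent over the splitting, the decomposition $a_n=\sum_k a_{n,k}J_k$ is precisely the expression of $a_n$ in the splitting basis; hence $a_n\in\cc_J$ if and only if $a_{n,k}=0$ for all $k\geq 1$.

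For the first equivalence, I would argue as follows. By Lemma~\ref{productpreservedslice} applied to the slice-preserving (hence $\cc_J$-preserving) function $(x-y)^{\punto n}$, we have $f(z)=\sum_{k=0}^h f_k(z)J_k$ for $z\in\cc_J$, and the condition $f(A_J(y,R_1,R_2))\subseteq\cc_J$ is equivalent to $f_k\equiv 0$ on $A_J(y,R_1,R_2)$ for every $k\in\{1,\ldots,h\}$. On one annulus (or more precisely on the connected slice, using that $A_J(y,R_1,R_2)$ has the relevant connectedness in each half), a holomorphic $\cc_J$-valued function vanishes identically exactly when all its Laurent coefficients vanish, so $f_k\equiv 0$ is equivalent to $a_{n,k}=0$ for all $n$. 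Combining, $f(A_J)\subseteq\cc_J$ holds if and only if $a_{n,k}=0$ for all $n\in\zz$ and all $k\geq 1$, which by the basis decomposition above is equivalent to $a_n\in\cc_J$ for all $n\in\zz$. The ``if'' direction can alternatively be seen directly: if every $a_n\in\cc_J$, then each partial sum $\sum_{|n|\leq N}(x-y)^{\punto n}\cdot a_n$ maps $A_J$ into $\cc_J$ because $(z-y)^n a_n\in\cc_J$ there, and total convergence passes the property to the limit.

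For the second statement, assume $y\in\rr$ and $f$ slice preserving on $\Sigma(y,R_1,R_2)$. Choosing any $J\in\s_A$ with $|\s_A|>2$, sliceness preservation means $f(\OO_J)\subseteq\cc_J$ and, by the $\rr$-valuedness criterion recalled after Definition~\ref{def:slice-function} (cf.~\cite[Proposition~10]{perotti}), the stem components $F_1,F_2$ are $\rr$-valued; equivalently, since $y\in\rr$, the restriction $f_{|_{\OO_J}}$ satisfies $f(z^c)=f(z)^c$ and maps the real axis into $\rr$. The coefficient integral formula~\eqref{eq:a_n}, namely $a_n=r^{-n}\int_0^1 e^{-2\pi nJs}f(y+re^{2\pi Js})\,ds$ with $y,r\in\rr$, then forces $a_n\in\rr$: the symmetry $f(y+re^{-2\pi Js})=f((y+re^{2\pi Js})^c)=f(y+re^{2\pi Js})^c$ together with $(e^{-2\pi nJs})^c=e^{2\pi nJs}$ shows that $a_n^c=a_n$ after the change of variable $s\mapsto -s$, and an element of $\cc_J$ fixed by conjugation is real.

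I expect the main subtlety to be the connectedness issue underlying the vanishing argument in the first part: the annular slice $A_J(y,R_1,R_2)$, intersected with the half-planes, must be handled so that ``all Laurent coefficients vanish iff the function vanishes'' is legitimately applied on a connected set where the Laurent expansion is unique. This is where the structure of $\OO_J=A_J(y,R_1,R_2)\cap A_J(y^c,R_1,R_2)$ and the uniqueness of the coefficients $\{a_{n,k}\}$ from Theorem~\ref{Laurent} are used; the remaining steps are the routine translation between the $A$-valued coefficients $a_n$ and their $\cc_J$-components $a_{n,k}$ via the splitting basis.
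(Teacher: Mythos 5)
Your proof is correct, and it is worth noting that the paper states this corollary without any proof at all: it is meant to follow at once from the coefficient formula \eqref{eq:a_n} established immediately before it. Your part~1 takes the componentwise route (via $a_n=\sum_k a_{n,k}J_k$ and the vanishing of the $f_k$ for $k\geq1$), which is legitimate and uses essentially the same machinery as the proof of Theorem~\ref{Laurent}; the shortest route is even more direct: for ``only if'', formula \eqref{eq:a_n} exhibits $a_n$ as an integral of products of elements of the closed commutative subalgebra $\cc_J$, and for ``if'' your alternative argument (partial sums lie in $\cc_J$ by Lemma~\ref{productpreservedslice}, and $\cc_J$ is closed in $A$) already suffices. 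Also, the ``main subtlety'' you flag is not actually there: the expansion and the uniqueness of the coefficients $a_{n,k}$ live on the full annulus $A_J(y,R_1,R_2)$, which is connected and contained in $\Sigma(y,R_1,R_2)\subseteq\OO$ by hypothesis; the possibly disconnected set $\OO_J(y,R_1,R_2)=A_J(y,R_1,R_2)\cap A_J(y^c,R_1,R_2)$ plays no role in this argument.

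Two remarks on part~2, whose core computation ($a_n^c=a_n$ via Schwarz reflection and the change of variable $s\mapsto 1-s$, together with $a_n\in\cc_J$ from part~1) is correct. First, you have the logic of ``slice preserving'' backwards: in this paper it is \emph{defined} by the $\rr$-valuedness of the stem components $F_1,F_2$ (Definition~\ref{def:slice-function}); the criterion \cite[Proposition~10]{perotti} you invoke is only needed for the converse implication, which you never use. Consequently, the restriction to $|\s_A|>2$ that you impose is both unnecessary and harmful: as written, your argument does not cover algebras with $\s_A$ of cardinality two (e.g.\ $A=\cc$), where the statement still holds and says precisely that intrinsic functions have real Laurent coefficients. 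Deriving the two properties you need, namely $f(\OO_J)\subseteq\cc_J$ and $f(z^c)=f(z)^c$ on $\OO_J$, directly from the definition (cf.\ Remark~\ref{schwarzreflection}) removes the cardinality restriction for every $J\in\s_A$ and leaves the rest of your computation untouched.
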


\begin{corollary}\label{coefficientsareslicefunctions}
Suppose that the hypotheses of Theorem~\ref{Laurent} are fulfilled at a point $y \in Q_A \setminus \rr$, so that, for all $w \in \s_y$, the function $f$ expands at $w$ into a Laurent series with coefficients $a_n(w)$. Then for all $n \in \zz$, the map $w \mapsto a_n(w)$ is a slice function on $\s_y$; equivalently, $a_n : \s_y \to A$ is left affine over $A$.
\end{corollary}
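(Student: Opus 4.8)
The plan is to combine the coefficient integral formula~\eqref{eq:a_n} with the Representation Formula~\eqref{rep2}, so as to express each $a_n(w)$ as an explicit left-affine function of the imaginary unit of $w$. The first point to settle is that the hypotheses of Theorem~\ref{Laurent} hold at \emph{every} $w\in\s_y$, not only at $y$: since $\sigma_A(\cdot,w)$ and $\tau_A(\cdot,w)$ depend on $w$ only through $\re(w)$ and $|\im(w)|$, and both of these are constant on $\s_y$, we have $\Sigma(w,R_1,R_2)=\Sigma(y,R_1,R_2)\subseteq\OO$ for all $w\in\s_y$. Writing $y=\alpha+\beta J$ with $\beta>0$ (without loss of generality), each $w\in\s_y$ equals $\alpha+\beta I$ for a unique $I\in\s_A$, and $w\in\cc_I$; applying~\eqref{eq:a_n} at $w$ on the slice $\cc_I$, for a fixed radius $r\in(R_1,R_2)$, gives
\[
a_n(w)=r^{-n}\int_0^1 e^{-2\pi n I s}\,f\!\left(w+re^{2\pi I s}\right)ds.
\]

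Next I would eliminate the $I$ hidden inside $f$ by passing to the fixed slice $\cc_J$. For each $s$, put $\zeta:=w+re^{2\pi Is}=p+Iq$, where $p=\alpha+r\cos(2\pi s)$ and $q=\beta+r\sin(2\pi s)$ are real and independent of $I$. Then $\zeta\in\Sigma(y,R_1,R_2)\subseteq\OO$, while the reference points $\eta:=p+Jq=y+re^{2\pi Js}$ and $\eta^c=p-Jq=y^c+re^{-2\pi Js}$ lie on the sphere $\s_\zeta$ and hence in $\OO$ by circularity. The Representation Formula~\eqref{rep2}, taken with respect to the fixed unit $J$, then yields
\[
f(\zeta)=\tfrac12\big(f(\eta)+f(\eta^c)\big)-\tfrac{I}{2}\Big(J\big(f(\eta)-f(\eta^c)\big)\Big).
\]
The decisive feature is that $P(s):=\tfrac12(f(\eta)+f(\eta^c))$ and $Q(s):=\tfrac12 J(f(\eta)-f(\eta^c))$ depend on $s$ but carry \emph{no} dependence on $I$.

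Substituting this into the integral and expanding $e^{-2\pi nIs}=\cos(2\pi ns)-I\sin(2\pi ns)$, I would multiply out the integrand $(\cos-I\sin)(P-IQ)$ using only that real scalars are central and the alternativity identity $I(IQ)=-Q$. Collecting the real-coefficient terms turns the integrand into $\big(\cos(2\pi ns)\,P-\sin(2\pi ns)\,Q\big)-I\big(\cos(2\pi ns)\,Q+\sin(2\pi ns)\,P\big)$, whose only dependence on $I$ is the single explicit left factor. Since $I$ is constant in $s$, it can be pulled out of the integral, producing
\[
a_n(w)=b_{1,n}+I\,b_{2,n},
\]
where $b_{1,n}$ and $b_{2,n}$ are the two resulting integrals over $[0,1]$, manifestly independent of $I$. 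This is exactly the form $F_1(\alpha+i\beta)+IF_2(\alpha+i\beta)$ characterizing a slice function on $\s_y=\OO_{\{\alpha\pm i\beta\}}$ in the sense of Definition~\ref{def:slice-function}; equivalently, $w\mapsto a_n(w)$ is left affine over $A$.

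The transfer of the hypotheses to all of $\s_y$ and the application of~\eqref{eq:a_n} at $w$ are routine. The main obstacle is the bookkeeping in the last step: one must be certain that the \emph{only} surviving occurrence of $I$ is the explicit left factor in front of $b_{2,n}$. What makes this succeed is precisely that~\eqref{rep2} is invoked with a single fixed unit $J$, so $P(s)$ and $Q(s)$ are $I$-free, and that the products to be simplified involve only $I$ together with central real scalars; no genuine non-associativity of $A$ ever comes into play.
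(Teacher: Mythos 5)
Your proof is correct and rests on the same pillars as the paper's own: formula \eqref{eq:a_n} applied at each point $w=\alpha+\beta I$ of $\s_y$, followed by the observation that the resulting integrand depends on $I$ only in a left-affine way, so that the integral does too. The paper packages the second step more abstractly: for fixed $s$, the integrand is the pointwise product of the slice-preserving function $\varphi_s(\alpha+\beta I)=e^{-2\pi n I s}$ and the slice function $\psi_s(\alpha+\beta I)=f(\alpha+\beta I+re^{2\pi Is})$, and a pointwise product with a slice-preserving left factor coincides with the slice product, hence is again a slice function. You instead make $\psi_s$ explicit via the Representation Formula \eqref{rep2} and multiply out by hand, using centrality of real scalars and $I(IQ)=-Q$; this is the same argument carried out concretely rather than by appeal to the algebraic structure of Proposition \ref{prop:algebra}, and your bookkeeping (including the final form $b_{1,n}+Ib_{2,n}$) is right.

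One genuine inaccuracy in your preliminary step: the claim that $\Sigma(w,R_1,R_2)=\Sigma(y,R_1,R_2)$ for every $w\in\s_y$ is false, because $\sigma_A$ and $\tau_A$ do \emph{not} depend on the second argument only through its real part and the modulus of its imaginary part: by definition $\sigma_A(x,w)=|x-w|$ when $x$ and $w$ lie in a common slice. Concretely, $\Sigma(w,R_1,R_2)$ contains the annulus $A_I(w,R_1,R_2)\subseteq\cc_I$, whereas $\Sigma(y,R_1,R_2)$ contains $A_J(y,R_1,R_2)\subseteq\cc_J$ instead, and these differ whenever $I\neq\pm J$. What is true --- and all you need --- is the inclusion $\Sigma(w,R_1,R_2)\subseteq\OO$: since $\OO$ is circular and contains $A_J(y,R_1,R_2)$, it contains its circularization and hence every $A_I(w,R_1,R_2)$, while $\OO(w,R_1,R_2)=\OO(y,R_1,R_2)$ by definition. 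Since the statement of the corollary in any case already grants the expansion at every $w\in\s_y$, this slip does not undermine the proof of the actual conclusion, but the justification as written should be replaced by the circularity argument.
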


\begin{proof}
Let us fix $n \in \zz$ and suppose that $\s_y = \alpha +\beta \s_A$ with $\alpha, \beta \in \rr$ and $\beta>0$. According to Formula~\eqref{eq:a_n},
\[a_n(\alpha+\beta J) = r^{-n} \int_0^1 e^{-2\pi nJ s}\, f(\alpha+\beta J+ re^{2\pi Js})\,ds
\]
for all $r \in (R_1,R_2)$ (see the hypotheses of Theorem~\ref{Laurent}). Now, for $s \in [0,1]$ fixed, the functions 
\[\varphi_s(\alpha+\beta J):=e^{-2\pi nJ s} = \cos(2\pi ns)-J \sin(2\pi ns)\]
and $\psi_s(\alpha+\beta J):= f(\alpha+\beta J+ re^{2\pi Js})$ are slice functions on $\s_y$ and so is their slice product $\varphi_s \cdot \psi_s$. Since $\varphi_s$ is slice preserving, we have $(\varphi_s \cdot \psi_s)(\alpha+\beta J) = \varphi_s(\alpha+\beta J) \psi_s(\alpha+\beta J)$. It follows immediately that
\[a_n(\alpha+\beta J) = r^{-n} \int_0^1 \varphi_s(\alpha+\beta J) \psi_s(\alpha+\beta J)\,ds\,. 
\]
is a slice function on $\s_y$.
\end{proof}


\section{Types of singularities}\label{sec:types}

In this section, we undertake a first classification of the singularities of slice regular functions.
Following the approach used for the quaternionic case in~\cite{singularities}, we give the next definition.

\begin{definition}\label{def:Laurent-order}
Let $f$ be a slice regular function on a circular open subset $\OO$ of $Q_A$. We say that a point $y \in Q_A$ is a \emph{singularity} for $f$ if there exists $R>0$ such that $\Sigma(y,0,R) \subseteq \OO$, so that $f$ admits a Laurent expansion
\begin{equation}\label{eq:classification}
f(x) = \sum_{n \in \zz} (x-y)^{\punto n}\cdot a_n
\end{equation}
with an inner radius of convergence equal to $0$ and a positive outer radius of convergence. This expansion will be called \emph{the Laurent expansion of $f$ at $y$}.

The point $y$ is said to be a \emph{pole} for $f$ if there exists an $m\geq0$ such that $a_{-k} = 0$ for all $k>m$; the minimum such $m$ is called the \emph{order} of the pole and denoted as $\ord_f(y)$. If $y$ is not a pole, then it is called an \emph{essential singularity} for $f$ and $\ord_f(y) := +\infty$. Finally, $y$ is called a \emph{removable singularity} if $f$ extends to a slice regular function in $\mc{SR}(\widetilde \OO)${, where $\widetilde \OO$ is a circular open subset of $Q_A$ containing $y$.}
\end{definition}

We point out that the set of convergence $\Sigma(y,0,R) = A_J(y,0,R) \cup \OO(y,0,R)$ of the Laurent expansion \eqref{eq:classification} used in the classification does not always contain a set of the form $U \setminus \s_y$ for some circular neighborhood $U$ of $\s_y$ in $Q_A$. For this reason, a pole of order $0$ is not necessarily a removable singularity for $f$:

\begin{example}
Let $J \in \s_A$ and let
\begin{equation*}
f(x) = (x+J)^{-\punto } = (x^2+1)^{-1}(x-J).
\end{equation*}
The point $-J$ is clearly a pole of order $1$ for $f$. The expansion at $J$ performed in Example~\ref{ex:expansion} proves that $J$ is a pole of order $0$ . On the other hand, $J$ is not a removable singularity for $f$. Indeed, for each circular neighborhood $U$ of $J$ in $Q_A$, the modulus $|f|$ is unbounded in $U \setminus \s_A$ because $\s_A$ includes $-J$. 
\end{example}

For the same reason, in~\cite{singularities}, the study of $f$ near each pole $y$ was addressed by expressing $f$ as a quotient of functions, both slice regular in a neighborhood of $y$. In the present paper, we do not simply generalize this construction to more general algebras. We instead provide a more refined tool, which is new even for the quaternionic case: a Laurent-type expansion valid in circular open subsets of $Q_A$. The next two sections will be devoted to this matter.


\section{Spherical Laurent series} \label{sec:spherical-laurent}

In this section we will generalize, in the Laurent sense, the notion of spherical series introduced in~\cite{expansion} for quaternions and in \cite{expansionsalgebras} for alternative *-algebras.

\begin{definition}
Let $y \in Q_A$. For every $k \in \zz$, we set 
\begin{align*}
\stx_{y,2k}(x) &:= \Delta_y^{\punto k}(x) =  \Delta_y(x)^k,\\
\stx_{y,2k+1}(x) &:= \Delta_y^{\punto k}(x)\cdot (x-y)=\Delta_y(x)^k (x-y).
\end{align*}
For any sequence $\{c_n\}_{n\in \zz}$ in $A$, the series
\begin{equation}\label{eq:sphericallaurent}
\sum_{n \in \zz} \stx_{y,n} \cdot c_n
\end{equation}
is called the \emph{spherical Laurent series centered at $y$ associated with $\{c_n\}_{n\in \zz}$}. If $c_n = 0$ for every $n<0$, then it is called the \emph{spherical series centered at $y$ associated with $\{c_n\}_{n \in \nn}$}.
\end{definition}

We point out that, for $n=2k+1$,
\[\stx_{y,n}(x) \cdot c_n = \Delta_y^{\punto k}(x)\cdot (x-y)\cdot c_n = \Delta_y^{\punto k}(x)((x-y)\cdot c_n) = \Delta_y^{k}(x)((x-y) c_n)\]
(where we have used the fact that $\Delta_y$ is a slice preserving function, whence an element of the associative nucleus of $\mc{SR}(Q_A)$) may well differ from
\[\stx_{y,n}(x) c_n = (\Delta_y^{k}(x)(x-y))c_n\]
in a nonassociative setting.
In order to study the convergence of~\eqref{eq:sphericallaurent}, the following notions introduced in \cite{expansionsalgebras,expansion} will be useful.

\begin{proposition}\label{pseudometric}
The function
\begin{equation} \label{eq:sto}
\sto(x,y):=\sqrt{\|\Delta_y(x)\|_A}
\end{equation}
is a pseudodistance on $Q_A$, called the \emph{Cassini pseudodistance}, and it is continuous with respect to the relative Euclidean topology of $Q_A$. For all $J \in \s$ and for all $y,z \in \cc_J$ the equality
\[\sto(y,z):=\sqrt{|z-y|\,|z-y^c|}\]
holds; for all $y \in \rr, x \in Q_A$ we have $\sto(x,y) = \|x-y\|_A$. The \emph{Cassini ball} 
\[\Sto(y,r):=\{x \in Q_A \, | \, \sto(x,y)<r\}\]
is a circular open neighborhood of $\s_y$ in $Q_A$ and the \emph{Cassini shell} 
\[\Sto(y,r_1,r_2):=\{x \in Q_A \, | \, r_1<\sto(x,y)<r_2\}.\]
is a circular open subset of $Q_A$. In the special case when $y \in \rr$, they coincide with $Q_A \cap B(y,r)$ and with $\{x \in Q_A\,|\, r_1<\|x-y\|_A<r_2\}$, respectively.
\end{proposition}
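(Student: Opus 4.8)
The plan is to prove that $\sto$ is a pseudodistance by exploiting the $^*$-algebra structure and reducing all the analytic estimates to the complex slices $\cc_J$, where everything becomes a statement about the Cassini-type quantity $\sqrt{|z-y|\,|z-y^c|}$. First I would establish the explicit formula on a slice. If $y \in \cc_J$ and $z \in \cc_J$, then by Example~\ref{ex:Delta} the function $\Delta_y$ is slice preserving and, by Lemma~\ref{productpreservedslice}, its value satisfies $\Delta_y(z) = (z-y)(z-y^c)$ in $\cc_J$; taking norms and using that $\|\cdot\|_A$ restricts to the complex modulus on $\cc_J$ gives $\sto(y,z) = \sqrt{|z-y|\,|z-y^c|}$. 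The case $y \in \rr$ follows immediately since then $y^c = y$, so $\sto(x,y) = \sqrt{\|\Delta_y(x)\|_A} = \sqrt{\|(x-y)\cdot(x-y)^c\|_A} = \sqrt{n(x-y)} = \|x-y\|_A$ by Assumption~(F), noting $x-y \in Q_A$.

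Next I would verify the pseudodistance axioms. Symmetry and nonnegativity are immediate, and $\sto(x,x) = 0$ holds because $\Delta_x(x) = x^2 - x\,t(x) + n(x) = 0$ (here I would observe that $\Delta_y(x)$ depends only on the sphere $\s_y$, so it vanishes exactly on $\s_x$, which is why $\sto$ is a genuine pseudodistance rather than a distance). The main obstacle is the triangle inequality, since $\sto$ is defined through a square root of a product of two distances and is not itself a distance in the usual sense. The key idea is that on each slice $\cc_J$ one has the factorization $\sto(y,z)^2 = |z-y|\,|z-y^c|$, and the triangle inequality for $\sto$ can be reduced, via the representation formulae and the explicit slice formula, to an elementary inequality between such geometric means of Euclidean distances; I expect this to require a careful case analysis depending on whether the three points lie on a common slice, and I would reduce the general case to the planar one by choosing $J$ so that a suitable pair of the three points lies in $\cc_J$ and invoking~\eqref{rep1}. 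I would cite the prior treatment in~\cite{expansionsalgebras,expansion} for this computational core rather than reproducing it in full.

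For continuity, I would argue that $x \mapsto \Delta_y(x) = x^2 - x\,t(y) + n(y)$ is a polynomial map, hence continuous in the Euclidean topology of $Q_A$, and that $\|\cdot\|_A$ and the square root are continuous; composing yields continuity of $\sto(\cdot,y)$. Finally, for the descriptions of the Cassini ball and shell, circularity follows because $\sto(x,y)$ depends only on $\Delta_y(x)$, which is a slice preserving function and therefore constant on each sphere $\s_x$; this shows the sublevel and annular sets are circular and, being preimages of open sets under a continuous function, open. The neighborhood claim $\Sto(y,r) \supseteq \s_y$ follows from $\sto \equiv 0$ on $\s_y$. The special cases for $y \in \rr$ then drop out directly from the identity $\sto(x,y) = \|x-y\|_A$ established in the first step, rewriting the defining inequalities of $\Sto(y,r)$ and $\Sto(y,r_1,r_2)$ in terms of $\|x-y\|_A$.
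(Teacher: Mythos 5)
The paper offers no internal proof to compare against here: this proposition is stated as recalled material from \cite{expansionsalgebras,expansion} (``the following notions introduced in \cite{expansionsalgebras,expansion} will be useful''), so your decision to cite those references for the computational core -- the triangle inequality -- is consistent with the paper's own treatment, and your verifications of the slice formula, of continuity, of openness, and of the statements for real $y$ are essentially on target. The genuine problems lie in two intermediate claims that are false as written.

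First, your circularity argument asserts that $\Delta_y$, ``being slice preserving, is constant on each sphere $\s_x$.'' That is not true: slice preserving functions are not constant on spheres (the identity $x \mapsto x$ is slice preserving), and indeed $\Delta_y(\alpha+\beta I)=F_1(\alpha+i\beta)+IF_2(\alpha+i\beta)$ with $F_1,F_2$ real-valued depends on $I$ whenever $F_2\neq 0$. What is true, and what you actually need throughout, is that the \emph{norm} of a slice preserving function is constant on each sphere: $F_1+IF_2$ lies in $\cc_I\subseteq Q_A$, where $\|\cdot\|_A$ coincides with the complex modulus, so $\|\Delta_y(\alpha+\beta I)\|_A=\sqrt{F_1^2+F_2^2}$ independently of $I$. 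This corrected lemma also repairs two other soft points. Symmetry of $\sto$ is not ``immediate'' -- the defining expression $\sqrt{\|\Delta_y(x)\|_A}$ is asymmetric -- but follows once you know $\sto(x,y)$ depends only on $\s_x$ and $\s_y$ (the latter by Example~\ref{ex:Delta}): move both points to a common slice and use $|x-y^c|=|x^c-y|$. Likewise, the reduction of the triangle inequality becomes clean: since $\sto$ is sphere-invariant in each argument, all \emph{three} points can simultaneously be replaced by representatives in the closed upper half-plane of a single slice $\cc_J$; no case analysis and no appeal to the representation formula \eqref{rep1} is needed, and what remains is exactly the planar inequality for $\sqrt{|z-w|\,|z-\overline{w}|}$ proved in the cited papers. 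Second, in the real case your chain passes through $\|\Delta_y(x)\|_A=\|(x-y)\cdot(x-y)^c\|_A=n(x-y)$: beware that $\Delta_y(x)$ and $n(x-y)$ are different elements of $A$ (in $\hh$, with $y=1$ and $x=i$, one has $\Delta_1(i)=-2i$ while $n(i-1)=2$), and the slice product $(x-y)\cdot(x-y)^c$ equals $(x-y)^2$ pointwise, not $n(x-y)$. Only the norms agree, via $\|(x-y)^2\|_A=\|x-y\|_A^2=n(x-y)$: the first equality holds because $x-y$ lies in some $\cc_I$, where the norm is multiplicative, and the second is Assumption (F). With these two repairs your outline is sound.
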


The next lemma will also be useful.

\begin{lemma}\label{technical}
Fix $J \in \s_A$ and $y = \alpha + \beta J \in \cc_J$. Then
\[\sqrt{\sto(y,z)^2+\beta^2}-\beta \leq |z-y| \leq \sqrt{\sto(y,z)^2+\beta^2}+\beta\]
for all $z \in \cc_J$.
\end{lemma}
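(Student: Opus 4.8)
The plan is to reduce the inequality to an elementary computation in the complex plane $\cc_J$. Since $y = \alpha + \beta J$ and $z \in \cc_J$, both $z-y$ and $z-y^c$ lie in $\cc_J$, which is isometrically identified with $\cc$ via $\alpha + \beta J \mapsto \alpha + i\beta$. Under this identification, I would write $z = s + tJ$ with $s,t \in \rr$, so that $|z-y| = \sqrt{(s-\alpha)^2 + (t-\beta)^2}$ and $|z-y^c| = \sqrt{(s-\alpha)^2 + (t+\beta)^2}$. By Proposition~\ref{pseudometric}, the Cassini pseudodistance satisfies $\sto(y,z)^2 = |z-y|\,|z-y^c|$, so the whole statement lives entirely inside ordinary real analysis once these substitutions are made.

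The core of the argument is to set $u := |z-y|$ and to control $u$ in terms of $\sto(y,z)$ and $\beta$ alone. Writing $a := (s-\alpha)^2$ and noting $u^2 = a + (t-\beta)^2$ while $|z-y^c|^2 = a + (t+\beta)^2$, I would observe that
\[
u^2 \cdot |z-y^c|^2 = \sto(y,z)^4,
\]
and that $|z-y^c|^2 = u^2 + 4\beta t$ (after expanding), while also $|z-y^c|^2 - u^2 = 4\beta t$. The cleanest route, however, is probably to avoid introducing $t$ explicitly: from $u\,|z-y^c| = \sto(y,z)^2$ together with the triangle-type bound $\big||z-y^c| - u\big| \leq 2\beta$ (which holds because $y$ and $y^c$ differ by $2\beta J$, so $|z-y^c|$ and $|z-y|$ differ by at most $|y - y^c| = 2\beta$), I get a system from which $u$ can be extracted. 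Setting $P := \sto(y,z)^2 = u\,v$ with $v := |z-y^c|$ and $|v - u| \leq 2\beta$, I would solve for $u$: since $v = P/u$, the constraint becomes $|P/u - u| \leq 2\beta$, i.e. $|P - u^2| \leq 2\beta u$, which rearranges to the two quadratic inequalities $u^2 - 2\beta u - P \leq 0$ and $u^2 + 2\beta u - P \geq 0$. Solving these for the positive root $u$ yields exactly $\sqrt{P + \beta^2} - \beta \leq u \leq \sqrt{P + \beta^2} + \beta$, which is the claimed inequality with $P = \sto(y,z)^2$.

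The one point requiring a little care — and the closest thing to an obstacle — is justifying the bound $\big||z - y^c| - |z - y|\big| \leq 2\beta$, but this is simply the reverse triangle inequality in $\cc_J$ applied to the points $z - y$ and $z - y^c$, whose difference is $y^c - y = -2\beta J$ of modulus $2\beta$. I would also check the degenerate cases: when $\beta = 0$ (so $y \in \rr$) both outer bounds collapse to $\sto(y,z) = |z - y|$, consistent with the real-point formula in Proposition~\ref{pseudometric}, and when $P = 0$ (so $z \in \s_y$, i.e. $z \in \{y, y^c\}$) the bounds read $0 \leq |z-y| \leq 2\beta$, which holds since $|z-y|$ is then either $0$ or $2\beta$. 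The solution of the quadratics must also confirm that the relevant roots are real and nonnegative, which follows at once from $P, \beta \geq 0$. With these verifications the proof is complete, and the whole argument amounts to translating the Cassini identity into the complex plane and solving a pair of quadratic inequalities.
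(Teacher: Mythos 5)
Your proof is correct and takes essentially the same route as the paper's: both rest on the Cassini identity $\sto(y,z)^2=|z-y|\,|z-y^c|$ combined with the triangle inequality $\bigl||z-y^c|-|z-y|\bigr|\leq|y-y^c|=2\beta$. The only difference is presentational — the paper packages the algebra as a proof by contradiction (multiplying the two estimates to obtain $\sto(y,z)^2<\sto(y,z)^2$), whereas you solve the equivalent pair of quadratic inequalities in $|z-y|$ directly, with the same difference-of-squares computation underneath.
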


\begin{proof}
By Proposition~\ref{pseudometric}, for all $z \in \cc_J$
\[\sto(y,z)^2 = |z-y|\,|z-y^c|.\]
If $|z-y|<\sqrt{\sto(y,z)^2+\beta^2}-\beta$, then automatically
\[|z-y^c| \leq |z-y| + |y-y^c| < \sqrt{\sto(y,z)^2+\beta^2}-\beta + 2 \beta = \sqrt{\sto(y,z)^2+\beta^2}+\beta,\]
whence the contradiction
\[\sto(y,z)^2 = |z-y|\,|z-y^c| < \sto(y,z)^2+\beta^2 - \beta^2 = \sto(y,z)^2.\]
An inequality $|z-y|>\sqrt{\sto(y,z)^2+\beta^2}+\beta$ would lead to an analogous contradiction.
\end{proof}

\begin{theorem} \label{thm:Abel-spherical}
Let $y \in Q_A$, let $\{c_n\}_{n \in \zz}$ be a sequence in $A$, set
\begin{center}
$r_1 := \limsup_{m \to +\infty} \|c_{-m}\|_A^{1/m} \;$ and $\; 1/r_2 := \limsup_{n \to +\infty} \|c_n\|_A^{1/n}$,
\end{center}
and consider the spherical Laurent series
\begin{equation}
\mr{S}(x)=\sum_{n \in \zz} \stx_{y,n}(x) \cdot c_n.
\end{equation}
If there exists $n<0$ such that $c_n \neq 0$, then the domain of convergence is the Cassini shell $\Sto(y,r_1,r_2)$ (with $r_1=0$ if there are finitely many such $n$'s). If, to the contrary, $c_n=0$ for all $n<0$ then $\mr{S}(x)$ is a spherical series and its domain of convergence is the Cassini ball $\Sto(y,r_2)$. If the domain of convergence $\OO$ is not empty then the sum is slice regular in $\OO$.
\end{theorem}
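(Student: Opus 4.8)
The plan is to reduce the convergence of the spherical Laurent series $\mr{S}(x)=\sum_{n \in \zz} \stx_{y,n}(x) \cdot c_n$ to the already-established convergence theory for ordinary Laurent series (Theorem~\ref{thm:Abel-power}), exploiting the close relationship between the Cassini pseudodistance $\sto$ and the Euclidean modulus $|\cdot|$ on a slice $\cc_J$ recorded in Lemma~\ref{technical}. First I would fix $J \in \s_A$ with $y = \alpha + \beta J \in \cc_J$ and restrict attention to the slice $\cc_J$, since by Corollary~\ref{domainconvergence} total convergence on circular compacta is equivalent to total convergence on the slice $T_J$; this is where Assumption (F) and the Representation Formula do the heavy lifting, and it lets me work with genuine complex (or $\cc_J$-valued) quantities.

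On the slice $\cc_J$, I would use Lemma~\ref{productpreservedslice} to evaluate $\stx_{y,n}(z)$ pointwise: for $z \in \cc_J\setminus\{y,y^c\}$ one has $\Delta_y(z) = (z-y)(z-y^c)$, so $\stx_{y,2k}(z) = (z-y)^k(z-y^c)^k$ and $\stx_{y,2k+1}(z) = (z-y)^{k+1}(z-y^c)^k$, and the slice product by $c_n$ becomes the ordinary product $\stx_{y,n}(z)\,c_n$. Then by Proposition~\ref{pseudometric} the norm $\|\stx_{y,n}(z)\,c_n\|_A$ is comparable (up to the constants $c_A, C_A$ of Remark~\ref{multiplicativeestimates}) to $\sto(y,z)^{|n|}\|c_n\|_A$, because $\|\Delta_y(z)^k\|_A$ behaves like $\big(|z-y|\,|z-y^c|\big)^{k} = \sto(y,z)^{2k}$ and the single extra factor $(z-y)$ in the odd case contributes a bounded perturbation. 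The root criterion applied to the majorant $\sum_n \sto(y,z)^{|n|}\|c_n\|_A$ then yields total convergence precisely when $r_1 < \sto(y,z) < r_2$, i.e.\ on $\Sto(y,r_1,r_2)$, and divergence outside its closure.

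The main obstacle I anticipate is the odd-index term and, more delicately, the control of convergence \emph{up to the boundary sphere} $\s_y$ itself (the case $\sto(x,y) \to 0$), where $\Delta_y$ vanishes and the factor $(z-y)$ versus $(z-y^c)$ asymmetry matters; here Lemma~\ref{technical} is the key device, since it shows $|z-y|$ and $|z-y^c|$ are each squeezed between $\sqrt{\sto(y,z)^2+\beta^2}\pm\beta$, so neither can blow up or collapse faster than a power of $\sto(y,z)$, keeping the odd factor $(z-y)$ harmless for the root-criterion exponent. I would then argue that the series converges uniformly on Cassini-compacta inside $\Sto(y,r_1,r_2)$ but fails at any point with $\sto(x,y) \notin [r_1,r_2]$, identifying $\Sto(y,r_1,r_2)$ as the domain of convergence; the power-series case ($c_n=0$ for $n<0$) follows identically with the inner radius absent, giving the Cassini ball $\Sto(y,r_2)$.

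Finally, I would invoke Remark~\ref{regularsum} to conclude slice regularity of the sum on the (nonempty) domain of convergence: each partial-sum function $\stx_{y,n}\cdot c_n$ is slice regular on $Q_A$ (being a slice product of the slice-preserving polynomial $\Delta_y^{\punto k}$ with $(x-y)^{\punto\epsilon}\cdot c_n$), the series converges totally on circular compacta of the domain of convergence, and hence its sum lies in $\sr(\OO)$. The one point requiring care in writing this up cleanly is making the comparison constants uniform over the relevant compact region so that the $\limsup$ computations for $r_1$ and $r_2$ transfer exactly; this is routine given Remark~\ref{multiplicativeestimates}, but it is the step where the estimates must be assembled carefully rather than quoted.
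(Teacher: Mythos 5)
Your proposal is correct and follows essentially the same route as the paper's own proof: restriction to the slice $\cc_J$ via Lemma~\ref{productpreservedslice}, the two-sided norm estimates of Remark~\ref{multiplicativeestimates} combined with Lemma~\ref{technical} to tame the odd-index factor $(z-y)$, the root criterion on the slice, then Corollary~\ref{domainconvergence} to identify the circular domain of convergence and Remark~\ref{regularsum} for slice regularity of the sum. The only slip is notational: for $n<0$ the quantity $\|\stx_{y,n}(z)\|_A$ is comparable to $\sto(y,z)^{n}$ (a \emph{negative} power), not $\sto(y,z)^{|n|}$, which is exactly what your stated conclusion $r_1<\sto(y,z)<r_2$ requires, so the argument itself is unaffected.
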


\begin{proof}
If we let $f_n(x):= \stx_{y,n}(x) \cdot c_n$, then, by Lemma~\ref{productpreservedslice}, we have $f_n(z) =  \stx_{y,n}(z) c_n$ for all $z \in \cc_J$. Hence, by Remark~\ref{multiplicativeestimates},
\[c_A \|\stx_{y,n}(z)\|_A\,\|c_n\|_A \leq \|f_n(z)\|_A \leq C_A \|\stx_{y,n}(z)\|_A\, \|c_n\|_A\,.\]
Moreover, $\|\stx_{y,2k}(z)\|_A = |\Delta_y(z)|^k = \sto(y,z)^{2k}$ while $\|\stx_{y,2k+1}(z)\|_A = |\Delta_y^k(z)(z-y)| = \sto(y,z)^{2k} |z-y|$ implies, by Lemma~\ref{technical}, that
\[ \sto(y,z)^{2k} \left(\sqrt{\sto(y,z)^2+\beta^2}-\beta\right)\leq \|\stx_{y,2k+1}(z)\|_A\leq\sto(y,z)^{2k} \left(\sqrt{\sto(y,z)^2+\beta^2}+\beta\right)\]
where $\beta:=|\im(y)|$.

The root criterion allows us to conclude that $\mr{S}(z) = \sum_{n \in \zz}f_n(z)$ converges totally on the compact subsets of the Cassini annulus $\Sto_J(y,r_1,r_2):=\Sto(y,r_1,r_2) \cap \cc_J$ and that it does not converge at any point of $\cc_J \setminus \overline{\Sto_J(y,r_1,r_2)}$.

By Corollary~\ref{domainconvergence}, the domain of convergence $\OO$ of $\mr{S}(x)$ is either $\Sto(y,r_1,r_2)$ or $\Sto(y,r_2)$ depending on whether $\mr{S}(x)$ is a genuine spherical Laurent series or simply a spherical series.
Finally, if $\OO \neq \emptyset$, then the sum is a slice regular function $\mr{S} \in \sr(\OO)$ by Remark~\ref{regularsum}.
\end{proof}

\begin{remark}\label{rearrange}
In the hypotheses of Theorem~\ref{thm:Abel-spherical}, 
\[\stx_{y,2k}(x) \cdot c_{2k} + \stx_{y,2k+1}(x) \cdot c_{2k+1} = \Delta_y^{k}(x) (c_{2k} + (x-y)c_{2k+1}),\]
whence the two following consequences.
\begin{enumerate}
\item If $y' \in \s_y$ and if we set
\begin{align*}
&c'_{2k}:=c_{2k}+(y'-y)c_{2k+1},\\
&c'_{2k+1}:=c_{2k+1},
\end{align*}
then the series $\sum_{n \in \zz}\stx_{y',n}(x) \cdot c'_n$ has the same domain of convergence $\OO$ as $\mr{S}(x)$ and
\[\mr{S}(x) = \sum_{n \in \zz}\stx_{y',n}(x) \cdot c'_n\]
in $\OO$.
\item If the domain of convergence $\OO$ of $\mr{S}(x)$ is not empty and if we set
\begin{align}
&u_k:=c_{2k+1}, \label{eq:uk}\\
&v_k:=c_{2k}-y c_{2k+1}, \label{eq:vk}
\end{align}
then the domain of convergence of the series $\sum_{k \in \zz}\Delta_y^k(x)(xu_k+v_k)$ includes $\OO$ and
\[\mr{S}(x)=\sum_{k \in \zz}\Delta_y^k(x)(xu_k+v_k)\]
in $\OO$.
\end{enumerate}
\end{remark}


\section{Spherical Laurent expansions} \label{sec:spherical-laurent-expansions}

We now proceed to expand slice regular functions into spherical Laurent series. We begin with the special case $A=\cc$.

\begin{lemma} \label{lem:C}
Suppose $A=\cc$ and let $\OO\subseteq \cc$ be an open set preserved by complex conjugation and take any $g \in \mc{SR}(\OO)$; that is, any holomorphic function $g: \OO \to \cc$. Let $w \in \cc$ and $r_1,r_2 \in [0,+\infty]$ be such that $\Sto(w,r_1,r_2) \subseteq \OO$. If $r_1<r_2$, then we can define for all $n \in \zz$ and for $r$ in the interval $(r_1,r_2)$
\begin{align}\label{complexsphericalnumber}
c_n:=\frac{1}{2\pi i}\int_{\partial \Sto(w,r)}\frac{g(\zeta)}{\stx_{w,n+1}(\zeta)} \, d\zeta,
\end{align}
which does not depend on the choice of $r$. The spherical Laurent series centered at $w$ associated with $\{c_n\}_{n \in \zz}$ has a domain of convergence that includes the Cassini shell $\Sto(w,r_1,r_2)$ and
\begin{align*}
g(z) = \sum_{n \in \zz}\stx_{w,n}(z) \,c_n
\end{align*}
for all $z \in \Sto(w,r_1,r_2)$. If, moreover, $\Sto(w,r_2)\subseteq \OO$, then $c_n=0$ for all $n<0$ and the expansion is valid in the whole Cassini ball $\Sto(w,r_2)$.
\end{lemma}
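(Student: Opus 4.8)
The plan is to reduce the statement to the classical Cauchy and Laurent theory for the holomorphic function $g$ by summing the \emph{spherical Cauchy kernel} $\sum_n \stx_{w,n}(z)\,\stx_{w,n+1}(\zeta)^{-1}$ into closed form. Since here $A=\cc$ is commutative and associative, the slice products are ordinary products and $\stx_{w,2k}(\zeta)=\Delta_w(\zeta)^k$, $\stx_{w,2k+1}(\zeta)=\Delta_w(\zeta)^k(\zeta-w)$ are genuine complex functions. I would first note that, for every $n$, the integrand $g(\zeta)\,\stx_{w,n+1}(\zeta)^{-1}$ is holomorphic on $\Sto(w,r_1,r_2)$: the only zeros of $\stx_{w,n+1}$ are the two points $w,w^c$ of $\s_w$, which lie outside the shell. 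Consequently, for $r<r'$ in $(r_1,r_2)$ the Cassini ovals $\partial\Sto(w,r)$ and $\partial\Sto(w,r')$ cobound the region $\Sto(w,r,r')\subseteq\OO$, and Cauchy's theorem yields that $c_n$ is independent of $r$ (one may take $r\neq|\im(w)|$ so that the ovals are smooth, which is harmless as $(r_1,r_2)$ is open).

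The heart of the proof, and the only computation with real content, is the kernel identity. Grouping the series over even and odd indices and factoring out a geometric series in $\Delta_w(z)/\Delta_w(\zeta)$, I would write
\[
\sum_{k\geq0}\left(\frac{\Delta_w(z)^k}{\Delta_w(\zeta)^k(\zeta-w)}+\frac{\Delta_w(z)^k(z-w)}{\Delta_w(\zeta)^{k+1}}\right)=\left(\frac{1}{\zeta-w}+\frac{z-w}{\Delta_w(\zeta)}\right)\frac{\Delta_w(\zeta)}{\Delta_w(\zeta)-\Delta_w(z)}.
\]
The expected obstacle is simplifying the right-hand side; I would use the two algebraic identities $\Delta_w(\zeta)/(\zeta-w)=\zeta-w^c$ and $\Delta_w(\zeta)-\Delta_w(z)=(\zeta-z)(z+\zeta-t(w))$ to collapse the bracketed numerator to $z+\zeta-t(w)$ and cancel, obtaining exactly $1/(\zeta-z)$. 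Expanding the complementary geometric series over $k\leq-1$ in the same way gives the twin identities
\[
\sum_{n\geq0}\stx_{w,n}(z)\,\stx_{w,n+1}(\zeta)^{-1}=\frac{1}{\zeta-z},\qquad\sum_{n<0}\stx_{w,n}(z)\,\stx_{w,n+1}(\zeta)^{-1}=-\frac{1}{\zeta-z},
\]
valid for $\sto(z,w)<\sto(\zeta,w)$ and for $\sto(\zeta,w)<\sto(z,w)$ respectively, with geometric (hence uniform in $\zeta$) convergence on each oval once $z$ is fixed strictly inside or outside it.

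With the kernel in hand, the expansion follows the classical Laurent argument. For $z\in\Sto(w,r_1,r_2)$ I would choose $r_1<\rho_1<\sto(z,w)<\rho_2<r_2$ and apply Cauchy's integral formula on the region $\Sto(w,\rho_1,\rho_2)$, whose oriented boundary is $\partial\Sto(w,\rho_2)-\partial\Sto(w,\rho_1)$, to write $g(z)$ as the difference of the two contour integrals of $g(\zeta)/(\zeta-z)$. Substituting the positive kernel identity on the outer oval and the negative one on the inner oval, then integrating term by term, produces $g(z)=\sum_{n\in\zz}\stx_{w,n}(z)\,c_n$ with the $c_n$ exactly as defined. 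To identify the domain of convergence I would estimate $\|\stx_{w,n+1}(\zeta)\|_A\asymp\sto(\zeta,w)^{n+1}=r^{n+1}$ on $\partial\Sto(w,r)$ (using Proposition~\ref{pseudometric} and Lemma~\ref{technical} to control the factor $|\zeta-w|$ in the odd terms), so that $\|c_n\|_A\lesssim r^{-n}$ and $\|c_{-m}\|_A\lesssim r^{m}$; letting $r\to r_2^-$ and $r\to r_1^+$ shows that the radii $r_1',r_2'$ furnished by Theorem~\ref{thm:Abel-spherical} satisfy $r_1'\leq r_1$ and $r_2'\geq r_2$, hence the domain of convergence contains $\Sto(w,r_1,r_2)$.

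Finally, for the power-series case I would argue as follows. If $\Sto(w,r_2)\subseteq\OO$, then for each $n<0$ the exponent $n+1\leq0$ turns $\stx_{w,n+1}(\zeta)^{-1}$ into a polynomial, so $g(\zeta)\,\stx_{w,n+1}(\zeta)^{-1}$ is holomorphic on the whole Cassini ball and $c_n=0$ by Cauchy's theorem. The surviving spherical series has, by the estimates above and Theorem~\ref{thm:Abel-spherical}, a domain of convergence containing $\Sto(w,r_2)$; a single-contour Cauchy formula, now valid at every $z\in\Sto(w,r_2)$ including $z=w,w^c$ (where the positive kernel expansion collapses to its $n=0$ term $1/(\zeta-w)$), yields the expansion throughout the ball.
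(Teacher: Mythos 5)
Your proof is correct, and its global skeleton is the same as the paper's (Cauchy's theorem for the independence of $r$, the Cauchy integral formula on the two ovals $\partial \Sto(w,\rho_1)$ and $\partial \Sto(w,\rho_2)$, then an expansion of the Cauchy kernel in spherical terms), but the key technical device is genuinely different. The paper never sums an infinite kernel: it uses the finite telescoping identity \eqref{eq:cauchy-kernel}, quoted from \cite{expansionsalgebras}, together with its counterpart at $\overline{w}$ obtained via $\stx_{\overline{w},m}=1/\stx_{w,-m}$, so only finitely many terms are integrated, and convergence is then forced by estimating the two remainder integrals with the help of Lemma~\ref{technical}, which exhibits a geometric rate. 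You instead sum the kernel in closed form, and your identities are correct: the simplification hinges on $\Delta_w(\zeta)-\Delta_w(z)=(\zeta-z)(\zeta+z-t(w))$, and the factor $\zeta+z-t(w)$ is automatically nonzero in the regime $\sto(z,w)\neq\sto(\zeta,w)$, since $\Delta_w(t(w)-z)=\Delta_w(z)$ would otherwise give equal Cassini radii; also your geometric ratio $\Delta_w(z)/\Delta_w(\zeta)$ has constant modulus $\sto(z,w)^2/\sto(\zeta,w)^2<1$ on the oval, so the Weierstrass test legitimizes the interchange of sum and integral that your route, unlike the paper's, must justify. What each approach buys: the paper's remainder method avoids term-by-term integration altogether and gets the pointwise rate of convergence for free; your closed-form kernel is self-contained (no external identity needed), it gives the domain-of-convergence inclusion by clean Cauchy-type estimates on $\|c_n\|_A$ and $\|c_{-m}\|_A$ fed into Theorem~\ref{thm:Abel-spherical}, and it makes the final clause transparent --- your observation that for $n<0$ the function $1/\stx_{w,n+1}$ is a polynomial, whence $c_n=0$ by Cauchy's theorem and the expansion propagates to the whole ball by a single-contour formula, is an explicit treatment of the power-series case that the paper's proof leaves implicit. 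Your parenthetical care about the lemniscate value $r=|\im(w)|$ (where the oval has a node) is a sensible refinement the paper does not even mention.
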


\begin{proof}
Let us first establish that the definition of $c_n$ is independent of the choice of $r \in (r_1,r_2)$. If $r_1<r<r'<r_2$, then $\Sto(w,r_1,r_2)\supset \partial \Sto(w,r,r')$ and we can apply the complex Cauchy theorem to the holomorphic function $g$ to get the desired equality
\[0 = \int_{\partial \Sto(w,r,r')}\frac{g(\zeta)}{\stx_{w,n+1}(\zeta)} \, d\zeta = \int_{\partial \Sto(w,r')}\frac{g(\zeta)}{\stx_{w,n+1}(\zeta)} \, d\zeta-\int_{\partial \Sto(w,r)}\frac{g(\zeta)}{\stx_{w,n+1}(\zeta)} \, d\zeta\,.\]

Now let $z \in \Sto(w,r_1,r_2)$ and choose $R_1,R_2 \in \rr$ such that $r_1<R_1<R_2<r_2$ and such that $z \in \Sto(w,R_1,R_2)$. By the complex Cauchy integral formula,
\begin{align*}
g(z) = \frac{1}{2\pi i}\int_{\partial \Sto(w,R_2)}\frac{g(\zeta)}{\zeta-z}\, d\zeta - \frac{1}{2\pi i}\int_{\partial \Sto(w,R_1)}\frac{g(\zeta)}{\zeta-z}\, d\zeta.
\end{align*}
By direct computation (see~\cite[Equation (5.9)]{expansionsalgebras}), for all $n \in \nn$
\begin{equation} \label{eq:cauchy-kernel}
\frac{1}{\zeta-z}=\sum_{k=0}^n\frac{\stx_{w,k}(z)}{\stx_{w,k+1}(\zeta)}+\frac{\stx_{w,n+1}(z)}{\stx_{w,n+1}(\zeta)}\, \frac{1}{\zeta-z}.
\end{equation}
Applying the same equality at $\bar w$ and switching the roles of $\zeta$ and $z$, we have
\begin{align*}
-\frac{1}{\zeta-z}&=\sum_{k=0}^n\frac{\stx_{\bar w,k}(\zeta)}{\stx_{\bar w,k+1}(z)}+\frac{\stx_{\bar w,n+1}(\zeta)}{\stx_{\bar w,n+1}(z)} \, \frac{1}{z-\zeta}\\
&= \sum_{k=0}^n \frac{\stx_{w,-k-1}(z)}{\stx_{w,-k}(\zeta)}+\frac{\stx_{\bar w,n+1}(\zeta)}{\stx_{\bar w,n+1}(z)} \, \frac{1}{z-\zeta}\,,
\end{align*}
where we have taken into account that $\stx_{\bar w,m} = \frac{1}{\stx_{w,-m}}$, whence
\[-\frac{1}{\zeta-z}= \sum_{h=-m}^{-1} \frac{\stx_{w,h}(z)}{\stx_{w,h+1}(\zeta)}+\frac{\stx_{\bar w,m}(\zeta)}{\stx_{\bar w,m}(z)}\, \frac{1}{z-\zeta}\]
for all $m\geq 1$. Thus, for all $m,n \in \nn$ with $m \geq 1$,
\begin{align*}
g(z) &= \sum_{k=0}^n\frac{\stx_{w,k}(z)}{2\pi i}\int_{\partial \Sto(w,R_2)}\frac{g(\zeta)}{\stx_{w,k+1}(\zeta)}\, d\zeta + \frac{\stx_{w,n+1}(z)}{2\pi i}\int_{\partial \Sto(w,R_2)}\frac{g(\zeta)}{\stx_{w,n+1}(\zeta)}\, \frac{d\zeta}{\zeta-z}\\
&+\sum_{h=-m}^{-1} \frac{\stx_{w,h}(z)}{2\pi i}\int_{\partial \Sto(w,R_1)}\frac{g(\zeta)}{\stx_{w,h+1}(\zeta)}\, d\zeta + \frac{1}{2\pi i}\frac{1}{\stx_{\bar w,m}(z)}\int_{\partial \Sto(w,R_1)}{g(\zeta)}\, \frac{\stx_{\bar w,m}(\zeta)}{z-\zeta}\, d\zeta\,.
\end{align*}
If we set 
\[\mr{F}_n(z):= \frac{\stx_{w,n+1}(z)}{2\pi i}\int_{\partial \Sto(w,R_2)} \frac{g(\zeta)}{\stx_{w,n+1}(\zeta)}\, \frac{d\zeta}{\zeta-z},\]
\[\mr{F}_{-m}(z):= \frac{1}{2\pi i} \frac{1}{\stx_{\bar w,m}(z)} \int_{\partial \Sto(w,R_1)} g(\zeta)\frac{\stx_{\bar w,m}(\zeta)}{z-\zeta} \,{d\zeta}\]
and if we take into account that~\eqref{complexsphericalnumber} is independent of the choice of $r$, we conclude that
\[g(z) = \sum_{h=-m}^n  \stx_{w,h}(z) c_h + \mr{F}_n(z) + \mr{F}_{-m}(z)\,.\]
If we set for $t=1,2$
\[M_t :=  \frac{1}{2\pi}\,\underset{\partial \Sto(w,{R_t})}{\max}|g|\ \frac{\mr{length}(\partial \Sto(w,{R_t}))}{ \mr{dist}(z,\partial \Sto(w,{R_t}))}\]
then for all $k\geq 1$
\begin{align*}
\left|F_{2k-1}(z)\right| &\leq M_2 \left(\frac{\sto(z,w)}{{R_2}}\right)^{2k}\,,\\
\left|F_{2k}(z)\right| &\leq M_2 \left(\frac{\sto(z,w)}{{R_2}}\right)^{2k}\frac{\sqrt{\sto(z,w)^2+\beta^2}+\beta}{\sqrt{{R_2}^2+\beta^2}-\beta}\, ,
\end{align*}
where we have applied Lemma~\ref{technical} twice, with $\beta:= |\im(w)|$. Similarly,
\begin{align*}
\left|F_{-2k}(z)\right| &\leq M_1 \left(\frac{{R_1}}{\sto(z,w)}\right)^{2k}\,,\\
\left|F_{-2k-1}(z)\right| &\leq M_1 \left(\frac{{R_1}}{\sto(z,w)}\right)^{2k} \frac{\sqrt{{R_1}^2+\beta^2}+\beta}{\sqrt{\sto(z,w)^2+\beta^2}-\beta}\, ,
\end{align*}
which completes the proof.
\end{proof}

We now turn back to the general case.

\begin{theorem} \label{Laurent-spherical-expansion} 
Let $f: \OO \lra A$ be a slice regular function, let $y \in Q_A$ and let $r_1,r_2 \in [0,+\infty]$ with $r_1 < r_2$ such that $\Sto(y,r_1,r_2) \subseteq \OO$. Then
\begin{equation}\label{sphericalexpansion}
 f(x)=\sum_{n \in \zz}\stx_{y,n}(x)\cdot c_n
\end{equation}
in $\Sto(y,r_1,r_2)$ with
\begin{equation}\label{sphericalnumber}
c_n:=(2\pi J)^{-1}\int_{\partial \Sto_J(y,r)}d\zeta \, (\stx_{y,n+1}(\zeta))^{-1} f(\zeta),
\end{equation}
where $J \in \s_A$ is such that $y \in \cc_J$ and $r$ is in the interval $(r_1,r_2)$. If $r_1=0$, then we call \eqref{sphericalexpansion} \emph{the spherical Laurent expansion of $f$ at $y$} and each coefficient $c_n$ \emph{the $\mathrm{n}^{\mathrm{th}}$-spherical number of $f$ at $y$.} If, moreover, $\Sto(y,r_2)\subseteq \OO$, then $c_n=0$ for all $n<0$ and the expansion is valid in the whole Cassini ball $\Sto(y,r_2)$.
\end{theorem}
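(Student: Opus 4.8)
The plan is to reduce the statement to the already-settled complex case (Lemma~\ref{lem:C}) by means of a splitting basis, exactly as in the proof of Theorem~\ref{Laurent}, and then to upgrade the resulting slicewise equality to the whole Cassini shell by invoking the Abel-type Theorem~\ref{thm:Abel-spherical} together with the Representation Formula~\eqref{rep2}. Since $\Sto(y,r_1,r_2)$ is circular and $\sto(\cdot,y)$ is constant on spheres (because $\Delta_y$ is slice preserving, so $n(\Delta_y(\alpha+\beta I))$ does not depend on $I$), every sphere contained in the shell meets $\cc_J$ in a conjugate pair, which makes the final gluing step a routine application of \eqref{rep2}.

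First I would fix $J \in \s_A$ with $y \in \cc_J$, choose a splitting basis $\{1,J,J_1,JJ_1,\ldots,J_h,JJ_h\}$, and write $f_{|_{\OO_J}} = \sum_{k=0}^h f_k J_k$ with each $f_k:\OO_J \to \cc_J$ holomorphic (Lemma~\ref{splitting}). The inclusion $\Sto(y,r_1,r_2)\subseteq\OO$ gives $\Sto_J(y,r_1,r_2)\subseteq\OO_J$, so Lemma~\ref{lem:C} applies to each $f_k$ with $w=y$, producing complex coefficients $c_{n,k}\in\cc_J$, independent of $r\in(r_1,r_2)$ and given by \eqref{complexsphericalnumber}, with $f_k(z)=\sum_n \stx_{y,n}(z)\,c_{n,k}$ on $\Sto_J(y,r_1,r_2)$. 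Setting $c_n:=\sum_{k=0}^h c_{n,k}J_k\in A$ and recombining yields $f(z)=\sum_{n\in\zz}\stx_{y,n}(z)\,c_n$ for all $z\in\Sto_J(y,r_1,r_2)$. Here I swap the finite $k$-sum past the $n$-series and collapse $(\stx_{y,n}(z)\,c_{n,k})J_k=\stx_{y,n}(z)(c_{n,k}J_k)$ using that $\stx_{y,n}(z),c_{n,k}\in\cc_J$, so the relevant associators vanish by Artin's theorem, precisely as in Theorem~\ref{Laurent} and Lemma~\ref{productpreservedslice}.

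Next I would pass from the slice to the full shell. The $\cc_J$-convergence just obtained forces, via the estimates underlying Theorem~\ref{thm:Abel-spherical}, the bounds $\limsup_m\|c_{-m}\|_A^{1/m}\le r_1$ and $\limsup_n\|c_n\|_A^{1/n}\le 1/r_2$; hence the \emph{slice-product} series $\mr{S}(x)=\sum_n \stx_{y,n}(x)\cdot c_n$ has a domain of convergence containing $\Sto(y,r_1,r_2)$ and defines a slice regular function there. Now $\mr{S}$ and $f$ are slice functions agreeing on $\Sto_J(y,r_1,r_2)$, a set preserved by $z\mapsto z^c$ (since $\sto(z^c,y)=\sto(z,y)$) that meets each sphere $\s_w\subseteq\Sto(y,r_1,r_2)$ in a conjugate pair $\{w_J,w_J^c\}$; the Representation Formula~\eqref{rep2} then forces $\mr{S}=f$ throughout the circular set $\Sto(y,r_1,r_2)$, which is \eqref{sphericalexpansion}.

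Finally, to match the coefficient formula \eqref{sphericalnumber} I would unwind the $A$-valued line integral of Definition~\ref{lineintegral}. On $\partial\Sto_J(y,r)$ one has $\sto(\cdot,y)=r>0$, so $\stx_{y,n+1}(\zeta)\in\cc_J\setminus\{0\}$ is invertible and $(\stx_{y,n+1}(\zeta))^{-1}f(\zeta)=\sum_k\big((\stx_{y,n+1}(\zeta))^{-1}f_k(\zeta)\big)J_k$ with each summand $\cc_J$-valued; Definition~\ref{lineintegral} then splits the integral along the $J_k$ and, after moving the central factor $(2\pi J)^{-1}\in\cc_J$ inside (again by Artin), returns $\sum_k c_{n,k}J_k=c_n$ under the identification $\cc_J\cong\cc$, $J\leftrightarrow i$. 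Independence of $r$ and the vanishing $c_n=0$ for $n<0$ when $\Sto(y,r_2)\subseteq\OO$ descend directly from the corresponding assertions of Lemma~\ref{lem:C} applied to each $f_k$. I expect the step requiring the most care to be this last identification, where the nonassociativity of $A$ must be tamed throughout by Artin's theorem and by the fact that $\Delta_y$, the $\stx_{y,n}$ and $(2\pi J)^{-1}$ all lie in the commutative slice $\cc_J$; the remainder is a faithful transcription of the proof of Theorem~\ref{Laurent}, with Lemma~\ref{lem:C} and Theorem~\ref{thm:Abel-spherical} replacing their non-spherical analogues.
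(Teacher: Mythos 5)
Your proposal is correct and follows essentially the same route as the paper: split $f$ along a splitting basis, apply Lemma~\ref{lem:C} to each holomorphic component, reassemble the coefficients $c_n=\sum_k c_{n,k}J_k$ using Artin's theorem, extend from the slice $\Sto_J(y,r_1,r_2)$ to the full Cassini shell by convergence of the slice-product series plus the Representation Formula~\eqref{rep2}, and recover formula~\eqref{sphericalnumber} via Lemma~\ref{productpreservedslice} and Definition~\ref{lineintegral}. The only cosmetic difference is that you justify convergence on the shell through the limsup radius bounds of Theorem~\ref{thm:Abel-spherical} (as in the proof of Theorem~\ref{Laurent}), whereas the paper invokes Corollary~\ref{domainconvergence} directly; these amount to the same argument.
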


\begin{proof}
Let $\{1,J, J_1, J J_1,\ldots, J_h, JJ_h\}$ be a splitting basis of $A$ associated with $J$ and let $\{f_k:\OO_J \lra \cc_J\}_{k=0}^h$ be the holomorphic functions such that $f_{|_{\OO_J}}=\sum_{k=0}^h f_k J_k$, where $J_0:=1$. Applying Lemma \ref{lem:C} to each $f_k$, we obtain that
\begin{equation*}
f_k(z)=\sum_{n \in \zz} \stx_{y,n}(z) c_{n,k}
\end{equation*}
for all $z \in \Sto_J(y,r_1,r_2)$, where each $c_{n,k} \in \cc_J$ is defined by an appropriate instance of formula~\eqref{complexsphericalnumber}. Thus, for all $z \in \Sto_J(y,r_1,r_2)$,
\begin{equation*}
f(z) = \sum_{k=0}^h \sum_{n \in \zz} \left(\stx_{y,n}(z) c_{n,k}\right) J_k = \sum_{k=0}^h \sum_{n \in \zz}\stx_{y,n}(z)  \left(c_{n,k} J_k \right) = \sum_{n \in \zz} \stx_{y,n}(z) c_n
\end{equation*}
with $c_n:=\sum_{k=0}^hc_{n,k}J_k$. In the second equality, we have used the fact that, since $\stx_{y,n}(z),c_{n,k} \in \cc_J$, the associator $(\stx_{y,n}(z),c_{n,k},J_k)=0$ vanishes.
By Lemma~\ref{productpreservedslice} and by Definition~\ref{lineintegral}, the coefficient $c_n$ can also be computed by formula~\eqref{sphericalnumber}. 

By Corollary~\ref{domainconvergence}, the series $\sum_{n \in \zz} \stx_{y,n}(x)\cdot c_n$ converges in $\Sto(y,r_1,r_2)$ to a slice regular function $g$. Finally, since $g$ and $f$ coincide in $\Sto_J(y,r_1,r_2)$, they coincide in 
$\Sto(y,r_1,r_2)$ by the Representation Formula~\eqref{rep2}.
\end{proof}

\begin{remark}
Suppose that the hypotheses of Theorem~\ref{Laurent-spherical-expansion} hold. For each $J \in \s_A$, we have $f(\Sto_J(y,r_1,r_2))\subseteq\cc_J$ if, and only if, $c_n \in \cc_J$ for all $n \in \zz$.
\end{remark}

We point out that the spherical Laurent expansions at a point $y$ and at $y' \in \s_y$ are related, according to Remark~\ref{rearrange}. Moreover, the same remark allows an alternative expansion depending only on the sphere $\s_y$ considered rather than on a specific point $y$:

\begin{remark}\label{rmk:rearrange}
Expansion~\eqref{sphericalexpansion} can be rearranged into the form
\begin{equation} \label{eq:spherical}
f(x)=\sum_{k \in \zz}\Delta_y^k(x)(xu_k+v_k)
\end{equation}
for all $x \in \Sto(y,r_1,r_2)$.  If $r_1=0$, then \eqref{eq:spherical} is called \emph{the spherical Laurent expansion of $f$ at $\s_y$}. Moreover, $(u_k,v_k)$ is called \emph{the $\mr{k}^{\mr{th}}$-spherical pair of $f$ at $\s_y$}. 

For all $k \in \zz$,
\begin{equation} \label{eq:u_n} 
u_k=(2\pi J)^{-1}\int_{\partial \Sto_J(y,r)}d\zeta \, \Delta_y(\zeta)^{-k-1} f(\zeta)
\end{equation}
and
\begin{equation} \label{eq:v_n} 
v_k=(2\pi J)^{-1}\int_{\partial \Sto_J(y,r)}d\zeta \, (\zeta-2\re(y))\, \Delta_y(\zeta)^{-k-1} f(\zeta).
\end{equation}
For each $J \in \s_A$, we have $f(\Sto_J(y,r_1,r_2))\subseteq\cc_J$ if, and only if, $u_k,v_k \in \cc_J$ for all $k \in \zz$. As a consequence, when the cardinality of $\s_A$ is larger than $2$, then $f$ is slice preserving in $\Sto(y,r_1,r_2)$ if, and only if, $u_k,v_k \in \rr$ for all $k \in \zz$.
\end{remark}

We point out that the last statement in the previous remark is actually true for all cardinalities of $\s_A$, as a consequence of the next observation.

\begin{remark}\label{schwarzreflection}
Let $f \in \mc{S}(\OO)$ and fix $J \in \s_A$. The function $f$ is slice preserving if, and only if, the following properties hold:
\begin{enumerate}
\item $f$ maps $\OO_J$ into $\cc_J$;
\item $f$ has the Schwarz reflection symmetry in $\OO_J$, i.e., $f(z^c) = f(z)^c$ for all $z \in \OO_J$.
\end{enumerate}
\end{remark}


\section{Relation between the two expansions}\label{sec:relation}

In the present section, we relate the spherical Laurent expansion at $y$ to the Laurent expansions at $y$ and $y^c$. Even though the resulting combinatorial formulae are quite technical, both the next theorem and the subsequent corollary will prove extremely useful to classify singularities.

\begin{theorem}\label{thm:relation}
Let $f:\OO \lra A$ be a slice regular function and let $y \in Q_A$. Suppose that $\OO$ contains a nonempty Cassini shell $\Sto(y,r_1,r_2)$, which in turn contains nonempty $\Sigma(y,R_1,R_2)$ and $\Sigma(y^c,R_1,R_2)$. Consider the Laurent expansions
\begin{align*}
f(x)&=\sum_{n\in\zz}(x-y)^{\punto n}\cdot a_n,\\
f(x)&=\sum_{n\in\zz}(x-y^c)^{\punto n}\cdot b_n
\end{align*}
and the spherical Laurent expansion
\begin{align*}
f(x)&=\sum_{n\in\zz}\stx_{y,n}(x)\cdot c_n.
\end{align*}
If $y \in \rr$, then $a_n=b_n=c_n$ for all $n \in \zz$. If, on the other hand, $y \in Q_A \setminus \rr$, then for every $m,n \in \zz$, the following relations hold:
\begin{equation}\label{an}
a_n=\sum_{m=-\infty}^{n}(y-y^c)^{2m-n-1}\left(\binom{m-1}{n-m}c_{2m-1} +\binom{m}{n-m}(y-y^c)c_{2m}\right)
\end{equation}
and
\begin{align}
c_{2m}&=\sum_{\ell=-\infty}^m (y-y^c)^{\ell-2m-1}\left( \binom{-m}{m-\ell} (y-y^c) a_\ell + (-1)^{\ell-1}\binom{-m-1}{m-\ell} b_{\ell-1} \right)\label{s2m}\\
c_{2m+1}&=\sum_{\ell=-\infty}^m(y-y^c)^{\ell-2m-1}\binom{-m-1}{m-\ell}\left(a_\ell + (-1)^{\ell-1}b_{\ell} \right).\label{s2m1}
\end{align}
\end{theorem}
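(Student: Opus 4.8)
\emph{Reduction to the complex case.} The plan is first to reduce the whole statement to three Laurent-type expansions of a single holomorphic function of one complex variable. Fix $J\in\s_A$ with $y\in\cc_J$ and a splitting basis $\{1,J,J_1,JJ_1,\dots,J_h,JJ_h\}$, and write $f_{|\OO_J}=\sum_{k=0}^h f_kJ_k$ with $f_k:\OO_J\to\cc_J$ holomorphic, as in Lemma~\ref{splitting}. By Lemma~\ref{productpreservedslice}, on $\cc_J$ each of the three slice products restricts to the pointwise product, so writing the coefficients in components $a_n=\sum_k a_{n,k}J_k$, $b_n=\sum_k b_{n,k}J_k$, $c_n=\sum_k c_{n,k}J_k$ (with $a_{n,k},b_{n,k},c_{n,k}\in\cc_J$), the three expansions of $f$ restrict to the corresponding expansions of each $f_k$ with $\cc_J$-valued coefficients. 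Because $y-y^c\in\cc_J$ and the binomial factors are integers, each side of~\eqref{an}--\eqref{s2m1} decomposes along the basis $\{J_k\}$ into $h+1$ copies of the \emph{same} identity with $\cc_J$-valued coefficients (Artin's theorem supplies the needed associativity). It therefore suffices to prove the relations for $A=\cc_J\cong\cc$; from now on all functions are holomorphic and all coefficients are complex.

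\emph{The real case and the forward direction.} If $y\in\rr$ then $y^c=y$, $\Delta_y(x)=(x-y)^2$, and hence $\stx_{y,n}(x)=(x-y)^n$ for every $n$; the three expansions literally coincide and the uniqueness part of Theorem~\ref{Laurent} yields $a_n=b_n=c_n$. Assume now $y\notin\rr$ and set $w:=y-y^c$, so that on $\cc_J$ one has $\stx_{y,2k}(z)=(z-y)^k(z-y^c)^k$ and $\stx_{y,2k+1}(z)=(z-y)^{k+1}(z-y^c)^k$. To obtain~\eqref{an} I would substitute into the spherical expansion the binomial series
\[
(z-y^c)^k=\bigl((z-y)+w\bigr)^k=\sum_{j\ge 0}\binom{k}{j}w^{k-j}(z-y)^j,
\]
valid for $|z-y|<|w|$ and for every $k\in\zz$, and then collect the powers of $(z-y)$. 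Treating the even terms ($\stx_{y,2k}$) and odd terms ($\stx_{y,2k+1}$) separately and reindexing reproduces exactly the two summands of~\eqref{an}, after which the uniqueness of the Laurent coefficients at $y$ (Theorem~\ref{Laurent}) identifies the collected coefficient of $(z-y)^n$ with $a_n$.

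\emph{The inverse direction.} For~\eqref{s2m} and~\eqref{s2m1} I would start from the integral representation~\eqref{sphericalnumber} of $c_n$ along the boundary of a Cassini ball, which on $\cc_J$ is a level curve of $\zeta\mapsto|\zeta-y|\,|\zeta-y^c|$ enclosing both $y$ and $y^c$. Since $f$ is holomorphic on the Cassini shell and expands into Laurent series in annuli around $y$ and around $y^c$, I would deform this contour, within the holomorphy domain of $f$, into the sum of a positively oriented circle about $y$ and one about $y^c$. On the first circle I substitute $f(\zeta)=\sum_\ell(\zeta-y)^\ell a_\ell$ together with the expansion of the rational kernel $\stx_{y,n+1}(\zeta)^{-1}$ in powers of $(\zeta-y)$ (expanding its factor $(\zeta-y^c)^{\mp}$ by the same binomial series as above, now with negative upper index); on the second circle I substitute $f(\zeta)=\sum_\ell(\zeta-y^c)^\ell b_\ell$ and expand in powers of $(\zeta-y^c)$. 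Computing the residues term by term produces two contributions, one in the $a_\ell$ and one in the $b_\ell$, whose generalized binomial coefficients $\binom{-m}{m-\ell}$, $\binom{-m-1}{m-\ell}$ and powers of $w$ are precisely those of~\eqref{s2m}--\eqref{s2m1}, the sign $(-1)^{\ell-1}$ arising from the shift $z-y=(z-y^c)-w$ used when expanding about $y^c$. (Equivalently, one may derive~\eqref{s2m}--\eqref{s2m1} by applying~\eqref{an} with $y$ and $y^c$ interchanged, using Remark~\ref{rearrange}(1) to pass between the spherical coefficients centered at $y$ and at $y^c$, and solving the resulting triangular linear system for the $c_n$.)

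\emph{Main difficulty.} The analytic heart of the argument is the justification of these formal manipulations: that the annulus $R_1<|z-y|<|w|$ and its $y^c$-analogue are nonempty and contained in the Cassini shell, so that the binomial series and the interchanges of summation with integration converge absolutely; and that the Cassini contour of~\eqref{sphericalnumber} genuinely deforms, inside the holomorphy domain of $f$, into the two circles. Granting total convergence on compacta (Theorems~\ref{thm:Abel-power} and~\ref{Laurent-spherical-expansion}), the rearrangements become legitimate and what remains is the purely combinatorial verification of the binomial identities, which I expect to be routine but lengthy.
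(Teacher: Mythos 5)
Your proposal follows essentially the same route as the paper's proof: reduction to a single holomorphic function on a slice $\cc_J$ via a splitting basis and Lemma~\ref{productpreservedslice}, then extraction of \eqref{an} by binomially expanding powers of $(\xi-y^c)$ in powers of $(\xi-y)$, and of \eqref{s2m}--\eqref{s2m1} by reading the integral \eqref{sphericalnumber} as a sum of residues at $y$ and $y^c$ computed from the two Laurent expansions of $f$. The only cosmetic difference is that you justify \eqref{an} by series rearrangement plus uniqueness of Laurent coefficients, whereas the paper integrates the spherical expansion term by term against $(\xi-y)^{-n-1}$ over a circle centered at $y$ and evaluates the resulting integrals by the residue theorem; the underlying binomial bookkeeping is identical.
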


\begin{proof}
Using a splitting basis, the desired formulae will be proven if we can prove them in the special case when $\{a_n\}_{n\in \zz},\{b_n\}_{n\in \zz},\{c_n\}_{n\in \zz}$ are all included in $\cc_J$. Equivalently, we may restrict to the case when $A = \cc$ and consider a holomorphic function $f:\Sto(w,r_1,r_2)\rightarrow\cc$ with $w=\alpha+i\beta$. According to our hypotheses, $\Sto(w,r_1,r_2)$ contains two nonempty annuli, centered at $w$ and $\overline w$ respectively.

The equality $f(\xi)=\sum_{\ell\in\zz}\stx_{w,\ell}(\xi) c_\ell$ implies that $a_n=\sum_{\ell\in\zz}d_{n,\ell}c_\ell$ where, for any circle $\gamma$ in $\Sto(w,r_1,r_2)$ centered at $w$, 
\[
d_{n,\ell}:=\frac1{2\pi i}\int_{\gamma}\frac{\stx_{w,\ell}(\xi)}{(\xi-w)^{n+1}}d\xi\,.
\]
We can compute the integrals  $d_{n,\ell}$ by means of the complex residue theorem. 
\begin{itemize}
\item In the even case, $\ell=2m$, we get
\begin{equation}\label{sum1}
\stx_{w,2m}(\xi)=(\xi-w)^m(\xi-\overline w)^m= \sum_{k\ge0}\binom{m}{k}(w-\overline w)^{m-k}(\xi-w)^{m+k}\,,
\end{equation}
whence
\[\frac{\stx_{w,2m}(\xi)}{(\xi-w)^{n+1}} = \sum_{k\ge0}\binom{m}{k}(w-\overline w)^{m-k}(\xi-w)^{m+k-n-1}\,.\]
By the complex residue theorem,
\[
d_{n,2m}:=\binom{m}{n-m}(w-\overline w)^{2m-n}\,
\]
for $m\leq n$ and $d_{n,2m}=0$ otherwise.
\item In the odd case, $\ell=2m-1$, we get that 
\[\frac{\stx_{w,2m-1}(\xi)}{(\xi-w)^{n+1}} =(\xi-w)^{m-n-1}(\xi-\overline w)^{m-1}= \sum_{k\ge0}\binom{m-1}{k}(w-\overline w)^{m-k-1}(\xi-w)^{m+k-n-1}\,.\]
Hence,
\[d_{n,2m-1}=\binom{m-1}{n-m}(w-\overline w)^{2m-n-1}\]
for $m\leq n$ and $d_{n,2m-1}=0$ otherwise.
\end{itemize}
The equality $a_n=\sum_{\ell\in \zz}d_{n,\ell}c_\ell=\sum_{m=-\infty}^{n}(d_{n,2m-1}c_{2m-1} +d_{n,2m}c_{2m})$ yields formula \eqref{an}.
	 
Now we prove \eqref{s2m} and \eqref{s2m1}. We recall from \eqref{sphericalnumber} that 
\[ c_n=(2\pi i)^{-1}\int_{\partial \Sto(w,r)}\frac{f(\xi)}{\stx_{w,n+1}(\xi)} \, d\xi \, ,\]
so that $c_n$ is the sum of the residues of the holomorphic function ${f(\xi)}/{\stx_{w,n+1}(\xi)}$ at $w$ and $\overline w$.
\begin{itemize}
\item For $n=2m$ even, we have
\begin{align*}
\frac {f(\xi)}{\stx_{w,2m+1}(\xi)}&=\frac{\sum_{\ell\in\zz}(\xi- w)^\ell a_\ell}{(\xi-w)^{m+1}(\xi-\overline w)^{m}}\\
&=\sum_{\ell\in\zz}a_\ell \sum_{k\ge0}\binom{-m}k (w-\overline w)^{-m-k}(\xi-w)^{k+\ell-m-1}\,,
\end{align*}
whose residue at $w$ is
\begin{equation}\label{res1}
\sum_{\ell=-\infty}^m \binom{-m}{m-\ell} (w-\overline w)^{\ell-2m} a_\ell.
\end{equation}
Similarly, at $\overline w$ we have the Laurent expansion
\begin{align*}
\frac {f(\xi)}{\stx_{w,2m+1}(\xi)}&=\frac{\sum_{\ell\in\zz}(\xi- \overline w)^\ell b_\ell}{(\xi-w)^{m+1}(\xi-\overline w)^{m}}\\
&=\sum_{\ell\in\zz}b_\ell \sum_{k\ge0}\binom{-m-1}k (\overline w-w)^{-m-k-1}(\xi-\overline w)^{k+\ell-m},
\end{align*}
whose residue at $\overline w$ is 
\begin{equation}\label{res2}
\sum_{\ell=-\infty}^{m-1}\binom{-m-1}{m-\ell-1} (\overline w-w)^{\ell-2m}b_{\ell} = \sum_{\ell=-\infty}^{m}\binom{-m-1}{m-\ell} (\overline w-w)^{\ell-2m-1}b_{\ell-1}.
\end{equation}
Putting together \eqref{res1} and \eqref{res2}, we get \eqref{s2m}.

\item For $n=2m+1$ odd, it holds
\begin{align*}
\frac {f(\xi)}{\stx_{w,2m+2}(\xi)}&=\frac{\sum_{\ell\in\zz}(\xi- w)^\ell a_\ell}{(\xi-w)^{m+1}(\xi-\overline w)^{m+1}}\\
&=\sum_{\ell\in\zz}a_\ell \sum_{k\ge0}\binom{-m-1}k (w-\overline w)^{-m-k-1}(\xi-w)^{k+\ell-m-1}\,,\\
\frac {f(\xi)}{\stx_{w,2m+2}(\xi)}&=\frac{\sum_{\ell\in\zz}(\xi- \overline w)^\ell b_\ell}{(\xi-w)^{m+1}(\xi-\overline w)^{m+1}}\\
&=\sum_{\ell\in\zz}b_\ell \sum_{k\ge0}\binom{-m-1}k (\overline w-w)^{-m-k-1}(\xi-\overline w)^{k+\ell-m-1}\,.
\end{align*}
Therefore the residues at $w$ and  $\overline w$ are, respectively,
\[\sum_{\ell=-\infty}^m\binom{-m-1}{m-\ell} (w-\overline w)^{\ell-2m-1}a_\ell
\text{\quad and\quad}
\sum_{\ell=-\infty}^{m}\binom{-m-1}{m-\ell} (\overline w-w)^{\ell-2m-1}b_{\ell}\,,
\]
which yield \eqref{s2m1}.\qedhere
\end{itemize}
\end{proof}

\begin{corollary}\label{zerocoefficients}
In the hypotheses of Theorem~\ref{thm:relation} and under the assumption $y \in Q_A \setminus \rr$, the following implications hold.
\begin{enumerate}
\item If $c_p = 0$ for all $p<2n_0-1$, then $a_n =0$ for all $n< n_0$.
\item If $a_\ell= 0 = b_\ell$ for all $\ell<m_0$, then $c_p = 0$ for all $p< 2m_0$.
\end{enumerate}
\end{corollary}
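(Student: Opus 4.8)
The plan is to read off both implications directly from the explicit combinatorial formulas \eqref{an}, \eqref{s2m} and \eqref{s2m1} established in Theorem~\ref{thm:relation}. Once $y \in Q_A \setminus \rr$, those formulas express each coefficient of one expansion as a lower-bounded sum of coefficients of the other, so the entire argument reduces to bookkeeping on the ranges of indices that can appear. I would not attempt any fresh estimate or integral computation; everything needed is already encoded in the three relations.

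For the first implication, I would fix an index $n < n_0$ and inspect \eqref{an}. The summation variable $m$ ranges over $m \leq n$, and for each such $m$ the only spherical coefficients entering $a_n$ are $c_{2m-1}$ and $c_{2m}$. The largest index of $c$ occurring is therefore $2m = 2n$, attained at $m = n$. Since $n \leq n_0 - 1$, this gives $2m \leq 2n_0 - 2 < 2n_0 - 1$, and likewise $2m-1 \leq 2n_0 - 3 < 2n_0 - 1$. Hence every $c$ appearing in \eqref{an} carries an index strictly below $2n_0 - 1$ and so vanishes by hypothesis, forcing $a_n = 0$.

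For the second implication, I would argue analogously using \eqref{s2m} and \eqref{s2m1}, splitting according to the parity of $p < 2m_0$. If $p = 2m$ is even, then $m \leq m_0 - 1$, and the sum in \eqref{s2m} runs over $\ell \leq m$; the coefficients appearing are $a_\ell$ with $\ell \leq m_0 - 1 < m_0$ together with $b_{\ell-1}$ with $\ell - 1 \leq m_0 - 2 < m_0$, all of which vanish, so $c_{2m} = 0$. If $p = 2m+1$ is odd, then $2m+1 < 2m_0$ again forces $m \leq m_0 - 1$, and the sum in \eqref{s2m1} over $\ell \leq m$ involves only $a_\ell$ and $b_\ell$ with $\ell < m_0$, all zero; hence $c_{2m+1} = 0$.

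The computation is essentially routine once the formulas of Theorem~\ref{thm:relation} are in hand, so I do not expect a genuine obstacle. The only points requiring care are the parity split in the second implication and the verification that the boundary index $2m = 2n$ in the first implication still falls \emph{strictly} below $2n_0 - 1$; this holds precisely because the hypothesis threshold $2n_0 - 1$ is odd while $2n$ is even with $n < n_0$, so the even/odd mismatch prevents any borderline index from escaping the vanishing range.
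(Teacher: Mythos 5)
Your proposal is correct and coincides with the argument the paper intends: Corollary~\ref{zerocoefficients} is stated without a written proof precisely because it follows from the index bookkeeping you carry out on formulas \eqref{an}, \eqref{s2m} and \eqref{s2m1}, namely that each $a_n$ with $n<n_0$ involves only $c_p$ with $p\leq 2n$, and each $c_{2m}$ or $c_{2m+1}$ with index below $2m_0$ involves only $a_\ell$, $b_{\ell-1}$ or $b_\ell$ with indices below $m_0$. Your parity observation at the boundary (that $2n\leq 2n_0-2<2n_0-1$) correctly handles the only point where care is needed.
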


\begin{example}
The function $f(x) = (x+J)^{-\punto} = (x^2+1)^{-1}(x-J)$ considered in Example~\ref{ex:expansion} has, at $y=J$
\begin{itemize}
\item $a_n = 0$ for all $n<0$ and $a_n = (-1)^n(2J)^{-n-1}$ for all $n\geq0$.
\item $b_n = 0$ for all $n\neq -1$ and $b_{-1} = 1$.
\item $c_n = 0$ for all $n\neq -1$ and $c_{-1} = 1$.
\end{itemize}
\end{example}


\section{Classification of singularities}\label{sec:classification}

In the present section, we will undertake the classification of singularities by taking advantage of the spherical Laurent series. This is always possible because of the next remark.

\begin{remark}\label{singularityadmitsspherical}
Let $y\in Q_A$. If a circular open set $\OO$ in $Q_A$ includes $\Sigma(y,0,R)$ for some $R>0$, then it includes $\Sto(y,0,r)$ for some $r>0$. Indeed, the continuous function $Q_A \to \rr$ defined by $x \mapsto \sto(x,y)$ vanishes on $\s_y$, it is positive elsewhere and it tends to $+\infty$ as $\|x\|_A \to +\infty$.
\end{remark}

This allows, in particular, the next definition.

\begin{definition}
Let $f: \OO \lra A$ be a slice regular function, let $y \in Q_A$ be a singularity for $f$ and let
\[
f(x)= \sum_{n \in \zz}\stx_{y,n}(x)\cdot c_n = \sum_{k \in \zz}\Delta_y^k(x)(xu_k+v_k),
\]
be the spherical Laurent expansion of $f$ at $y$ and at $\s_y$, respectively. We define the \emph{spherical order} of $f$ at $\s_y$ as the smallest even natural number $n_0$ such that $c_n=0$ for all $n<-n_0$. It is denoted by $\ord_f(\s_y)$. If such an $n_0$ exists, then $k_0=\frac{n_0}{2}$ is the smallest natural number such that $u_k=0=v_k$ for all $k<-k_0$. If no such $n_0$ exists, then we set $\ord_f(\s_y) := +\infty$.
\end{definition}

In the forthcoming Remark~\ref{rmk:semiregulartoquotient}, we will see that this definition is consistent with the one given in the quaternionic case.

\begin{remark} \label{sphericalvsisolatedorder}
Let $f: \OO \lra A$ be a slice regular function and let $y \in Q_A$ be a singularity for $f$. If $y \not\in \rr$, then Corollary~\ref{zerocoefficients} immediately implies that
\[
\ord_f(\s_y)=2\max\big\{\ord_f(y),\ord_f(y^c)\big\}.
\]
If $y \in \rr$, then $\ord_f(\s_y)=\ord_f(y)$ if $\ord_f(y)$ is even and $\ord_f(\s_y)=\ord_f(y)+1$ otherwise.
\end{remark}

We are now ready to classify the singularities of slice regular functions. First, we consider the case of non real singularities. We recall the notation
\[
V(g) := \{x \in \OO : g(x)=0\}
\]
used for $g \in \mc{S}(\OO)$.

\begin{theorem}\label{sphericalclassification}
Let $\widetilde{\OO}$ be a circular open subset of $Q_A$, let $y \in \widetilde{\OO}$ and set $\OO:=\widetilde{\OO}\setminus \s_y$. Let $f: \OO \lra A$ be a slice regular function. Let $J \in \s_A$ be such that $y \in \cc_J\setminus \rr$ and set $\OO_J:=\OO \cap \cc_J$, $\widetilde{\OO}_J:=\widetilde{\OO} \cap \cc_J=\OO_J \cup \{y,y^c\}$, as usual, and $f_J:=f|_{\OO_J}$. Then one of the following assertions holds:
\begin{enumerate}
 \item Every point of $\s_y$ is a removable singularity for $f$.\\ This is equivalent to each of the following conditions:
  \begin{enumerate}
   \item[1a.] $\ord_f(\s_y)=0$.
   \item[1b.] $\ord_f(y)=0=\ord_f(y^c)$.
   \item[1c.] There exists a neighborhood $U_J$ of $\{y,y^c\}$ in $\widetilde{\OO}_J$ such that $\|f_J\|_A$ is bounded in $U_J \setminus \{y,y^c\}$.
   \item[1d.] There exists a neighborhood $U$ of $\s_y$ in $\widetilde{\OO}$ such that $\|f\|_A$ is bounded in $U \setminus \s_y$.
  \end{enumerate}
 \item Every point of $\s_y$ is a non removable pole for $f$.\\ This is equivalent to each of the following conditions:
  \begin{enumerate}
   \item[2a.] $\ord_f(\s_y)$ is finite and positive.
   \item[2b.] $\ord_f(y),\ord_f(y^c)$ are finite and at least one of them is positive.
   \item[2c.] There exists $k \in \nn \setminus \{0\}$ such that the function $\OO_J \lra A$ defined by 
   \[z \mapsto (z-y)^k(z-y^c)^kf_J(z)\]
   extends to a continuous function $g_J:\widetilde{\OO}_J \lra A$ with $g_J(y) \neq 0$ (which yields $\lim_{\OO_J \ni x \to y}\|f(x)\|_A = +\infty$) or $g_J(y^c) \neq 0$ (whence $\lim_{\OO_J \ni x \to y^c}\|f(x)\|_A=+\infty$).
   \item[2d.] There exists $k \in \nn \setminus \{0\}$ such that the function $\OO \lra A$ defined by
   \[x \mapsto \Delta_y(x)^kf(x)\]
   extends to a slice regular function $g \in \mc{SR}(\widetilde{\OO})$ that does not vanish identically in $\s_y$. For all $w \in \s_y \setminus V(g)$, we have that $2k=\ord_f(\s_y)=2\,\ord_f(w)$ and
\[
\lim_{\OO \ni x \to w} \|f(x)\|_A = +\infty.
\]
For all $w \in \s_y \cap V(g)$, the strict inequality $\ord_f(w)<k$ holds.
  \end{enumerate}
 \item For every $I \in \s_A$, the intersection $\s_y \cap \cc_I$ includes an essential singularity for~$f$.\\ This is equivalent to each of the following conditions: \begin{enumerate}
   \item[3a.] $\ord_f(\s_y)=+\infty$.
   \item[3b.] $\ord_f(y),\ord_f(y^c)$ are not both finite.
   \item[3c.] For all neighborhoods $U_J$ of $y$ in $\widetilde{\OO}_J$ (with $y^c \not \in U_J)$ and for all $k \in \nn$, 
    \[\sup_{z \in U_J \setminus \{y\}}\|(z-y)^kf(z)\|_A=+\infty;\]
   or the sentence is true with $y$ and $y^c$ swapped.
\item[3d.] For all neighborhoods $U$ of $\s_y$ in $\widetilde{\OO}$ and for all $k \in \nn$, \[\sup_{x \in U \setminus \s_y}\|\Delta_y(x)^kf(x)\|_A=+\infty.\]
 \end{enumerate}
\end{enumerate}
\end{theorem}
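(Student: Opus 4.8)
The plan is to organize everything around the trichotomy for the spherical order: $\ord_f(\s_y)=0$, or $0<\ord_f(\s_y)<+\infty$, or $\ord_f(\s_y)=+\infty$. These three possibilities are mutually exclusive and exhaustive, and they will be matched with conditions 1a, 2a, 3a respectively, which is what makes the three alternatives exhaustive and mutually exclusive. The backbone linking the ``$a$'' and ``$b$'' conditions in all three cases is the identity $\ord_f(\s_y)=2\max\{\ord_f(y),\ord_f(y^c)\}$ of Remark~\ref{sphericalvsisolatedorder} (valid since $y\notin\rr$): it yields at once $1a\Leftrightarrow 1b$, $2a\Leftrightarrow 2b$ and $3a\Leftrightarrow 3b$. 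A second recurring device is multiplication by the central, slice preserving polynomial $\Delta_y$: since $\Delta_y^{\punto j}\cdot\stx_{y,n}=\stx_{y,n+2j}$, multiplying the spherical Laurent expansion by $\Delta_y^{\punto j}$ shifts every spherical coefficient, so that $\Delta_y^{\punto j}\cdot f$ has spherical order $\max\{0,\ord_f(\s_y)-2j\}$. Finally I will repeatedly pass to a slice $\cc_I$ and to the holomorphic components relative to a splitting basis (Lemma~\ref{splitting} and Lemma~\ref{productpreservedslice}), where classical one-variable results (Riemann's removable singularity theorem and the pole/essential dichotomy via boundedness of $(z-w)^kf$) apply, lifting slice-wise conclusions back to $\OO$ through the Representation Formula~\eqref{rep2}.

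For Case 1 I will close the loop removable $\Rightarrow 1d\Rightarrow 1c\Rightarrow 1b\Leftrightarrow 1a\Rightarrow$ removable. The step $1a\Rightarrow$ removable holds because $\ord_f(\s_y)=0$ turns the expansion into a spherical series, whose domain of convergence is a Cassini ball $\Sto(y,r_2)$ by Theorem~\ref{thm:Abel-spherical}, i.e.\ a circular neighborhood of $\s_y$, so $f$ extends there; removable $\Rightarrow 1d$ is continuity of the extension on compacts, $1d\Rightarrow 1c$ is restriction, and $1c\Rightarrow 1b$ is Riemann's theorem applied to each component at $y$ and $y^c$. Here I will also record the crucial \emph{circularity principle} used throughout: a regular extension must live on a circular set, hence automatically extends across the whole of $\s_y$; thus removability at one point of $\s_y$ is equivalent to removability at every point. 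This is what upgrades ``$\ord_f(\s_y)=0$'' to ``every point of $\s_y$ is removable'', and conversely forbids any removable point in Cases 2 and 3.

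For Case 2 I will prove $2a\Rightarrow 2d\Rightarrow 2c\Rightarrow 2b\Leftrightarrow 2a$, the main statement then following from the circularity principle. Writing $k_0=\ord_f(\s_y)/2$, the shift property makes $g:=\Delta_y^{\punto k_0}\cdot f$ a spherical series, hence a slice regular function on a circular neighborhood of $\s_y$ by Theorem~\ref{thm:Abel-spherical}; minimality of $k_0$ forces $c_{-2k_0}$ and $c_{-2k_0+1}$ not to vanish simultaneously, so $g|_{\s_y}(x)=c_{-2k_0}+(x-y)c_{-2k_0+1}$ is not identically zero, giving the existence part of $2d$. For the order/limit claims I will use $f=\Delta_y^{-\punto k_0}\cdot g$, restrict to the slice through $w\in\s_y$ and factor $\Delta_y(z)=(z-w)(z-w^c)$: when $g(w)\neq 0$ the pole of $f$ at $w$ has order exactly $k_0$ with $\|f\|\to+\infty$, whereas a zero of $g$ at $w$ drops the order strictly below $k_0$. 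Then $2d\Rightarrow 2c$ is the slice identity $\Delta_y(z)^k=(z-y)^k(z-y^c)^k$, and $2c\Rightarrow 2b$ applies Riemann's theorem to the components of $(z-y)^k(z-y^c)^kf_J$ (finiteness of both point orders) together with the nonvanishing at $y$ or $y^c$ (positivity of at least one).

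Case 3 uses the same toolkit: $3a\Leftrightarrow 3b$ from Remark~\ref{sphericalvsisolatedorder}; $3b\Leftrightarrow 3c$ is the one-variable characterization of an essential singularity by unboundedness of all $(z-y)^kf_J$; and $3a\Leftrightarrow 3d$ follows from the shift property combined with the boundedness criterion of Case 1, since $\ord_f(\s_y)=+\infty$ makes every $\Delta_y^{\punto k}\cdot f$ have infinite spherical order, hence non removable, hence unbounded near $\s_y$. The main statement of Case 3 is the delicate point: since $\ord_f(\s_y)$ is an invariant of the sphere (Remark~\ref{rmk:rearrange}), writing $\s_y\cap\cc_I=\{w,w^c\}$ and applying Remark~\ref{sphericalvsisolatedorder} at $w$ gives $2\max\{\ord_f(w),\ord_f(w^c)\}=+\infty$, so one of the two conjugate points on every slice is essential. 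I expect the genuine obstacle to be the refined content of $2d$, namely tracking how $\ord_f(w)$ and the blow-up of $\|f\|$ vary as $w$ ranges over $\s_y$: this is exactly where the zero set $V(g)$ enters and where the uniform order $k_0$ may drop at exceptional points. Keeping the three bookkeepings consistent — the point orders at $y$ and $y^c$, the spherical order, and the shift induced by $\Delta_y^{\punto k_0}$ — and ensuring each slice-wise conclusion lifts via~\eqref{rep2}, will be the careful part; once these reductions are in place the remaining equivalences are classical one-variable complex analysis.
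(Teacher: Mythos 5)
Your proposal is correct, and its architecture coincides with the paper's own proof: the trichotomy on $\ord_f(\s_y)$ matched with conditions 1a/2a/3a, the extension $g:=\Delta_y^k f$ obtained by shifting the spherical Laurent expansion and invoking Theorem~\ref{thm:Abel-spherical} to get slice regularity on a circular neighborhood of $\s_y$, the use of Remark~\ref{sphericalvsisolatedorder} to link spherical and pointwise orders, and the reduction of the slice-wise conditions 1c, 2c, 3c to classical one-variable facts through a splitting basis, lifted back via the Representation Formula~\eqref{rep2}. The one genuinely different ingredient is your treatment of the refined claims in 2d: you obtain $\ord_f(w)=k$ off $V(g)$ and $\ord_f(w)<k$ on $V(g)$ by restricting to the slice through $w$, factoring $\Delta_y(z)=(z-w)(z-w^c)$ there (legitimate, since $\Delta_w=\Delta_y$ for $w\in\s_y$), and using $\ord_f(w)=\max_\ell\ord_{f_\ell}(w)$ for the holomorphic components; the paper instead invokes the combinatorial relation~\eqref{an} of Theorem~\ref{thm:relation}, applied after the change of base point of Remark~\ref{rearrange}, to compute $a_{-k}(w)=(w-w^c)^{-k}g(w)$ directly. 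Your route is more elementary in that it bypasses the machinery of Section~\ref{sec:relation} for this step, at the cost of working component-by-component; both are sound. One caution: as phrased, your slice restriction only yields $\|f\|_A\to+\infty$ along $\cc_I$, whereas 2d asserts the unrestricted limit $\lim_{\OO\ni x\to w}\|f(x)\|_A=+\infty$; this is recovered, exactly as in the paper, from the pointwise identity $f(x)=\Delta_y(x)^{-k}g(x)$ (valid because $\Delta_y^{-k}$ is slice preserving) together with inequality~\eqref{eq:c_A} of Remark~\ref{multiplicativeestimates}, noting that $\Delta_y(x)\in Q_A$. With that detail made explicit, and with the mutual exclusivity of the main assertions 1, 2, 3 spelled out so that each main statement implies its ``a'' condition, all cycles of implications close and your proof is complete.
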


\begin{proof} 
We divide the proof into several steps.
\begin{itemize}
\item We first consider the case when $\ord_f(\s_y)$ is finite. In this case, we let $k:=\frac{1}{2}\ord_f(\s_y)$ and define a slice regular function $G:\OO \lra A$ by setting $G(x):=\Delta_y^k(x) f(x)$. By considering the spherical Laurent expansions of $f$ at $y$ and at $\s_y$,
\[
f(x)=\sum_{n=-2k}^{+\infty}\stx_{y,n}(x)\cdot c_n=\sum_{n=-k}^{+\infty}\Delta_y^n(x)(xu_n+v_n),
\]
valid in $\Sto(y,0,R)$ for some $R>0$, we conclude that
\[
G(x)=\sum_{m=0}^{+\infty}\stx_{y,m}(x)\cdot c_{m-2k}=\sum_{m=0}^{+\infty}\Delta_y^m(x)(xu_{m-k}+v_{m-k})
\]
in $\Sto(y,0,R)$. By Theorem~\ref{thm:Abel-spherical}, the domains of convergence of the last two series include $\Sto(y,R)$. Therefore, $G$ extends to a slice regular function $g \in \mc{SR}(\widetilde{\OO})$. In particular, considering that $\s_y$ is compact, $G=g_{|_\OO}$ is bounded locally at $\s_y$. 

If $k=0$ then $g$ is an extension of $f$ to $\widetilde{\OO}$ and all points of $\s_y$ are removable singularities for $f$.

If $k>0$ then we can additionally observe that $g$ does not vanish identically on $\s_y$ (whence $g(y)\neq0$ or $g(y^c)\neq0$). Indeed,
\[g(x) = xu_{-k}+v_{-k}\]
for all $x \in \s_y$ and an equality $u_{-k}=0=v_{-k}$ would contradict $\ord_f(\s_y)=2k$. Since $\|f(x)\|_A \geq c_A|\Delta_y(x)|^{-k}\|g(x)\|_A$ for every $x \in \OO$, if $w \in \s_y \setminus V(g)$, then
\[
\lim_{\OO \ni x \to w}\|f(x)\|_A=+\infty.
\]
We now claim that $\ord_f(w)=k$ for all $w \in \s_y \setminus V(g)$ and $\ord_f(w)<k$ for all $w \in \s_y \cap V(g)$.
For every $w \in \s_y$, denote by $f(x)=\sum_{n \in \zz}(x-w)^{\punto n}\cdot a_n(w)$ the Laurent expansion of $f$ at $w$. Now, $c_\ell(w)=0$ for all $\ell<-2k$ and $c_{-2k}(w)=wu_{-k}+v_{-k}$ thanks to formulae~\eqref{eq:uk} and \eqref{eq:vk}. By applying formula \eqref{an}, we conclude that $a_n(w)=0$ for every $n<-k$ and $a_{-k}(w)=(w-w^c)^{-k}(wu_{-k}+v_{-k})$. As a consequence, we have that $a_{-k}(w)=(w-w^c)^{-k}g(w)$ and the claim is proven.

Altogether, taking into account Remark~\ref{sphericalvsisolatedorder}, we conclude that 
\[\mathit{1b} \Rightarrow \mathit{1a} \Rightarrow \mathit{1} \Rightarrow \mathit{1d} \Rightarrow \mathit{1c} \,,\]
\[\mathit{2b} \Rightarrow \mathit{2a} \Rightarrow \mathit{2} \Rightarrow \mathit{2d} \Rightarrow \mathit{2c}\,.\]

\item We now prove that
\[\mathit{3c} \Rightarrow \mathit{3d} \Rightarrow \mathit{3} \Rightarrow \mathit{3a} \Rightarrow \mathit{3b}\,.\]
Indeed, if {\it 3c} holds then the inequality 
\[\|\Delta_y(z)^kf(z)\|_A\geq c_A |\Delta_y(z)|^k\|f(z)\|_A=c_A |z-y|^k|z-y^c|^k\|f(z)\|_A,\]
valid for all $z \in U_J \setminus\{y,y^c\}$, allows to deduce {\it 3d}.

If {\it 3d} holds then there cannot exist $I \in \s$ such that the two points of $\s_y \cap \cc_I$ are both poles: otherwise, {\it 1d} or {\it 2d} would hold and contradict {\it 3d}. Hence, {\it 3} holds.

If {\it 3} holds then $\ord_f(\s_y)$ cannot be finite, otherwise {\it 1} or {\it 2} would hold and contradict {\it 3}. Thus, {\it 3a} holds.

Finally, if {\it 3a} holds then {\it 3b} follows by Remark~\ref{sphericalvsisolatedorder}.

 \item Let us close the three cycles of implications by proving that $\mathit{1c}\Rightarrow\mathit{1b}$, $\mathit{2c}\Rightarrow\mathit{2b}$ and $\mathit{3b}\Rightarrow\mathit{3c}$. To do so, we consider a splitting basis $\{1,J,J_1,JJ_1,\ldots,J_h,JJ_h\}$ of $A$ associated with $J$ and denote by $f_0,f_1,\ldots,f_h$ the holomorphic functions from $\OO_J$ to $\cc_J$ such that $f|_{\OO_J}=\sum_{\ell=0}^hf_\ell J_\ell$. 
By direct inspection in the proof of Theorem~\ref{Laurent}, $\ord_f(y)=\max_{\ell \in \{0,1,\ldots,h\}}\ord_{f_\ell}(y) \in \nn \cup \{+\infty\}$. 

Property {\it 1c} holds if, and only if, each holomorphic function $f_\ell$ is bounded in $U_J\setminus\{y,y^c\}$. By a well-known fact in classical complex analysis, this is equivalent to saying that $\ord_{f_\ell}(y)=0=\ord_{f_\ell}(y^c)$ for all $\ell \in \{0,1,\ldots,h\}$. This is, in turn, equivalent to property {\it 1b}.

If {\it 2c} holds then, for each $\ell \in \{0,1,\ldots,h\}$, the function $(z-y)^k(z-y^c)^kf_\ell(z)$ extends continuously, hence holomorphically, to $\widetilde{\OO}_J$. Thus, $y$ and $y^c$ are poles with $\ord_{f_\ell}(y),\ord_{f_\ell}(y^c) \leq k$. As a consequence, $\ord_{f}(y),\ord_{f}(y^c)$ are finite and they cannot both be zero, otherwise {\it 1b} would hold, which we proved equivalent to {\it 1c}. This would contradict {\it 2c}.

If {\it 3b} holds then there exists $\ell \in \{0,1,\ldots,h\}$ such that $\ord_{f_\ell}(y)=+\infty$ or $\ord_{f_\ell}(y^c)=+\infty$. Without loss of generality, the former equality holds. Thus, in all neighborhoods $U_J$ of $y$ in $\widetilde{\OO}_J$ and for all $k \in \nn$, $\sup_{z \in U_J \setminus \{y,y^c\}}|z-y|^k|f_\ell(z)|=+\infty$, whence {\it 3c} follows.

\item Since exactly one among properties {\it 1a}, {\it 2a}, {\it 3a} holds, the proof is complete.
\end{itemize}
\end{proof}

A completely analogous argument proves the next statement.

\begin{theorem}\label{sphericalclassificationreal}
Let $\widetilde{\OO}$ be a circular open subset of $Q_A$, let $y \in \widetilde{\OO} \cap \rr$ and set $\OO:=\widetilde{\OO}\setminus \{y\}$. Let $f: \OO \lra A$ be a slice regular function. Then one of the following properties holds:
\begin{enumerate}
 \item The point $y$ is a removable singularity for $f$.\\
 This is equivalent to each of the following conditions:
  \begin{itemize}
   \item[1a.] $\ord_f(y)=0$.
    \item[1b.] There exists a neighborhood $U$ of $y$ in $\widetilde{\OO}$ such that $\|f\|_A$ is bounded in $U \setminus \{y\}$.
  \end{itemize}
 \item The point $y$ is a non removable pole for $f$.\\
 This is equivalent to each of the following conditions:
 \begin{itemize}
   \item[2a.] $\ord_f(y)$ is finite and positive.
   \item[2b.]  There exists $k \in \nn \setminus \{0\}$ such that the function $\OO \to A$ defined by 
   \[x \mapsto (x-y)^kf(x)\]
   extends to a slice regular function $g \in \mc{SR}(\widetilde{\OO})$ with $g(y)\neq0$. In this case, $k=\ord_f(y)$ and
$\lim_{\OO \ni x \to y} \|f(x)\|_A = +\infty$.
  \end{itemize}
\item The point $y$ is an essential singularity for $f$.\\
 This is equivalent to each of the following conditions:
  \begin{itemize}
   \item[3a.] $\ord_f(y)=+\infty$
   \item[3b.] For all neighborhoods $U$ of $y$ in $\widetilde{\OO}$ and for all $k \in \nn$, 
   \[\sup_{x \in U \setminus \{y\}}\|(x-y)^kf(x)\|_A=+\infty.\]
   \end{itemize}
\end{enumerate}
\end{theorem}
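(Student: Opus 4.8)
The plan is to transcribe the proof of Theorem~\ref{sphericalclassification} into the degenerate situation in which the sphere $\s_y$ collapses to the single real point $y$, exploiting the simplifications this forces. Since $y^c=y$, there is only one Laurent expansion at $y$ to consider; by Theorem~\ref{thm:relation} its coefficients coincide with the spherical ones ($a_n=c_n$), so by Remark~\ref{sphericalvsisolatedorder} the whole classification is governed directly by $k:=\ord_f(y)$. Crucially, because $y\in\rr$ the binomial $x-y=\I(F)$ with $F(\alpha+i\beta)=(\alpha-y)+\ui\beta$ has $\rr$-valued components, hence it is slice preserving; therefore $(x-y)^{\punto k}$ equals the pointwise power $(x-y)^k$, lies in the nucleus, and commutes with every value of $f$. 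This removes the need for the $\Delta_y$-machinery and the Representation Formula that burdened the non-real proof.

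First I would treat the finite-order case. From the Laurent expansion $f(x)=\sum_{n\ge -k}(x-y)^{\punto n}\cdot a_n$, valid in a punctured ball $\Sigma(y,0,R)$, I set $G(x):=(x-y)^{\punto k}\cdot f(x)$ on $\OO$. Re-indexing yields the genuine power series $G(x)=\sum_{m\ge 0}(x-y)^{\punto m}\cdot a_{m-k}$, whose domain of convergence includes the full ball $\Sigma(y,R)$ by Theorem~\ref{thm:Abel-power}; hence $G$ extends to some $g\in\mc{SR}(\widetilde{\OO})$ with $g(y)=a_{-k}$. If $k=0$, then $g$ extends $f$ across $y$, giving removability and the chain $1a\Rightarrow 1\Rightarrow 1b$. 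If $k>0$, the definition of order gives $a_{-k}\neq 0$, so $g(y)\neq 0$; since $x-y$ is slice preserving we have $f(x)=(x-y)^{-k}g(x)$ pointwise off $\{y\}$, and estimate~\eqref{eq:c_A} (with $(x-y)^{-k}\in\cc_J\subseteq Q_A$) gives $\|f(x)\|_A\ge c_A\,|x-y|^{-k}\,\|g(x)\|_A$. As $g$ is continuous and nonzero at $y$, this forces $\lim_{x\to y}\|f(x)\|_A=+\infty$, establishing $2a\Rightarrow 2b\Rightarrow 2$ together with $k=\ord_f(y)$.

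Next I would dispatch the essential case $3a\Rightarrow 3\Rightarrow 3b$: a point of infinite order is by definition not a pole, hence essential; and were $\sup_{U\setminus\{y\}}\|(x-y)^k f\|_A$ finite for some $k$, then $(x-y)^{\punto k}\cdot f$ would be bounded near $y$ and thus extend, making $y$ a pole of order at most $k$, a contradiction. Finally, to close every cycle I would pass to a splitting basis $\{1,J,J_1,JJ_1,\ldots,J_h,JJ_h\}$ with $y\in\cc_J$, write $f|_{\OO_J}=\sum_{\ell=0}^h f_\ell J_\ell$ with each $f_\ell:\OO_J\to\cc_J$ holomorphic, and invoke the identity $\ord_f(y)=\max_\ell \ord_{f_\ell}(y)$ from the proof of Theorem~\ref{Laurent} to reduce everything to one-variable complex analysis on a punctured disc. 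There, boundedness of all the $f_\ell$ is equivalent to $\ord_{f_\ell}(y)=0$ for every $\ell$ by Riemann's removable singularity theorem (yielding $1b\Rightarrow 1a$), and $\ord_f(y)=+\infty$ forces some $f_\ell$ to have an essential singularity, for which $\sup|(z-y)^k f_\ell|=+\infty$ for all $k$ (yielding $3a\Rightarrow 3b$). Since exactly one of $\ord_f(y)=0$, $0<\ord_f(y)<+\infty$, $\ord_f(y)=+\infty$ holds, the three mutually exclusive alternatives and their characterizations follow. I expect the only genuinely delicate points to be the global extension of $g$ across the isolated point $y$ and the blow-up estimate at a pole; the remainder is a faithful but lighter copy of the non-real argument.
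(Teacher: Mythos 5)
Your proof is correct and is essentially the paper's own argument: the paper proves this theorem by declaring it ``completely analogous'' to the proof of Theorem~\ref{sphericalclassification}, and your transcription---replacing $\Delta_y^k$ by $(x-y)^k$ (legitimate since $x-y$ is slice preserving for $y\in\rr$, so $a_n=c_n$ and the spherical machinery collapses onto the ordinary Laurent expansion), extending $(x-y)^{\punto k}\cdot f$ via the re-indexed power series, deriving the blow-up from estimate~\eqref{eq:c_A}, and closing the cycles through a splitting basis and classical one-variable complex analysis---is exactly that analogous argument, finished off by the same trichotomy on $\ord_f(y)$ that the paper uses.
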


Theorem~\ref{sphericalclassificationreal} classifies real singularities of slice regular functions exactly as those of holomorphic functions of one complex variable. On the other hand, for singularities that do not lie on the real axis, Theorem~\ref{sphericalclassification} shows a different panorama. Already in the quaternionic case~\cite{singularities}, it turned out that on each sphere of poles, the order is constant with the possible exception of one point (which must have lesser order). The same happens in the octonionic case. Here is a different example over the $8$-dimensional Clifford algebra $\rr_3$:

\begin{example}
Let $A=\rr_3$ and set $\OO:=Q_A \setminus \s_A$. If $f: \OO \lra A$ is the slice regular function defined by $f(x):= (x+e_1)^{-\punto}\cdot(x-e_{23})$, then $f(x) = (x^2+1)^{-1} g(x)$ with
\[
g(x):=(x-e_1)\cdot(x-e_{23}).
\]
It is known that $V(g) \cap \s_A = \{e_1,e_{23}\}$ (see~\cite[Proposition 4.10]{gpsalgebra}). Therefore, $\ord_f(y)=1$ and 
\[\lim_{\OO \ni x \to y} \|f(x)\|_A = +\infty\]
for all $y \in \s_A \setminus \{e_1,e_{23}\}$. On the other hand, $\ord_f(e_1)=0=\ord_f(e_{23})$ and 
\[\lim_{\OO \ni x \to e_1} \|f(x)\|_A, \lim_{\OO \ni x \to e_{23}} \|f(x)\|_A\]
are not defined. Indeed, there exist finite
\[\lim_{\cc_{e_1} \ni x \to e_1} f(x),\quad \lim_{\cc_{e_{23}} \ni x \to e_{23}} f(x)\]
but $ \|f(x)\|_A$ is unbounded in every neighborhood $U$ of $e_1$ or $e_{23}$ in $Q_A$, because $e_1$ and $e_{23}$ are not isolated points in $\s_A$.
\end{example}

Over a general algebra $A$, it is not a trivial matter to study the order function on a sphere $\s_y$. The next section is devoted to this problem.


\section{Study of the order function}\label{sec:order}

We would now like to understand how much the order can vary on a sphere $\s_y$ of singularities for $f$. It is clear from Theorem~\ref{sphericalclassification} that this problem is related to the zero set $V(g)$ appearing in case {\it 2d}. Therefore, we begin with some new results concerning the zeros of slice functions.

We point out that the forthcoming theorem and the subsequent remark apply even without the Assumption (F) taken in the present paper. We will use both the Euclidean topology and the Zariski topology. We refer the reader to \cite{bochnakcosteroy} for more details on this and other notions from real algebraic geometry mentioned henceforth.

\begin{theorem}\label{thm:zeros}
Let $g:\OO \lra \hh$ be a slice function and let $y \in V(g) \setminus \rr$. Then there exists a real affine subspace $H$ of $A$ through $y$ such that
\[
V(g) \cap \s_y = H \cap \s_y.
\]
Now let us endow $\s_y$ with either the Euclidean or the Zariski relative topology. If $W$ denotes a connected subset of $\s_y$ containing $y$, then one of the following properties holds.
\begin{enumerate}
\item $g$ vanishes identically in $\s_y$.
\item $g$ vanishes identically in $W$ and it vanishes nowhere in $W^c$.
\item There exists a nonempty open subset $\widetilde W$ of $W$ such that $g$ vanishes nowhere in $\widetilde W \cup \widetilde W^c$.
\end{enumerate}
If $g'_s(y)=0$ then property {\it 1} holds. If, on the other hand, $g'_s(y)\neq0$, then either property {\it 2} or {\it 3} holds and the former is excluded under any of the following additional hypotheses:
\begin{itemize}
\item[$(a)$] $W$ is preserved by $^*$-involution; that is, $W^c=W$.
\item[$(b)$] $W$ intersects $W^c$.
\item[$(c)$] There exist distinct $y',y'' \in W$ such that $y'-y''$ is not a left zero divisor.
\item[$(d)$] There exist $y',y'' \in W$ such that $(y'-y'')g'_s(y)\neq0$.
\end{itemize}
\end{theorem}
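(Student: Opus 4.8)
The plan is to reduce the entire statement to one linear condition on $\s_y$ and then run a connectedness argument. Write $y=\alpha+\beta J$ with $\beta\neq0$, put $z=\alpha+i\beta$, and let $g=\I(G)$ with $G=F_1+\ui F_2$. The definition of slice function gives $g(\alpha+\beta I)=F_1(z)+I\,F_2(z)$ for every $I\in\s_A$, so on $\s_y$ the function $g$ is affine in $I$: $g(\alpha+\beta I)=b+Ic$ with $b:=\vs g(y)=F_1(z)$ and $c:=F_2(z)=\beta\,g'_s(y)$. Since $y\in V(g)$ forces $b=-Jc$, this collapses to the clean form $g(\alpha+\beta I)=(I-J)c$. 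Hence $V(g)\cap\s_y$ consists of the $\alpha+\beta I$, $I\in\s_A$, solving the real-linear equation $Ic=Jc$; setting $H:=\alpha+\beta\{u\in A\mid uc=Jc\}$, an affine subspace through $y$ (equal to $A$ when $c=0$), one reads off at once $V(g)\cap\s_y=H\cap\s_y$, which is the first assertion. Note that no associativity subtlety intervenes here, as everything is linear in the fixed element $c$.

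Next I would treat the spherical-derivative dichotomy. If $g'_s(y)=0$ then $c=0$, so $b=0$ and $g\equiv0$ on $\s_y$: property \textit{1} holds. Assume henceforth $c\neq0$. I introduce $Z:=\{I\in\s_A\mid Ic=Jc\}$ and its reflection $Z^-:=-Z=\{I\in\s_A\mid Ic=-Jc\}$. Under the identification $I\mapsto\alpha+\beta I$, a point lies in $V(g)$ iff its parameter is in $Z$, and its \emph{conjugate} lies in $V(g)$ iff its parameter is in $Z^-$ (using that conjugation $x\mapsto x^c$ on $\s_y$ corresponds to $I\mapsto-I$). The decisive elementary fact is $Z\cap Z^-=\emptyset$: an $I$ in both would give $Jc=-Jc$, i.e.\ $c=0$. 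Moreover $J\in Z\setminus Z^-$, and both sets are closed in $\s_A$ in the Euclidean topology and Zariski-closed as well, being intersections of $\s_A$ with affine (hence algebraic) subspaces, with $Z^-$ the image of $Z$ under the polynomial involution $I\mapsto-I$.

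Now the core dichotomy. Let $W'\subseteq\s_A$ be the connected set corresponding to $W$; it contains $J$. The sets $Z\cap W'$ and $Z^-\cap W'$ are disjoint and relatively closed in $W'$, in either topology. If they cover $W'$, they partition the connected set $W'$ into two clopen pieces; since $J\in Z\cap W'$, the other piece is empty, so $W'\subseteq Z$, and then $W'\cap Z^-\subseteq Z\cap Z^-=\emptyset$: this is exactly property \textit{2}. If they do not cover $W'$, then $\widetilde W':=W'\setminus(Z\cup Z^-)$ is a nonempty, relatively open subset of $W'$ meeting neither $Z$ nor $Z^-$, so the corresponding $\widetilde W\subseteq W$ is open and $g$ vanishes nowhere on $\widetilde W\cup\widetilde W^c$: this is property \textit{3}. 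The two alternatives are mutually exclusive, so exactly one of \textit{2}, \textit{3} holds.

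Finally I would exclude property \textit{2} under each of $(a)$--$(d)$, all of which contradict $W'\subseteq Z$. If $(a)$ $W^c=W$, then $W'=-W'\subseteq Z$ yields $W'\subseteq Z\cap Z^-=\emptyset$, impossible since $J\in W'$. If $(b)$ $W$ meets $W^c$, there is $I\in W'$ with $-I\in W'$, whence $I\in Z$ and $I\in Z^-$, again impossible. For $(c)$ and $(d)$, distinct (resp.\ arbitrary) $y',y''\in W$ correspond to $I',I''\in W'\subseteq Z$, so $(I'-I'')c=0$; since $y'-y''=\beta(I'-I'')$ and $g'_s(y)=c/\beta$, this says $(y'-y'')c=0$ and $(y'-y'')g'_s(y)=(I'-I'')c=0$. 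The first, with $c\neq0$ and $y'-y''\neq0$, makes $y'-y''$ a left zero divisor, contradicting $(c)$; the second directly contradicts $(d)$. In every case property \textit{2} fails, leaving property \textit{3}. The only genuinely delicate point is the bookkeeping of the conjugation $I\mapsto-I$ together with the observation $Z\cap Z^-=\emptyset$: this single fact simultaneously drives the connectedness split and the exclusion of property \textit{2}, after which the remaining verifications are purely formal and topology-agnostic, applying verbatim to both the Euclidean and the Zariski setting.
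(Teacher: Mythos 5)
Your proposal is correct and takes essentially the same approach as the paper: both proofs rest on the affineness of $g$ restricted to $\s_y$, a clopen/connectedness argument, and the same linear computation ($g(y')-g(y'')=(y'-y'')g'_s(y)$, in your notation $(I'-I'')c$) to rule out property \textit{2} under hypotheses $(a)$--$(d)$. The differences are purely presentational: you make the affine form $g(\alpha+\beta I)=(I-J)c$ explicit and run the clopen argument on $W$ itself via the disjoint closed sets $Z$ and $Z^-$, whereas the paper runs it on $W\cup W^c$ via the conjugation homeomorphism swapping $V(g)$ with its complement, and the paper chains the exclusions as $(a)\Rightarrow(b)\Rightarrow(c)\Rightarrow(d)$ rather than treating each case separately.
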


\begin{proof}
Since $g$ is affine when restricted to $\s_y$, the first statement immediately follows. Now let us consider the second statement. The intersection $W \cap V(g)$ is a closed subset of $W$ while the difference $W \setminus V(g)$ is an open subset of $W$.

If there exists $x \in W$ such that $g(x)\neq0\neq g(x^c)$, then property {\it 3} holds.

If, for some $x \in W$, $g(x)=0=g(x^c)$ then $g$ vanishes identically in $\s_y$ by the Representation Formula~\eqref{rep2} and property {\it 1} holds.

Let us now focus on the other possible case: for all $x \in W$ either $g(x)=0$ or $g(x^c)=0$.
Since $x \mapsto x^c$ is a homeomorphism from $\s_y$ to itself, $W^c$ is a connected subset of $\s_y$ containing $y^c$. If we set $U:= W \cup W^c$, then $U\cap V(g)$ is closed in $U$, $U\setminus V(g)$ is open in $U$ and they are swapped by $^*$-involution. Therefore, in $U:= W \cup W^c$, the subset $U \cap V(g)$ is open and closed and it intersects $W$ at $y$; the subset $U \setminus V(g)$ is open and closed and it intersects $W^c$ at $y^c$. This implies that $U\cap V(g)=W$, while $U\setminus V(g)=W^c$, which is property {\it 2}.

As for our final statements, $g'_s(y)=0$ implies $g(y)=0=g(y^c)$ by definition. Moreover, $(a)\Rightarrow(b)\Rightarrow(c)\Rightarrow(d)$ where $(d)$ implies that 
\[
g(y') - g(y'') =(\im(y')-\im(y''))g'_s(y) = (y'-y'')g'_s(y)\neq0\,,
\]
so that $g(y')$ and $g(y'')$ cannot both be $0$.
\end{proof}

A significant example of case {\it 2} can be constructed over the bicomplex numbers.

\begin{example}
We saw in Example~\ref{bicomplex} that $\s_{\B\cc}$ consists of four points, namely $e^+$, $e^-$, $-e^+$, $-e^-$. The polynomial
\[f(x):=x^2-x(e^++e^-)+e^+e^- = (x-e^+)\cdot(x-e^-)=(x-e^-)\cdot(x-e^+)\]
vanishes in $W:=\{e^+\}$ and in $Z:=\{e^-\}$ but not in $W^c=\{-e^+\}$ nor in $Z^c=\{-e^-\}$.
\end{example}

\begin{remark}
It is worth noting that the statement of Theorem \ref{thm:zeros} with respect to the Zariski topology is stronger than the same statement with respect to the Euclidean topology. Indeed, the Zariski topology is coarser than the Euclidean one and hence, if a subset of $\s_y$ is connected with respect to the Euclidean topology, then it is also Zariski-connected. In general, the converse implication is false. 

A real algebraic set $V$ is connected with respect to the Zariski topology if either it is irreducible or it is reducible and, for every pair of distinct irreducible components $M$, $N$ of $V$, there exists a finite sequence $(V_1,\ldots,V_k)$ of irreducible components of $V$, with $k \geq 2$, such that $M=V_1$, $N=V_k$ and $V_i \cap V_{i+1} \neq \emptyset$ for every $i \in \{1,\ldots,k-1\}$.
\end{remark}

In the Zariski topology, we can draw from Theorem~\ref{thm:zeros} the following consequence.

\begin{corollary}\label{cor:zeros}
Let $g:\OO \lra \hh$ be a slice function and let $y\in V(g)$ be such that $g$ does not vanish identically in $\s_y$. If $Y$ is an irreducible component of $\s_y$ including $y$, then either $g$ vanishes identically in $Y$ and nowhere in $Y^c$ or $g$ is nonzero at a generic point of $Y\cup Y^c$; that is, $\dim_\rr((Y \cup Y^c) \cap V(g))<\dim_\rr(Y \cup Y^c)=\dim_\rr(Y)$.
\end{corollary}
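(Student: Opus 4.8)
The plan is to deduce the corollary directly from Theorem~\ref{thm:zeros}, invoked in its Zariski form, by taking the connected set $W$ to be the irreducible component $Y$ itself. First I would dispose of the trivial case: the hypotheses force $y \notin \rr$. Indeed, if $y$ were real then $\s_y=\{y\}$, and the assumption that $g$ does not vanish identically on $\s_y$ would read $g(y)\neq 0$, contradicting $y\in V(g)$. Thus $y\in V(g)\setminus\rr$ and Theorem~\ref{thm:zeros} applies. Since $Y$ is irreducible it is Zariski-connected, and it contains $y$ by hypothesis, so $W:=Y$ is an admissible choice in the theorem with respect to the Zariski relative topology on $\s_y$.

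With this choice, the assumption that $g$ does not vanish identically on $\s_y$ rules out property \textit{1} of Theorem~\ref{thm:zeros}, leaving exactly property \textit{2} or property \textit{3}. Property \textit{2} states that $g$ vanishes identically in $W=Y$ and nowhere in $W^c=Y^c$, which is precisely the first alternative in the statement of the corollary. It remains only to show that property \textit{3} yields the generic-nonvanishing alternative.

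For the dimension count I would use three facts about the $^*$-involution $x\mapsto x^c$. It is a real-linear bijection of the affine space $A$, hence an $\rr$-biregular automorphism; consequently it preserves the Zariski topology, carries irreducible components of $\s_y$ to irreducible components, and preserves dimension. In particular $Y^c$ is irreducible with $\dim_\rr Y^c=\dim_\rr Y$, so $\dim_\rr(Y\cup Y^c)=\dim_\rr Y$, and the set $\widetilde W^c$ is Zariski-open in $Y^c$. Now property \textit{3} furnishes a nonempty Zariski-open $\widetilde W\subseteq Y$ on which (together with $\widetilde W^c$) $g$ vanishes nowhere. Since $Y$ and $Y^c$ are irreducible, the proper closed complements $Y\setminus\widetilde W$ and $Y^c\setminus\widetilde W^c$ have dimension strictly below $\dim_\rr Y$ (a nonempty open subset of an irreducible variety is dense, and a proper closed subset has strictly smaller dimension; cf.~\cite{bochnakcosteroy}). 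Because $V(g)$ avoids $\widetilde W\cup\widetilde W^c$, we obtain the inclusion
\[
(Y\cup Y^c)\cap V(g)\subseteq (Y\setminus\widetilde W)\cup(Y^c\setminus\widetilde W^c),
\]
whence $\dim_\rr\big((Y\cup Y^c)\cap V(g)\big)<\dim_\rr Y=\dim_\rr(Y\cup Y^c)$, which is exactly the meaning of ``$g$ is nonzero at a generic point of $Y\cup Y^c$''.

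The only genuinely delicate point is the interplay between Zariski-openness and dimension across the involution, and I expect this to be where care is needed: one must read the ``open'' appearing in property \textit{3} in the Zariski sense (which is the version of Theorem~\ref{thm:zeros} I am invoking) and use that conjugation is a biregular automorphism so that irreducibility, the Zariski topology, and dimension are all transported between $Y$ and $Y^c$. Everything else reduces to the elementary fact that a proper Zariski-closed subset of an irreducible real algebraic set has strictly smaller dimension.
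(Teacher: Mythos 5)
Your proof is correct and follows exactly the route the paper intends: the paper states this corollary as an immediate consequence of Theorem~\ref{thm:zeros} in the Zariski topology, and your argument (taking $W:=Y$, which is Zariski-connected by irreducibility, excluding property \textit{1} by hypothesis, and converting property \textit{3} into the dimension bound via the fact that a proper Zariski-closed subset of an irreducible real algebraic set has strictly smaller dimension, with conjugation acting as a biregular automorphism) is precisely the implicit derivation. The points you flag as delicate—reading ``open'' in the Zariski sense and transporting irreducibility and dimension through $x\mapsto x^c$—are handled correctly.
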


\begin{remark}
In the division algebras $\hh$ and $\oo$, the $\s_y$ are Euclidean spheres. As a consequence, the unique irreducible component of each $\s_y$ is the whole sphere $\s_y$ itself, which is invariant under $^*$-involution. Thus, if $g \in \mc{S}(\OO)$, if $y\in V(g)$ and if $g$ does not vanish identically in $\s_y$ then $\dim_\rr(\s_y \cap V(g))<\dim_\rr(\s_y)$. 
\end{remark}

This is consistent with the detailed study of the zeros conducted specifically for the quaternions, see~\cite[Theorem 3.1]{librospringer}, and for the octonions, see~\cite[Theorem 1]{ghiloni}.
Moreover, Corollary~\ref{cor:zeros} yields a new result concerning split quaternions.

\begin{remark}
In the algebra of split quaternions $\s\hh$, the $\s_y$ are two-sheet hyperboloids of real dimension $2$, see~\cite[Example 1.13]{gpsalgebra}. By~\cite[Theorem 4.5.1]{bochnakcosteroy}, the unique irreducible component of each $\s_y$ is the whole $\s_y$ itself, which is invariant under $^*$-involution. As a consequence,  if $g \in \mc{S}(\OO)$, if $y\in V(g)$ and if $g$ does not vanish identically in $\s_y$ then $\dim_\rr(\s_y \cap V(g))\leq 1$. 
\end{remark}

Before proceeding to study the order function on a sphere $\s_y$, let us prove a useful lemma. 

\begin{lemma} \label{lem:polesonasphere}
Let $f:\OO \lra A$ be a slice regular function, let $y \in Q_A$ be a singularity for $f$ and, for every $n \in \zz$, let $a_n:\s_y \lra A$ be the function which associates to each $w \in \s_y$ the $n^{\mr{th}}$-coefficient $a_n(w)$ of the Laurent expansion of $f$ at $w$. Denote by $P$ be the set of all poles of $f$ belonging to $\s_y$. Then there exist $m,\ell \in \nn$ (depending on $f$ and $\s_y$) with $1\leq m \leq \ell$ such that
\[P=\bigcap_{n=-\ell}^{-m}\,\{w \in \s_y \, | \, a_n(w)=0\}.\] 
In particular, there exists a real affine subspace $H$ of $A$ such that $P=\s_y \cap H$.
\end{lemma}

\begin{proof}
By Corollary \ref{coefficientsareslicefunctions}, there exist $b_n,c_n \in A$ such that $a_n(w)=wb_n+c_n$. If $H_n$ denotes the affine subspace $\{x \in A \, | \, xb_n+c_n=0\}$ of $A$, then we have that $P=\s_y \cap H$, where 
\[H:=\bigcup_{k \in \nn}\bigcap_{n<-k}H_n.\]
Since $\{\bigcap_{n<-k}H_n\}_{k \in \nn}$ is a non-decreasing sequence of affine subspaces of $A$, there exists $m \geq 1$ such that 
\[H=\bigcap_{n \leq -m}H_n=\bigcap_{k \geq m}\bigcap_{n=-k}^{-m}H_n.\]
Now, the sequence $\{\bigcap_{n=-k}^{-m}H_n\}_{k \geq m}$ of affine subspaces of $A$ is non-increasing. It follows that $H=\bigcap_{n=-\ell}^{-m}H_n$ for some $\ell \geq m$.  
\end{proof}

We are now ready to study the order function.

\begin{theorem}
Let $f: \OO \lra A$ be a slice regular function and let $y \in Q_A \setminus \rr$ be a singularity for $f$. If $2\,\ord_f(y)<\ord_f(\s_y)$ then there exists a real affine subspace $H$ of $A$ through $y$ such that
\[
\{x \in \s_y\ |\ 2\,\ord_f(x)<\ord_f(\s_y)\} = \s_y \cap H.
\]
If $W$ denotes a Zariski (or Euclidean) connected subset of $\s_y$ containing $y$, then one of the following properties holds.
\begin{enumerate}
\item For all $x \in W$, $2\,\ord_f(x)<\ord_f(\s_y)=2\,\ord_f(x^c)$.
\item There exists a nonempty Zariski (or Euclidean) open subset $\widetilde W$ of $W$ such that $2\,\ord_f(x)=\ord_f(\s_y)=2\,\ord_f(x^c)$ for all $\widetilde W$.
\end{enumerate}
Case {\it 1} is excluded under any of the following additional assumptions: $(a)$ $W$ is preserved by $^*$-involution; $(b)$ $W$ intersects $W^c$; $(c)$ there exist distinct $y',y'' \in W$ such that $y'-y''$ is not a left zero divisor; or $(d)$ there exists $y' \in W$ such that $2\,\ord_f(y')=\ord_f(\s_y)$.

Finally, if $Y$ is an irreducible component of $\s_y$ containing $y$, then one of the following properties holds.
\begin{enumerate}
\item[1'.] For all $x \in Y$, $2\,\ord_f(x)<\ord_f(\s_y)=2\,\ord_f(x^c)$.
\item[2'.] There exists a real algebraic subset $Y'$ of $Y$ such that $\dim_\rr(Y')<\dim_\rr(Y)$ and $2\,\ord_f(x)=2\,\ord_f(x^c)=\ord_f(\s_y)$ for every $x \in Y \setminus Y'$.
\end{enumerate}
\end{theorem}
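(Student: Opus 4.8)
The plan is to reduce the statement about the order function to the already-proved Theorem~\ref{thm:zeros} concerning zero sets of slice functions, applied to the slice regular extension $g$ furnished by case \textit{2d} of Theorem~\ref{sphericalclassification}. Concretely, since $y$ is a singularity with $\ord_f(\s_y)$ finite (which is forced by the hypothesis $2\,\ord_f(y)<\ord_f(\s_y)$, so $\ord_f(\s_y)$ cannot be $+\infty$), I set $k:=\frac12\ord_f(\s_y)$ and invoke case \textit{2d} to obtain a slice regular function $g\in\mc{SR}(\widetilde\OO)$ extending $x\mapsto\Delta_y(x)^kf(x)$, which does not vanish identically on $\s_y$. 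The key dictionary, established in the proof of Theorem~\ref{sphericalclassification}, is that for every $w\in\s_y$ one has $a_{-k}(w)=(w-w^c)^{-k}g(w)$; hence $2\,\ord_f(w)<\ord_f(\s_y)=2k$ holds \emph{precisely} when $g(w)=0$. In other words, the exceptional set $\{x\in\s_y\mid 2\,\ord_f(x)<\ord_f(\s_y)\}$ coincides with $V(g)\cap\s_y$.

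Once this identification is in place, every assertion becomes a direct translation of Theorem~\ref{thm:zeros} (and Corollary~\ref{cor:zeros}) applied to $g$. The affine-subspace description $\{x\in\s_y\mid 2\,\ord_f(x)<\ord_f(\s_y)\}=\s_y\cap H$ is exactly the first statement of Theorem~\ref{thm:zeros}, namely $V(g)\cap\s_y=H\cap\s_y$. For the trichotomy over a connected subset $W$ containing $y$, I would note that the hypothesis $2\,\ord_f(y)<\ord_f(\s_y)$ means $g(y)=0$, i.e. $y\in V(g)$; this excludes property \textit{3} of Theorem~\ref{thm:zeros} at $y$, and moreover it forces $g'_s(y)\neq0$, because $g'_s(y)=0$ would give $g(y)=0=g(y^c)$ and hence (by the Representation Formula) $g\equiv0$ on $\s_y$, contradicting the non-vanishing from case \textit{2d}. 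Therefore exactly property \textit{2} or \textit{3} of Theorem~\ref{thm:zeros} holds for $g$, and these translate respectively into properties \textit{1} and \textit{2} of the present statement: case \textit{2} there ($g$ vanishes on $W$, nowhere on $W^c$) says $2\,\ord_f(x)<\ord_f(\s_y)=2\,\ord_f(x^c)$ for all $x\in W$, while case \textit{3} there gives an open $\widetilde W$ on which $g$ vanishes nowhere on $\widetilde W\cup\widetilde W^c$, i.e. $2\,\ord_f(x)=\ord_f(\s_y)=2\,\ord_f(x^c)$. The exclusion conditions $(a)$--$(d)$ are inherited verbatim from Theorem~\ref{thm:zeros}, with the only point requiring care being condition $(d)$: here it reads $2\,\ord_f(y')=\ord_f(\s_y)$, which says $g(y')\neq0$, and I must check this implies the Theorem~\ref{thm:zeros} condition $(d)$, namely $(y'-y'')g'_s(y)\neq0$ for suitable $y''$ — taking $y''=y$ (where $g(y)=0$) gives $g(y')-g(y)=(y'-y)g'_s(y)=g(y')\neq0$, as needed.

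The irreducible-component statement (cases \textit{1'} and \textit{2'}) follows identically from Corollary~\ref{cor:zeros} applied to $g$: either $g$ vanishes identically on $Y$ and nowhere on $Y^c$ (yielding \textit{1'}), or $g$ is nonzero at a generic point of $Y\cup Y^c$, so that $Y':=(Y\cup Y^c)\cap V(g)\cap Y$ is a real algebraic subset of $Y$ with $\dim_\rr(Y')<\dim_\rr(Y)$, off which $2\,\ord_f(x)=2\,\ord_f(x^c)=\ord_f(\s_y)$.

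I expect the main obstacle to be purely bookkeeping rather than conceptual: one must verify that the coefficient identity $a_{-k}(w)=(w-w^c)^{-k}g(w)$ is valid uniformly for \emph{all} $w\in\s_y$ and genuinely characterizes the strict inequality in the order (i.e.\ that $a_n(w)=0$ for $n<-k$ always, so that the order drop is detected exactly at level $-k$), and that the factor $(w-w^c)^{-k}$ is invertible for $w\in\s_y\setminus\rr$ so it does not affect the zero set. Both facts are already contained in the proof of Theorem~\ref{sphericalclassification}, so the work reduces to citing them precisely and confirming that $V(g)\cap\s_y$ — not some larger degeneracy locus — is the relevant set. The matching of the exclusion condition $(d)$ described above is the one spot where a short explicit argument, rather than a bare citation, is required.
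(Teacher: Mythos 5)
Your reduction of the finite case is exactly the paper's own argument: when $\ord_f(\s_y)$ is finite, the paper likewise deduces everything from case \emph{2d} of Theorem~\ref{sphericalclassification} (which supplies the extension $g$ of $\Delta_y^k f$ together with the dictionary $a_{-k}(w)=(w-w^c)^{-k}g(w)$, $a_n(w)=0$ for $n<-k$), and then applies Theorem~\ref{thm:zeros} and Corollary~\ref{cor:zeros} to $g$, just as you do; your handling of the exclusion conditions and of $g'_s(y)\neq 0$ is correct in that case.

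The genuine gap is your opening claim that the hypothesis $2\,\ord_f(y)<\ord_f(\s_y)$ forces $\ord_f(\s_y)$ to be finite. It does not. By Remark~\ref{sphericalvsisolatedorder}, $\ord_f(\s_y)=2\max\{\ord_f(y),\ord_f(y^c)\}$, so the hypothesis is perfectly compatible with $\ord_f(y)$ finite and $\ord_f(y^c)=+\infty$, in which case $\ord_f(\s_y)=+\infty$ and the strict inequality $2\,\ord_f(y)<\ord_f(\s_y)$ holds. This situation actually occurs: over $\hh$, the slice function induced by the stem function built from $h(z)=e^{1/(z+i)}$ restricts to $h$ on $\cc_i$, hence has $\ord_f(i)=0$ but an essential singularity at $-i$. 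In this case there is no $k=\tfrac12\ord_f(\s_y)$, no extension $g$, and no coefficient identity, so your entire dictionary $\{2\,\ord_f(x)<\ord_f(\s_y)\}=V(g)\cap\s_y$ is unavailable; the exceptional set is instead the set $P$ of poles of $f$ on $\s_y$. The paper treats this case separately, using Lemma~\ref{lem:polesonasphere} (which shows $P$ is cut out on $\s_y$ by finitely many of the coefficient functions $a_n$, each affine on $\s_y$ by Corollary~\ref{coefficientsareslicefunctions}, so that $P=\s_y\cap H$), combined again with Theorem~\ref{thm:zeros} and Corollary~\ref{cor:zeros}; your proof silently omits this whole branch, and the omission stems from an incorrect deduction rather than a stylistic shortcut, so it must be repaired. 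A second, much smaller, point: in your treatment of the irreducible component, the set $Y':=Y\cap V(g)$ only guarantees $g(x)\neq 0$ off $Y'$, not $g(x^c)\neq 0$; you should take $Y':=Y\cap\bigl(V(g)\cup V(g)^c\bigr)$, where $V(g)^c$ denotes the image of $V(g)$ under the $^*$-involution, which is still algebraic of dimension $<\dim_\rr(Y)$ by Corollary~\ref{cor:zeros}.
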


\begin{proof}
If $\ord_f(\s_y)$ is finite, then our statements follows directly from case {\it 2d} in Theorem~\ref{sphericalclassification}, along with Theorem~\ref{thm:zeros} and Corollary~\ref{cor:zeros}. If, on the other hand, $\ord_f(\s_y)=+\infty$ then they follow from Theorem~\ref{thm:zeros}, Corollary~\ref{cor:zeros} and Lemma~\ref{lem:polesonasphere}.
\end{proof}


\section{The algebra of slice semiregular functions}\label{sec:algebra}

We conclude this paper with a study of the analogs of meromorphic functions.

\begin{definition}\label{def:semiregular}
A function $f$ is \emph{(slice) semiregular} in a (nonempty) circular open subset $\OO$ of $Q_A$ if there exists a circular open subset $\OO'$ of $\OO$ such that $f \in \mc{SR}(\OO')$ and such that every point of $\OO \setminus \OO'$ is a pole (or a removable singularity) for $f$.
\end{definition}

As an application of Remark~\ref{rmk:rearrange}, as well as Theorems~\ref{sphericalclassification} and~\ref{sphericalclassificationreal}, we make a new observation.

\begin{remark}\label{rmk:semiregulartoquotient}
Let $f$ be a semiregular function on a circular open subset $\OO$ of $Q_A$. For each $y \in \OO$, there exists $r>0$ such that $f$ is slice regular in $\Sto(y,0,r)$. Either $f \equiv 0$ in $\Sto(y,0,r)$ or one of the following properties holds.
\begin{itemize}
\item  If $y \not \in \rr$ then there exists $m \in \zz$ such that, for $x \in \Sto(y,0,r)$,
\[f(x) = \Delta_y^m(x) g(x)\]
for some $g \in \mc{SR}(\Sto(y,r))$ that does not vanish identically in $\s_y$.  If $m<0$ then the spherical order $\ord_f(\s_y)$ equals $-2m$. If, on the other hand, $m\geq0$ then $\ord_f(\s_y)=0$ and $2m$ will be called the \emph{spherical multiplicity of $f$ at $\s_y$.} 
\item If $y \in \rr$ then there exists $n \in \zz$ such that, for $x \in \Sto(y,0,r)= Q_A \cap B(y,r)\setminus\{y\}$,
\[f(x) = (x-y)^n h(x)\]
for some $h \in \mc{SR}(\Sto(y,r))$ with $h(y) \neq 0$. Moreover, $f(x) = \Delta_y^m(x) g(x)$ with $m:=\left\lfloor \frac n2\right\rfloor$ and $g \in \mc{SR}(\Sto(y,r))$. If $n<0$ then $\ord_f(y)=-n$ and $ \ord_f(\s_y)=-2m$. If, on the other hand, $n\geq0$ then $\ord_f(y)=0=\ord_f(\s_y)$ and $n$ will be called the \emph{classical multiplicity of $f$ at $y$}. 
\end{itemize}
Conventionally, if $y \not \in \rr$ then the spherical multiplicity of $f$ at $\s_y$ is set to $0$ if $m<0$ and to $+\infty$ if $f\equiv0$ in $\Sto(y,0,r)$. Similarly, if $y \in \rr$ then the classical multiplicity of $f$ at $y$ is set to $0$ if $n<0$ and to $+\infty$ if $f\equiv0$ in $\Sto(y,0,r) = Q_A \cap B(y,r)\setminus\{y\}$.
\end{remark}

In particular, a semiregular function can be locally expressed as a quotient of slice regular functions. This enables us to study the algebraic properties of semiregular functions.

\begin{theorem} \label{thm:semiregular}
Let $\OO$ be a a circular open subset of $Q_A$. The set of semiregular functions on $\OO$ is a real alternative algebra with respect to $+,\cdot$. It becomes a $^*$-algebra when endowed with the involution $f \mapsto f^c$. It is nonsingular if, and only if, $A$ is nonsingular and $\OO$ is a union of slice domains.
\end{theorem}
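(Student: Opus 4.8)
The plan is to reduce every assertion to the corresponding property of the $^*$-algebra $\mc{SR}$ of slice regular functions (Proposition~\ref{prop:algebra}) by exploiting the local description of semiregular functions provided by Remark~\ref{rmk:semiregulartoquotient}. First I would record the global geometry of the singular set: by Definition~\ref{def:semiregular} each pole $y$ of a semiregular $f$ possesses a punctured Cassini neighborhood $\Sto(y,0,r)$ on which $f$ is slice regular, so the poles meet each slice $\cc_J$ in a discrete set and the whole pole set is the circularization of a closed discrete subset of $D$ (as in the last assertion of Proposition~\ref{reciprocal}). Consequently each semiregular function restricts to a slice regular function on a \emph{dense} circular open subset $\OO'\subseteq\OO$, any two semiregular functions share such a dense common regularity domain, and—by the identity principle applied to the holomorphic slice components of Lemma~\ref{splitting}—a slice regular function vanishing on a dense circular open subset of its domain vanishes identically. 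These facts let me define $f+g$, $\lambda f$, $f\cdot g$ and $f^c$ on the common regularity domain, where they coincide with the operations of $\mc{SR}$, and extend them uniquely to $\OO$.

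The crucial point is that these extensions are again semiregular, i.e. introduce no essential singularities. Here I would use Remark~\ref{rmk:semiregulartoquotient}: near any $y\in\OO$ every semiregular function can be written as $\Delta_y^{m}\cdot g$ with $g\in\mc{SR}(\Sto(y,r))$ (or, for $y\in\rr$, as $(x-y)^{n}h$ with $h(y)\neq0$), the factors $\Delta_y$ and $x-y$ (the latter when $y\in\rr$) being slice preserving, hence central in $\mc{SR}$ and fixed by $^c$. Writing $f=\Delta_y^{m_1}\cdot g_1$ and $g=\Delta_y^{m_2}\cdot g_2$ near a common sphere $\s_y$, with $m_1\le m_2$, I obtain $f+g=\Delta_y^{m_1}\cdot\bigl(g_1+\Delta_y^{m_2-m_1}\cdot g_2\bigr)$, $f\cdot g=\Delta_y^{m_1+m_2}\cdot(g_1\cdot g_2)$ and $f^c=\Delta_y^{m_1}\cdot g_1^c$, with the bracketed functions slice regular on $\Sto(y,r)$; the analogous computation with $x-y$ handles $y\in\rr$. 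In each case the result has the form (central factor)$\,\cdot\,$(slice regular), so $y$ is at worst a pole—never an essential singularity—and the operations preserve semiregularity. The one subtlety to flag is that $g_1\cdot g_2$ may vanish identically near $\s_y$ when $A$ has zero divisors; but then $f\cdot g\equiv0$ there, so $y$ is a removable singularity, which is still admissible, and otherwise a further application of Remark~\ref{rmk:semiregulartoquotient} to $g_1\cdot g_2$ absorbs its finite spherical vanishing order into the central factor.

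With closure established, the algebraic axioms are immediate: distributivity, bilinearity, the alternating law for the associator, and the involution identities $(f^c)^c=f$ and $(f\cdot g)^c=g^c\cdot f^c$ all hold in $\mc{SR}$ (Proposition~\ref{prop:algebra}), hence hold for semiregular functions on their common dense regularity domain; since the two sides of each identity are semiregular and agree on a dense circular open set, they agree on $\OO$. This proves that the semiregular functions on $\OO$ form a real alternative $^*$-algebra.

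Finally, for nonsingularity I would argue in both directions from Proposition~\ref{SRnonsingular}, recalling that here $n(f)=f\cdot f^c=N(f)$. If $A$ is singular or $\OO$ is not a union of slice domains, Proposition~\ref{SRnonsingular} furnishes a nonzero $f\in\mc{SR}(\OO)$, hence a nonzero semiregular function, with $N(f)\equiv0$, so the algebra is singular. Conversely, suppose $A$ is nonsingular and $\OO$ is a union of slice domains, and let $f$ be semiregular with regularity domain $\OO'$ and $N(f)\equiv0$. The key observation is that $\OO'$ arises from a union of slice domains by deleting the circularization of a closed discrete set, which keeps every slice connected (removing discrete points from a planar domain preserves connectedness); thus $\OO'$ is again a union of slice domains, and by Proposition~\ref{SRnonsingular} the algebra $\mc{SR}(\OO')$ is nonsingular. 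Since $N(f)$ coincides with the slice regular normal function of $f|_{\OO'}$ and vanishes identically there, we get $f|_{\OO'}\equiv0$, i.e. $f=0$. I expect the main obstacle to lie in this last paragraph—specifically, in verifying that the regularity domain $\OO'$ remains a union of slice domains after removing the poles—together with the careful treatment of the slice product at poles in the presence of zero divisors.
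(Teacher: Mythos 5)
Your proof is correct and follows essentially the same route as the paper's: both rest on the local factorization $f=\Delta_y^{\,m}\cdot g$ with $g\in\mc{SR}(\Sto(y,r))$ from Remark~\ref{rmk:semiregulartoquotient}, used to show that $+$, $\cdot$ and $f\mapsto f^c$ yield functions of finite spherical order at each deleted sphere (hence poles or removable singularities, by Theorem~\ref{sphericalclassification}), with the nonsingularity criterion reduced to Proposition~\ref{SRnonsingular}. Your last paragraph, verifying that the regularity domain stays a union of slice domains after deleting the circular, slicewise-discrete pole set, fills in a detail that the paper leaves implicit in its one-line appeal to Proposition~\ref{SRnonsingular}.
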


\begin{proof}
Let $f_1,f_2$ be semiregular in $\OO$. Let $\OO'$ be obtained from $\OO$ by erasing the nonremovable singularities of $f_1$ and let $\OO''$ be obtained from $\OO'$ by erasing the nonremovable singularities of $f_2$. By~Proposition~\ref{prop:algebra}, $f_1+f_2, f_1\cdot f_2$ and $f_1^c$ are elements of $\mc{SR}(\OO'')$ and we are left with proving that every $y \in \OO \setminus\OO''$ is a pole for all of them. If $k_1:=\frac{1}{2}\,\ord_{f_1}(\s_y)$ and $k_2:=\frac{1}{2}\,\ord_{f_2}(\s_y)$ then, for some $r>0$ and for $x \in \Sto(y,0,r)$,
\[f_1(x) = \Delta_y^{-k_1}(x)g_1(x),\quad f_2(x) = \Delta_y^{-k_2}(x) g_2(x)\]
with $g_1,g_2 \in \mc{SR}(\Sto(y,r))$. Thus,
\begin{align*}
&f_1(x)+f_2(x) = \Delta_y^{-\max\{k_1,k_2\}}(x)\,h(x)\\
&(f_1\cdot f_2)(x) = \Delta_y^{-k_1-k_2}(x)\,p(x)\\
&f_1^c(x) = \Delta_y^{-k_1}(x)\,g_1^c(x)
\end{align*}
with $h,p,g_1^c \in \mc{SR}(\Sto(y,r))$. By Theorem \ref{sphericalclassification}, $h,p,g_1^c$ have spherical order $0$ at $\s_y$. By Remark~\ref{rmk:semiregulartoquotient}, 
\begin{align*}
&h(x) = \Delta_y^{\ell}(x)\,q(x)\\
&p(x) = \Delta_y^{m}(x)\,r(x)\\
&g_1^c(x) = \Delta_y^{n}(x)\,s(x)
\end{align*}
with $\ell,m,n \geq 0$ and with $q,r,s \in \mc{SR}(\Sto(y,r))$ that do not vanish identically in $\s_y$. By a further application of Remark~\ref{rmk:semiregulartoquotient},
we conclude that the spherical orders of $f+g,f\cdot g$ and $f^c$ at $\s_y$ are finite. The thesis follows by Theorem~\ref{sphericalclassification}.
Our final statement follows from Proposition~\ref{SRnonsingular}.
\end{proof}

Our final remarks concern division among semiregular functions. We begin with a definition, which extends Definition~\ref{def:tame}.

\begin{definition}\label{def:tamesemiregular}
In the hypotheses of Definition~\ref{def:semiregular}, $f$ is said to be \emph{tame} in $\OO$ if it is tame in $\OO'$.
\end{definition}

\begin{remark}
If $f$ is semiregular and tame in a circular open subset $\OO$ of $Q_A$, then:
\begin{itemize}
\item for all $y \in \OO \setminus \rr$ with $\ord_{N(f)}(\s_y)=2k$, there exists $r>0$ such that in $\Sto(y,r)$ the function $g(x):=\Delta_y^k(x)\,N(f)(x)$ is slice preserving and it coincides with the function $\Delta_y^k(x)\,N(f^c)(x)$;
\item for all $y \in \OO \cap \rr$ with $\ord_{N(f)}(y)=n$, there exists $r>0$ such that in $\Sto(y,r)=Q_A \cap B(y,r)$ the function $h(x):=(x-y)^n\,N(f)(x)$ is slice preserving and it coincides with $(x-y)^n\,N(f^c)(x)$.
\end{itemize}
Using the notations of Definition~\ref{def:semiregular}, these facts are guaranteed by Definition~\ref{def:tamesemiregular} for all $y\in\OO'$ (where $\ord_{N(f)}(\s_y)=0=\ord_{N(f)}(y)$). For $y \in \OO \setminus\OO'$, they follow by continuity.
\end{remark}

Within the algebra of semiregular functions, division by tame elements is possible thanks to the next result.

\begin{theorem}\label{thm:quotienttosemiregular}
If $f$ is semiregular and tame in a circular open subset $\OO$ of $Q_A$ and if $N(f)\not\equiv 0$ then $f^{-\punto}$ is semiregular and tame in $\OO$.
\end{theorem}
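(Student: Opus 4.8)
The plan is to show that $f^{-\punto}$ is semiregular and tame by working locally near each singularity and invoking the structure theory already developed. Since $f$ is semiregular and tame in $\OO$ with $N(f)\not\equiv 0$, Definition~\ref{def:semiregular} provides a circular open $\OO'$ on which $f$ is slice regular and tame, with every point of $\OO\setminus\OO'$ a pole. On $\OO'$ we may further remove $V(N(f))$; by Proposition~\ref{reciprocal}, the inverse $f^{-\punto}=N(f)^{-\punto}\cdot f^c$ exists and is slice regular and tame wherever $N(f)$ does not vanish. So the first step is to establish that $f^{-\punto}$ is slice regular and tame on a suitable circular open subset $\OO''$ of $\OO'$, namely $\OO'\setminus V(N(f))$, using Proposition~\ref{reciprocal} together with the final clause of that proposition guaranteeing slice regularity of the inverse.

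The heart of the argument is the local analysis at each point $y\in\OO\setminus\OO''$, of which there are two kinds: the poles of $f$ (points of $\OO\setminus\OO'$) and the points of $V(N(f))\cap\OO'$, i.e.\ where $f$ has a spherical zero. For each such $y$, I would apply Remark~\ref{rmk:semiregulartoquotient} to write $f$ locally in $\Sto(y,0,r)$ as $f(x)=\Delta_y^m(x)\,g(x)$ (for $y\notin\rr$) or $f(x)=(x-y)^n h(x)$ (for $y\in\rr$), where $g$ (resp.\ $h$) is slice regular and does not vanish identically on $\s_y$ (resp.\ $h(y)\neq 0$). Since $\Delta_y$ is slice preserving and central in the relevant nucleus, the conjugate and normal functions factor cleanly: $N(f)=\Delta_y^{2m}\,N(g)$ and $f^c=\Delta_y^m\,g^c$. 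Because $f$ is tame, $g$ is tame near $y$ as well, so by Proposition~\ref{reciprocal} one gets $f^{-\punto}(x)=\Delta_y^{-m}(x)\,g^{-\punto}(x)$, where $g^{-\punto}=N(g)^{-\punto}\cdot g^c$ is slice regular away from the spherical zeros of $g$. The crucial point is that this exhibits $f^{-\punto}$ as $\Delta_y^{-m}$ times a function that is slice regular in a punctured neighborhood and has finite spherical order at $\s_y$; by the classification in Theorem~\ref{sphericalclassification} (and Theorem~\ref{sphericalclassificationreal} in the real case), $y$ is therefore a pole or removable singularity for $f^{-\punto}$.

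The main obstacle is handling the points of $V(N(f))\cap\OO'$, where $f$ itself is slice regular but its inverse acquires a singularity. Here the spherical order of $N(g)$ at $\s_y$ may be positive even though $g$ is regular, reflecting the fact that the spherical zero of $f$ becomes a pole of $f^{-\punto}$; one must verify that this spherical order is \emph{finite}, so that the singularity of $g^{-\punto}$ is a genuine pole rather than an essential singularity. This finiteness follows because $N(g)$ is a slice preserving slice regular function with $N(g)\not\equiv 0$ near $y$ (as $N(f)\not\equiv 0$), so by Proposition~\ref{reciprocal} its vanishing set is the circularization of a discrete set and $N(g)$ vanishes to finite order at $\s_y$; thus $N(g)^{-\punto}$ has finite spherical order and so does $g^{-\punto}$. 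The tameness of $f^{-\punto}$ throughout follows from the tameness clauses of Theorem~\ref{invertibles_f}, which give $(f^{-\punto})^c=(f^c)^{-\punto}$ and $N(f^{-\punto})=N(f)^{-\punto}$; since $N(f)$ is slice preserving and equals $N(f^c)$ on $\OO'$, its reciprocal inherits the same properties, and by continuity these persist at the erased singular points.

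Collecting these local descriptions, every point of $\OO\setminus\OO''$ is a pole or removable singularity for $f^{-\punto}$, so $f^{-\punto}$ is semiregular in $\OO$ by Definition~\ref{def:semiregular}; and it is tame in $\OO$ by Definition~\ref{def:tamesemiregular} since it is tame on $\OO''$. This completes the proof.
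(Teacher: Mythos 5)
Your proof is correct, and its central decomposition genuinely differs from the paper's. The paper applies Remark~\ref{rmk:semiregulartoquotient} not to $f$ but to the slice preserving semiregular function $N(f)$, writing $N(f)=\Delta_y^{\punto m}\cdot g$ near each singular sphere; tameness forces $g$ to be slice preserving, hence nowhere zero on $\s_y$, and the paper then reads off the singularity from $f^{-\punto}=\Delta_y^{-\punto m}\cdot g^{-\punto}\cdot f^c$, asserting that $g^{-\punto}\cdot f^c$ is slice regular near $\s_y$ and that the spherical order of $f^{-\punto}$ is at most $\max\{0,2m\}$. That is immediate at the points of $V(N(f))$ inside the regularity domain of $f$, where $f^c$ is indeed regular; but on a sphere of poles of $f$ the factor $f^c$ is itself singular, and the stated bound can even fail: over $\hh$ the tame function $f=\Delta_J^{-\punto}\cdot(x-J)^{\punto 2}$ has $N(f)\equiv 1$, so $m=0$ and $g\equiv 1$, yet $f^{-\punto}=f^c=\Delta_J^{-\punto}\cdot(x+J)^{\punto 2}$ has spherical order $2$ at $\s_J$; there the paper's argument needs the supplementary observation that $f^c$ has finite spherical order at $\s_y$, so that the product still does. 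Your factorization $f=\Delta_y^m\cdot g$, with $g$ regular and not identically zero on $\s_y$, avoids this defect: the entire pole of $f$, hence of $f^c=\Delta_y^m\cdot g^c$, is absorbed into the power of $\Delta_y$, so in $f^{-\punto}=\Delta_y^{-m}\cdot N(g)^{-\punto}\cdot g^c$ the factor $g^c$ is genuinely regular near $\s_y$ and the only residual singularity comes from $N(g)^{-\punto}$; both kinds of singular spheres (poles of $f$ and spherical zeros of $N(f)$) are then treated uniformly. The price is exactly the two verifications you identify and carry out: that $g$ inherits tameness from $f$ (a continuity argument across $\s_y$, of the same kind as the paper's remark following Definition~\ref{def:tamesemiregular}), and that the slice preserving function $N(g)$ vanishes to finite order along $\s_y$ (via Proposition~\ref{reciprocal}, plus the fact that a slice preserving function not identically zero on $\s_y$ is nowhere zero there), so that $g^{-\punto}$ has a pole rather than an essential singularity and Theorem~\ref{sphericalclassification} applies. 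Your derivation of the tameness of $f^{-\punto}$ from Theorem~\ref{invertibles_f} coincides with the paper's.
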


\begin{proof}
Let $\OO'$ be obtained from $\OO$ by erasing the singularities of $f$ that are not removable. According to Proposition~\ref{reciprocal},
\[f^{-\punto} = N(f)^{-\punto} \cdot f^c = N(f)^{-1} f^c\]
is slice regular in $\OO'':=\OO' \setminus V(N(f))$. If $\OO'=\OO_E$ then $V(N(f))$ is the circularization of a closed and discrete subset of $E$. As a consequence, every $y \in \OO\setminus\OO''$ is a singularity for $f^{-\punto}$. We now prove that it is a pole.

By Remark~\ref{rmk:semiregulartoquotient} and by the tameness of $f$, if $y \not \in \rr$ then there exist $m \in \zz$ and $r>0$ such that in $\Sto(y,r)$
\[N(f) = \Delta_y^{\punto m}\cdot g\]
for some $g\in\mc{SR}(\Sto(y,r))$ that does not vanish identically in $\s_y$ and is slice preserving. Hence, $g$ never vanishes in $\s_y$. The multiplicative inverse $g^{-\punto}$ is thus slice preserving and slice regular near $\s_y$. As a consequence, in a neighborhood of $\s_y$
\[f^{-\punto} = (g^{-\punto} \cdot \Delta_y^{-\punto m}) \cdot f^c = \Delta_y^{-\punto m} \cdot g^{-\punto} \cdot f^c\,,\]
where the function $g^{-\punto} \cdot f^c$ is slice regular. It follows that $f^{-\punto}$ has spherical order at $\s_y$ less than or equal to $\max\{0,2m\}$, which is the spherical multiplicity of $N(f)$ at $\s_y$. By Theorem~\ref{sphericalclassification}, $y$ is a pole for $f$.

Similarly, for any $y \in \rr$ it turns out that $\ord_{f^{-\punto}}(y)$ is less than or equal to the classical multiplicity of $N(f)$ at $y$, so that $y$ is a pole for $f$.

Let us now prove that $f^{-\punto}$ is tame. From Theorem~\ref{invertibles_f}, it follows that in $\OO''$
\[N(f^{-\punto})=N(f^c)^{-\punto}=N(f)^{-\punto}=N((f^{-\punto})^c),
\]
where $N(f)^{-\punto}$ is slice preserving because $N(f)$ is. We conclude that $f^{-\punto}$ is tame in $\OO''$, whence in $\OO$.
\end{proof}

We complete our study of the algebra of semiregular functions with a stronger result, which holds in the nonsingular case.

\begin{theorem}\label{thm:groupoftame}
Suppose $A$ is nonsingular and $\OO$ is a slice domain. Within the nonsingular $^*$-algebra of semiregular functions on $\OO$, let $T$ denote the set of all tame elements that are not identically $0$. Then $T$ is a multiplicative Moufang loop and it is preserved by $^*$-involution.
\end{theorem}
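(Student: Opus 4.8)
The plan is to realise $T$ as a set of invertible elements inside an alternative algebra that is closed under the three loop operations, and then to extract the loop axioms from alternativity together with the non-zero-divisor property of tame functions. Write $\mc{M}$ for the real alternative $^*$-algebra of semiregular functions on $\OO$ furnished by Theorem~\ref{thm:semiregular}; since $A$ is nonsingular and $\OO$ is a slice domain, $\mc{M}$ is nonsingular, so $N(h)\equiv 0$ forces $h\equiv 0$. The constant $1$ is tame and nonzero, so $1\in T$. Closure under $^*$-involution is immediate from Definition~\ref{def:tame}: if $f$ is tame then $N(f^c)=N(f)$ is slice preserving and equals $N((f^c)^c)$, and $f^c\not\equiv 0$ because $c$ is a linear bijection, so $f^c\in T$. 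Moreover every $f\in T$ is invertible: nonsingularity gives $N(f)\not\equiv 0$, and Theorem~\ref{thm:quotienttosemiregular} then produces a tame semiregular inverse $f^{-\punto}$, so $T$ is also closed under $f\mapsto f^{-\punto}$.

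Two further facts about tame elements need to be established, both by descending to the slice-regular locus. Given finitely many elements of $T$, let $\OO''$ be the circular open set obtained by deleting from $\OO$ their (locally finite) spheres of poles; since $\OO$ is a slice domain, each slice $\OO''_J$ is a connected domain with a discrete set removed, hence still connected, so $\OO''$ is again a slice domain on which those elements are tame and slice regular. First, each $f\in T$ is \emph{not a zero divisor} in $\mc{M}$: if $f\cdot h\equiv 0$ then $h$ vanishes on $\OO''$ by Proposition~\ref{tamenotzerodivisor}, hence on all of $\OO$ by uniqueness of semiregular extension. Second, $T$ is closed under the slice product: for $f,g\in T$, Proposition~\ref{tameregularproduct} gives $N(f\cdot g)=N(f)N(g)=N(g)N(f)$ and the tameness of $f\cdot g$ on $\OO''$; as these are identities between elements of $\mc{M}$ agreeing on the dense set $\OO''$, they persist on $\OO$, so $N(f\cdot g)$ is slice preserving and equals $N((f\cdot g)^c)$, whence $f\cdot g$ is tame, while $f\cdot g\not\equiv 0$ because $f$ is not a zero divisor. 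Hence $f\cdot g\in T$.

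It remains to derive the loop axioms. Being alternative, $\mc{M}$ satisfies the Moufang identities, which therefore hold for elements of $T$. Using that $f$ and $f^{-\punto}$ generate an associative subalgebra by Artin's theorem~\cite[Theorem 3.1]{schafer}, so that $f f^{-\punto} f=f$ unambiguously, the left Moufang identity yields $f\cdot\big(f^{-\punto}\cdot(f\cdot h)\big)=f\cdot h$ for every $h\in T$; the non-zero-divisor property of $f$ then forces $f^{-\punto}\cdot(f\cdot h)=h$. Applying the same argument in the (also alternative) opposite algebra gives the mirror identity $(h\cdot f)\cdot f^{-\punto}=h$. These inverse identities, together with closure under $\cdot$ and under $f\mapsto f^{-\punto}$, show that $f\cdot x=h$ and $x\cdot f=h$ have the unique solutions $x=f^{-\punto}\cdot h$ and $x=h\cdot f^{-\punto}$ in $T$; thus $T$ is a loop, and the Moufang identities upgrade it to a Moufang loop, which we have already seen to be preserved by $^*$-involution.

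The main obstacle I anticipate lies in this last step. Over a general algebra $A$ the trace $t(f)=f+f^c$ of a tame function need not be central, so one cannot obtain the inverse property by the naive octonionic computation $f^c\cdot(f\cdot h)=(f^c\cdot f)\cdot h=N(f)\,h$: the associator $(f^c,f,h)$ genuinely fails to vanish. The argument must instead route through the Moufang identities and the cancellation furnished by Proposition~\ref{tamenotzerodivisor}, which is why the non-zero-divisor property is singled out as a preliminary. A secondary, more routine, difficulty is the transfer of the multiplicativity of $N$ and of tameness from the slice regular setting of Proposition~\ref{tameregularproduct} to semiregular functions; this rests on verifying that the slice-regular locus is again a slice domain and on the uniqueness of meromorphic extension.
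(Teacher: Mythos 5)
Your proposal is correct and follows essentially the same route as the paper: closure of $T$ under $^*$-involution, closure under the slice product via Propositions~\ref{tameregularproduct} and~\ref{tamenotzerodivisor} after descending to the slice regular locus (which remains a slice domain), and closure under inversion via Theorem~\ref{thm:quotienttosemiregular} together with the nonsingularity provided by Theorem~\ref{thm:semiregular}. The only difference is that you explicitly derive the loop axioms (inverse property via the Moufang identities plus cancellation by non-zero-divisors), a step the paper leaves implicit by appealing to the standard fact that invertible elements of an alternative algebra, closed under product and inversion, form a Moufang loop; your spelled-out version is a sound completion of that step, not a different argument.
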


\begin{proof}
$T$ is preserved by $^*$-involution because tameness is preserved by it and because $f\equiv0$ is equivalent to $f^c\equiv0$. We will now prove that $T$ is closed under multiplication and then prove that every element of $T$ admits a multiplicative inverse in $T$.

Let $f,g \in T$. We know from Theorem~\ref{thm:semiregular} that $f\cdot g$ is semiregular in $\OO$ and we need to prove that it does not vanish identically and that it is tame. Since $\OO$ is a slice domain, so is the subset $\OO'$ obtained from it by erasing the poles that are not removable for $f$ or for $g$. Since $A$ is nonsingular and $f,g \in \mc{SR}(\OO')$, by Proposition~\ref{SRnonsingular}, $N(f)\not \equiv 0\not \equiv N(g)$ in $\OO'$. According to Proposition~\ref{tamenotzerodivisor}, $f\cdot g$ is not identically $0$ in $\OO'$, thus in $\OO$. Moreover, by Proposition~\ref{tameregularproduct}, $f\cdot g$ is tame in $\OO'$, whence in $\OO$.

Let us now consider $f^{-\punto}$, which is semiregular and tame in $\OO$ by Theorem~\ref{thm:quotienttosemiregular}. Now, $f^{-\punto}$ cannot vanish identically in $\OO$ or $f\cdot f^{-\punto} \equiv 1$ would be contradicted. We conclude that $f^{-\punto}\in T$, as desired.
\end{proof}



\end{document}